\theoremstyle{definition}
\newtheorem{lemma}{Lemma}
\newtheorem{proposition}{Proposition}
\newtheorem{definition}{Definition}
\newtheorem{corollary}{Corollary}
\newtheorem{example}{Example}
\newtheorem{theorem}{Theorem}
\newcommand{\CC}{\mathrm{C}}
\newcommand{\K}{\mathcal{K}}
\newcommand{\N}{\mathbb{N}}
\newcommand{\Z}{\mathbb{Z}}
\newcommand{\R}{\mathbb{R}}
\newcommand{\C}{\mathbb{C}}
\newcommand{\Cu}{\mathrm{Cu}}
\newcommand{\Aut}{\mathrm{Aut}}
\newcommand{\id}{\mathrm{id}}
\newcommand{\Ad}{\mathrm{Ad}}
\newcommand{\Id}{\mathrm{Id}}
\newcommand{\M}{\mathrm{M}}
\newcommand{\dist}{\mathrm{dist}}
\newcommand{\IIm}{\mathrm{Im}}
\newcommand{\GL}{\mathrm{GL}}
\newcommand{\F}{\mathrm{F}}
\newcommand{\Ann}{\mathrm{Ann}}
\title[Crossed product by actions with the Rokhlin property]{Crossed product by actions of finite groups with the Rokhlin Property}
\author{Luis Santiago}\email{lsantiag@fields.toronto.edu}
\date{\today}
\address{Fields Institute for Research in Mathematical Sciences, 222 College St, Toronto, ON M5T 3J1, Canada}
\begin{document}
\maketitle

\begin{abstract}
We introduce and study a Rokhlin-type property for actions of finite groups on (not necessarily unital) C*-algebras. We show that the corresponding crossed product C*-algebras can be locally approximated by C*-algebras that are stably isomorphic to closed two-sided ideals of the given algebra. We then use this result to prove that several classes of C*-algebras are closed under crossed products by finite group actions with this Rokhlin property.
\end{abstract}

\tableofcontents

\section{Introduction}
The Rokhlin property is a form of freeness for group actions on noncommutative C*-algebras. For finite group actions on unital C*-algebras this property first appeared (under different names) in \cite{Kishimoto} and \cite{Fack-Marechal} for actions of $\mathbb{Z}_p$ on UHF-algebras, and in \cite{Herman-Jones} for actions of finite groups on arbitrary unital C*-algebras. 
Let us recall the definition of the Rokhlin property:
\begin{definition}(\cite[Definition 3.1]{Izumi-I})\label{def: Rokhlin unital}
Let $A$ be a unital C*-algebra and let $\alpha\colon G\to \Aut(A)$ be an action of a finite group $G$ on $A$. The action $\alpha$ is said to have the \emph{Rokhlin property} if there exists a partition of unity $(p_g)_{g\in G}\subset A_\infty$ (see Subsection \ref{central sequence algebra} for the definition of $A_\infty$) consisting of projections such that
$$\alpha_g(p_h)=p_{gh},$$
for all $g,h\in G$.
\end{definition}
When $A$ is separable this property can be reformulated in terms of finite subsets and projections of $A$ (\cite[Definition 1.1]{Phillips-TracialR}). The Rokhlin property is quite restrictive and is not useful if there is no abundance of projections. For instance, if either the $\mathrm{K}_0$-group or the $\mathrm K_1$-group of the algebra is isomorphic to $\Z$ then the algebra does not have any action of a finite group with the Rokhlin porperty (this follows by \cite[Theorem 3.13]{Izumi-I}). On the contrast, if a C*-algebra absorbs tensorially the UHF-algebra of infinite type $\M_{n^\infty}$ and $G$ is a finite group of order $n$ then the algebra has an action of $G$ with the Rokhlin property. In fact, in this case the actions of $G$ with the Rokhlin property are generic (\cite[Theorem 5.17]{Phillips-generic}).

For C*-algebras that are not unital there are at least two Rokhlin-type properties. One of them is called the multiplier Rokhlin property (\cite[Definition 2.15]{Phillips-freeness}), and is defined using projections of the multiplier algebra of the given C*-algebra. The other one, which is simply called the Rokhlin property (\cite[Definition 3.1]{Nawata}), is defined for $\sigma$-unital C*-algebras using projections in the central sequence algebra of the given C*-algebra. In this paper we introduce a Rokhlin-type property (Definition \ref{def: Rokhlin}) which somehow resembles \cite[Definition 1.1]{Phillips-TracialR}. It is defined using positive elements of the algebra instead of projections. This new Rokhlin-type property is essentially that of \cite[Definition 3.1]{Nawata} (they agree for separable C*-algebras), but it has the advantage that it can be defined for general C*-algebras (not just for $\sigma$-unital ones). Actions with the multiplier Rokhlin property or the Rokhlin property in the sense of \cite[Definition 3.1]{Nawata} also have this new Rokhlin property. Also, it can be seen that if the algebra is unital our definition agrees with the standard Rokhlin property for unital C*-algebras. 
In this paper we also study the crossed product C*-algebras obtained by actions of finite groups with the Rokhlin property on different classes of C*-algebras. Our main motivation comes from the study of crossed products by finite group actions on stably projectionless C*-algebras and our objective is to extend the results of \cite{Osaka-Phillips} to non-unital C*-algebras. The problem of classifying actions with the Rokhlin property will be considered in \cite{Gardella-Santiago}.

The paper is organized as follows. In Section 3 we define and study a Rokhlin-type property for actions of finite groups on arbitrary C*-algebras. Section 4 is dedicated to show that the crossed product C*-algebra by an action of a finite group on a C*-algebra with this Rokhlin property can be locally approximated by matrix algebras over hereditary C*-subalgebras of the given C*-algebra (Theorem \ref{thm: local approx}). This result may be considered as a nonunital version of \cite[Theorem 3.2]{Osaka-Phillips}. In Section 5 we use this approximation result to show that several classes of C*-algebras are closed under crossed products by finite group actions with the Rokhlin property (Theorem \ref{thm: permanence}). These classes of C*-algebras include purely infinite C*-algebras, C*-algebras of stable rank one, C*-algebras of real rank zero, C*-algebras of finite nuclear dimension, separable $\mathcal D$-stable C*-algebras, where $\mathcal D$ is a strongly self-absorbing C*-algebra, simple C*-algebras, simple C*-algebras with strict comparison of positive elements, simple stably projectionless C*-algebras, and C*-algebras that are stably isomorphic to sequential direct limits of one-dimensional NCCW-complexes.

I would like to thank Eusebio Gardella for useful discussions concerning the subject matter of this paper.

\section{Preliminary definitions and results}

\addtocontents{toc}{\protect\setcounter{tocdepth}{1}}
\subsection{Central sequence algebras}\label{central sequence algebra}

Let $A$ be a C*-algebra. Let us denote by $A^\infty$ the quotient of $\ell_\infty(\N, A)$ by the ideal $c_0(\N, A)$. That is, 
$$A^\infty:=\ell_\infty(\N, A)/c_0(\N, A).$$
The elements of $\ell_\infty(\N, A)$ will be written as sequences of elements of $A$. If $(a_n)_n\in \ell_\infty(\N, A)$ then its image in the quotient $A^\infty$ will be denoted by $[(a_n)_n]$. 

Let $B$ be a C*-subalgebra of $A$. Then we will identify $B$ with its image in $A^\infty$ by the embedding
$$a\in B\longmapsto (a,a,\cdots)\in A^\infty.$$
Similarly, we will identify $B^\infty\cap B'$ with the corresponding C*-subalgebra of $A^\infty \cap B'$. Let us denote by $A_\infty$ and $\Ann(B, A^\infty)$ the C*-algebras
$$A_\infty:=A^\infty\cup A', \quad \Ann(B, A^\infty):=\{a\in A^\infty\cap B': ab=0, \,\,\,\forall b\in B\}.$$
Then $\Ann(B, A^\infty)$ is a closed two-sided ideal of $A^\infty\cap B'$. Denote the corresponding quotient by $\F(B, A)$ and the quotient map by $\pi$. We call $\F(A):=\F(A,A)$ the {\it central sequence algebra} of $A$. Note that if $B$ is $\sigma$-unital then $\F(B, A)$ is unital.
Let $\alpha\colon G\to \Aut(A)$ be an action of a finite group $G$ on the C*-algebra $A$ and let $B$ be a $G$-invariant subalgebra of $A$. Then $\alpha$ induces natural actions on $A^\infty$, $A_\infty$, $\F(A)$, and $\F(B,A)$. For simplicity we will denote all these actions again by $\alpha$.


\begin{lemma}\label{lem: aunit}
Let $A$ be a C*-algebra and let $\alpha\colon G\to \Aut(A)$ be an action of a finite group $G$ on $A$. Let $J$ be a $G$-invariant ideal. Then $J$ has a quasi-central approximate unit consisting of $G$-invariant positive contractions. 
\end{lemma}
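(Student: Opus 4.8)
The plan is to start from a standard quasi-central approximate unit and symmetrize it over the group. By Arveson's theorem on quasi-central approximate units, the ideal $J$ admits an approximate unit $(e_\lambda)_\lambda$ consisting of positive contractions that is quasi-central relative to $A$; that is, $\|e_\lambda a - a e_\lambda\|\to 0$ for every $a\in A$. The idea is then to average over $G$ by setting
\[
f_\lambda = \frac{1}{|G|}\sum_{g\in G}\alpha_g(e_\lambda).
\]
Since $J$ is $G$-invariant we have $\alpha_g(e_\lambda)\in J$, so $f_\lambda\in J$; each $f_\lambda$ is positive as a sum of positive elements, and since the $\alpha_g$ are isometric we get $\|f_\lambda\|\le 1$. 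A reindexing of the sum shows $\alpha_h(f_\lambda)=f_\lambda$ for all $h\in G$, so the $f_\lambda$ are $G$-invariant positive contractions in $J$.

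It then remains to check that $(f_\lambda)_\lambda$ is a quasi-central approximate unit for $J$. The main point, which I would isolate as a preliminary observation, is that for each fixed $g$ the translated net $(\alpha_g(e_\lambda))_\lambda$ inherits both properties from $(e_\lambda)_\lambda$. Indeed, writing an arbitrary $a\in A$ as $a=\alpha_g(b)$ with $b=\alpha_{g^{-1}}(a)$, the identity
\[
\alpha_g(e_\lambda)\,a - a\,\alpha_g(e_\lambda) = \alpha_g\!\left(e_\lambda b - b e_\lambda\right)
\]
together with the fact that $\alpha_g$ is isometric reduces the quasi-central estimate for $\alpha_g(e_\lambda)$ to that for $e_\lambda$; an analogous computation, using that $\alpha_g$ maps $J$ onto $J$, shows $\|\alpha_g(e_\lambda)a - a\|\to 0$ for $a\in J$. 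Summing over the finitely many $g\in G$ and dividing by $|G|$ then yields $\|f_\lambda a - a f_\lambda\|\to 0$ for all $a\in A$ and $\|f_\lambda a - a\|\to 0$ for all $a\in J$, which completes the argument.

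Because $G$ is finite, the averaging interacts well with the norm estimates, so I do not expect a serious obstacle; the content is essentially that symmetrization preserves each of the three desired properties. The only point requiring a little care is that the symmetrized net need no longer be monotone, but this is harmless here, since quasi-centrality and the approximate-unit identity are the only properties actually used in the sequel, and a genuinely increasing net can be recovered by a standard reindexing if it is ever needed.
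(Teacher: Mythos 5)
Your proof is correct, but it composes the same two ingredients (group averaging and the Arveson--Pedersen quasi-central approximate unit theorem) in the opposite order from the paper. The paper first averages an arbitrary approximate unit $(u_i)$ of $J$ over $G$ to obtain a $G$-invariant approximate unit, and then invokes \cite[Theorem 3.12.14]{PedersenBook} in its convex-hull form: the quasi-central approximate unit that theorem produces lies in the convex hull of the given one, and convex combinations of $G$-invariant elements are again $G$-invariant, so quasi-centrality is obtained without any further estimate. You instead quasi-centralize first and average second, which obliges you to check that quasi-centrality survives the averaging; your verification via the identity $\alpha_g(e_\lambda)a - a\,\alpha_g(e_\lambda) = \alpha_g\bigl(e_\lambda \alpha_{g^{-1}}(a) - \alpha_{g^{-1}}(a)\,e_\lambda\bigr)$ and the isometry of $\alpha_g$ is exactly right, and is in fact the same translation trick the paper uses to check the approximate-unit property of its averaged net. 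What your route buys is that you only need the bare existence of a quasi-central approximate unit, not the refinement that one can be found inside the convex hull of a prescribed approximate unit; what the paper's route buys is that no quasi-centrality estimate needs to be redone after symmetrization. One small remark: your closing worry about monotonicity is unnecessary, since each $\alpha_g$ is an order isomorphism, so if $(e_\lambda)_\lambda$ is increasing then so is $f_\lambda = \tfrac{1}{|G|}\sum_{g\in G}\alpha_g(e_\lambda)$; the paper records exactly this observation for its averaged net.
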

\begin{proof}
By \cite[Theorem 3.12.14]{Pedersen Book} it is enough to show that $J$ has a $G$-invariant approximate unit. Let $(u_i)_{i\in I}$ be an approximate unit for $J$. For each $i\in I$ put $v_i=\frac{1}{|G|}\sum_{g\in G}\alpha_g(u_i)$. Then it is clear that $\|v_i\|\le 1$ for all $i\in I$, $v_i\le v_j$ for all $i,j\in I$ with $i\le j$, and $\alpha_g(v_i)=v_i$ for all $i\in I$ and $g\in G$. Let $a\in J$ and let $\epsilon>0$. Then there exists $i_0$ such that 
$$\|u_i\alpha_{g^{-1}}(a)-\alpha_{g^{-1}}(a)\|<\epsilon,$$
for all $i\ge i_0$ and $g\in G$. Now using that *-automorphisms are norm preserving we get
$$\|\alpha_g(u_i)a-a\|=\|\alpha_g(u_i\alpha_{g^{-1}}(a)-\alpha_{g^{-1}}(a))\|=\|u_i\alpha_{g^{-1}}(a)-\alpha_{g^{-1}}(a)\|<\epsilon,$$ 
for all $i\ge i_0$ and $g\in G$. Hence, by the triangle inequality
$$\|v_i a-a\|\le\frac{1}{|G|} \sum_{g\in G}\|\alpha_g(u_i)a-a\|<\frac{1}{|G|}(|G|\epsilon)=\epsilon,$$
for all $i\ge i_0$.
This shows that $(v_i)_{i\in I}$ is an approximate unit for $J$.
\end{proof}

\begin{lemma}\label{lem: c}
Let $A$ be a C*-algebra, let $\alpha\colon G\to A$ be an action of a finite group $G$ on $A$, and let $B$ be a $\sigma$-unital C*-subalgebra of $A$. Let $F_1$ be a finite subset of $A^\infty\cap B'$, let $F_2$ be a finite subset of $\Ann(B, A^\infty)$, and let $F_3$ be a finite subset of
$$\{a\in A^\infty: ab=b\,\,\,\text{for all}\,\,\, b\in B\}.$$
Then there exists a positive contraction $d$ in the fixed point algebra $(B^\infty\cap B')^G$ (considered as a subalgebra of $(A^\infty\cap B')^G$) such that 
\begin{align*}
&db=b, \qquad\forall b\in B,\\
&da=ad, \qquad\forall a\in F_1,\\
&da=0, \qquad\forall a\in F_2,\\
&da=a, \qquad\forall a\in F_3.
\end{align*}
\end{lemma}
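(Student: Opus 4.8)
The plan is to produce $d$ as a suitably reindexed, $G$-invariant quasicentral approximate unit for $B$, living inside $B^\infty\cap B'$. First observe that the requirement $db=b$ for all $b\in B$ is equivalent to asking that a representative $(d_n)_n$ of $d$ be an approximate unit for $B$; since the $d_n$ can be taken positive, such an approximate unit is two-sided, so $\|bd_n-d_nb\|\to0$ and $d$ automatically lies in $B'$, and because $B$ is $\sigma$-unital it suffices to test this single condition against one strictly positive $h\in B$. By Lemma \ref{lem: aunit} applied to $B$ we may fix a quasicentral approximate unit $(e_\lambda)_\lambda$ of $B$ consisting of $G$-invariant positive contractions; if every $d_n$ is chosen among the $e_\lambda$, then the resulting $d=[(d_n)_n]$ is a $G$-invariant positive contraction in $B^\infty\cap B'$, hence an element of $(B^\infty\cap B')^G$ as required.

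Next I would record what the remaining three families demand of $(d_n)_n$, writing $x^{(i)}=[(x^{(i)}_n)_n]$, $y^{(j)}=[(y^{(j)}_n)_n]$, $z^{(k)}=[(z^{(k)}_n)_n]$ for the finitely many elements of $F_1$, $F_2$, $F_3$. The relation $da=ad$ on $F_1$ amounts to $\|[d_n,x^{(i)}_n]\|\to0$; the relation $da=0$ on $F_2$ amounts to $\|d_n y^{(j)}_n\|\to0$; and $da=a$ on $F_3$ amounts to $\|d_n z^{(k)}_n-z^{(k)}_n\|\to0$. The structural inputs are of two opposite types. On the one hand, for each fixed index $\lambda$ one has $\|[e_\lambda,x^{(i)}_n]\|\to0$ and $\|e_\lambda y^{(j)}_n\|\to0$ as $n\to\infty$, because $e_\lambda\in B$ while $x^{(i)}\in B'$ and $y^{(j)}\in\Ann(B,A^\infty)$. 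On the other hand, for each fixed $n$ one has $\|e_\lambda h-h\|\to0$ and, using that each $z^{(k)}$ acts as a unit on $B$, $\|e_\lambda z^{(k)}_n-z^{(k)}_n\|\to0$ as $\lambda\to\infty$. Thus the commutator and annihilation conditions favour keeping the approximate-unit index fixed while $n$ grows, whereas the two ``unit'' conditions favour driving the index to infinity, and the whole problem is to reconcile these competing demands within a single reindexing $n\mapsto\lambda(n)$.

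The main obstacle is exactly this reconciliation, which I would carry out by a diagonal argument of Kirchberg $\epsilon$-test type. Concretely, for each $\epsilon>0$ I would first pick, for every $n$, an index $\lambda(n)$ large enough that $e_{\lambda(n)}$ fixes $h$ and each $z^{(k)}_n$ to within $\epsilon$; the quasicentrality of $(e_\lambda)$ is what is meant to keep $\|[e_{\lambda(n)},x^{(i)}_n]\|$ and $\|e_{\lambda(n)}y^{(j)}_n\|$ small along this choice even though $\lambda(n)\to\infty$, so that all four quantities above have limit superior at most $\epsilon$. Feeding the sets $X_n:=\{G\text{-invariant positive contractions of }B\}$ together with the four functions $(d_n)_n\mapsto\limsup_n(\cdots)$ into the $\epsilon$-test, and letting $\epsilon=1/m\to0$, would then upgrade these $\epsilon$-approximate relations to a single representative $(d_n)_n$ for which all four limits vanish exactly; the associated $d$ satisfies $db=b$, $da=ad$ on $F_1$, $da=0$ on $F_2$, and $da=a$ on $F_3$. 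The step I expect to be delicate, and on which I would spend the most care, is verifying that the $G$-invariant quasicentral approximate unit can be chosen so that the commutator and annihilation estimates for $F_1$ and $F_2$ genuinely survive the reindexing $\lambda(n)\to\infty$ that the unit conditions on $h$ and $F_3$ force upon us.
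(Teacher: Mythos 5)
Your overall skeleton---a countable $G$-invariant approximate unit of $B$ (Lemma \ref{lem: aunit} plus $\sigma$-unitality), reindexed into an element $d\in(B^\infty\cap B')^G$---is the same as the paper's, and your first paragraph (reduction to a strictly positive $h\in B$, automatic two-sidedness, $G$-invariance) is fine. The argument breaks, however, at both of the claims on which your reconciliation of the ``competing demands'' rests. First, the assertion that for fixed $n$ one has $\|e_\lambda z^{(k)}_n-z^{(k)}_n\|\to 0$ as $\lambda\to\infty$ is false: an approximate unit of $B$ does not act as a unit on arbitrary elements of $A$, and the hypothesis ``$z^{(k)}b=b$ for all $b\in B$'' only gives $\lim_n\|z^{(k)}_n e_\lambda-e_\lambda\|=0$ at each \emph{fixed} $\lambda$. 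In fact the conclusion $da=a$ on $F_3$, as literally stated in the lemma, is false in general: take $A=\C\oplus\C$, $B=\C\oplus 0$, $G$ trivial, $a=(1,1)$; then $ab=b$ for all $b\in B$, yet every $d\in B^\infty\cap B'$ satisfies $da=d\neq a$. The provable conclusion---and the only one the paper ever uses, namely $(\sum_{g}f_g)d=d$ in the implication (iv)$\Rightarrow$(ii) of Proposition \ref{prop: Rokhlin equivalence} and in the hereditary-subalgebra part of the theorem in Section 3---is $da=ad=d$ for $a\in F_3$. (The paper's own display \eqref{eq: d} asserts the same unjustifiable inequality $\|u_jc^{(i)}_n-c^{(i)}_n\|<\frac 1j$; the correct version is $\|u_jc^{(i)}_n-u_j\|<\frac 1j$, which follows from $u_jc^{(i)}=u_j$ in $A^\infty$.) Second, the appeal to quasicentrality to keep $\|[e_{\lambda(n)},x^{(i)}_n]\|$ and $\|e_{\lambda(n)}y^{(j)}_n\|$ small as $\lambda(n)\to\infty$ is unfounded: Lemma \ref{lem: aunit} is applied to $B$ as an ideal of \emph{itself}, so it yields quasicentrality only relative to $B$; since $B$ is merely a C*-subalgebra of $A$, no approximate unit of $B$ need be quasicentral in $A$ (take $B=\C e_{11}\subset\M_2(\C)=A$, where $\|[e_{11},e_{12}+e_{21}]\|=1$), and at fixed $n$ the entries $x^{(i)}_n$, $y^{(j)}_n$ are arbitrary elements of $A$ with no relation to $B$ whatsoever.

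The deeper point is that the tension you devote your last paragraph to is an artifact of the false $F_3$ target, and once it is removed no quasicentrality in $A$ is needed. After correcting the target to $da=d$, \emph{all} of the estimates involving the sequences $x^{(i)}_n$, $y^{(j)}_n$, $z^{(k)}_n$ hold at each fixed $u_j$ for all sufficiently large $n$ (exactly the direction you correctly recorded for $F_1$ and $F_2$, since the memberships in $B'$, $\Ann(B,A^\infty)$, and the unit relation are exact identities in $A^\infty$ against each fixed element of $B$); the only condition that forces the index to infinity at all is $\|d_nh-h\|\to 0$, and it imposes no rate. This is precisely how the paper proceeds: for each $j$ choose $k_j$ so that the three estimates hold for $n\ge k_j$, then set $d_n:=u_j$ on the block $k_{j+1}\le n<k_{j+2}$. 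If you prefer your $\epsilon$-test formulation, the correct input for each $\epsilon$ is the \emph{constant} sequence $d_n\equiv u_j$ with $j$ so large that $\|u_jh-h\|<\epsilon$; all the remaining test quantities then have limit superior zero. With the target corrected and the reindexing run in this direction, your outline becomes the paper's proof.
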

\begin{proof}
Let $(u_n)_{n\in \N}\subset B^G$ be a countable approximate unit for $B$ (this approximate unit exists by the previous lemma). Let $F_1$, $F_2$, and $F_3$ be as in the statement of the lemma. Without loss of generality we may assume that their cardinalities is the same and that their elements are positive. Write
\begin{align*}
&F_1=\{a^{(i)}=[(a_n^{(i)})_{n}]: 1\le i\le m\},\\
&F_2=\{b^{(i)}=[(b_n^{(i)})_{n}]: 1\le i\le m\},\\
&F_3=\{c^{(i)}=[(c_n^{(i)})_{n}]: 1\le i\le m\}.
\end{align*}
Then for each $j\in \N$ there exists $k_j\in \N$ such that 
\begin{align}\label{eq: d}
\|u_ja_n^{(i)}-a_n^{(i)}u_j\|<\frac 1 j,\qquad \|u_jb_n^{(i)}\|<\frac 1 j, \qquad \|u_jc_n^{(i)}-c_n^{(i)}\|<\frac 1 j,
\end{align}
for $n\ge k_j$ and $1\le i\le m$. Moreover, we may choose the sequence $(k_j)_{j\in \N}$ satisfying $k_j<k_{j+1}$ for all $j\in \N$. Define $d=[(d_n)_{n}]\in B^\infty\subseteq A^\infty$ by
\begin{align*}
d_n=
\begin{cases}
u_1 &\text{ for } 1\le n<k_2\\
u_j &\text{ for } k_{j+1}\le n<k_{j+2}. 
\end{cases}
\end{align*}
Then $d\in B^\infty\cap B'$ and $db=b$ for all $b\in B$, since $(u_n)_{n\in \N}$ is an approximate unit for $B$.  In addition, by the inequalities in \eqref{eq: d} we have
\begin{align*}
da^{(i)}=a^{(i)},\qquad db^{(i)}=0, \qquad dc^{(i)}=c^{(i)},
\end{align*}
for $1\le i\le m$. This concludes the proof of the lemma.
\end{proof}

\subsection{The Cuntz semigroup}
Let us briefly recall the definition of the Cuntz semigroup of a C*-algebra. Let $A$ be a C*-algebra and let $a, b\in A$ be positive elements. We say that $a$ is \emph{Cuntz subequivalent} to $b$, and denote this by $a\precsim b$, if there exists a sequence $(d_n)_{n\in \N}\subset A$ such that $d_n^*bd_n\to a$ as $n\to \infty$. We say that $a$ is \emph{Cuntz equivalent} to $b$, and denote this by $a\sim b$, if $a\precsim b$ and $b\precsim a$. It is easy to see that $\precsim$ is a preorder relation on $A_+$ and that $\sim$ is an equivalence relation. We denote the Cuntz equivalence class of an element $a\in A_+$ by $[a]$. The \emph{Cuntz semigroup} of $A$, denoted by $\Cu(A)$, is the set of Cuntz equivalence classes of positive elements of $A\otimes\K$, where $\K$ denotes the algebra of compact operators on a separable Hilbert space. The addition on $\Cu(A)$ is given by $[a]+[b]:=[a'+b']$, where $a', b'\in (A\otimes \K)_+$ are such that $a\sim a'$, $b\sim b'$, and $a'b'=0$. The preorder relation $\precsim$  induces an order relation on $\Cu(A)$; that is, $[a]\le [b]$ if $a\precsim b$. We say that $[a]$ is \emph{compactly contained} in $[b]$, and denote this by $[a]\ll [b]$, if whenever $[b]\le \sup_n[b_n]$, for some increasing sequence $([b_n])_{n\in \N}$, one has $[a]\le [b_k]$ for some $k$. A sequence $([a_n])_{n\in \N}$ is said to be rapidly increasing if $[a_n]\ll [a_{n+1}]$ for all $n$.  It was shown in \cite{Coward-Elliott-Ivanescu} that $\Cu(A)$ is closed under taking suprema of increasing sequences and that every element of $\Cu(A)$ is the supremum of a rapidly increasing sequence. In particular, we have
$$[(a-\epsilon)_+]\ll[a], \quad [a]=\sup_{\epsilon>0}[(a-\epsilon)_+],$$
for all $a\in (A\otimes\K)_+$ and all $\epsilon>0$.
Here $(a-\epsilon)_+$ denotes the element obtained by evaluating---using functional calculus---the continuous function $f(t)=\max(0, t-\epsilon)$, with $t\in (0,\infty)$, at the element $a$. 

The following lemma states two well known result regarding Cuntz subequivalence. The first statement was proved in \cite[Lemma 2.2]{Kirchberg-Rordam} in the case $\delta=0$ and in \cite[Lemma 1]{Robert-Santiago} for an arbitrary $\delta$. The second statement was shown in \cite[Lemma 2.3]{Kirchberg-Rordam}.

\begin{lemma}\label{lem: Cuntz subequivalence}
Let $A$ be a C*-algebra and let $a, b\in A_+$. The following statements hold:
\begin{itemize}
\item[(i)] If $\|a-b\|<\epsilon$, then $(a-(\epsilon+\delta))_+\precsim (b-\delta)_+$ for all $\delta>0$;
\item[(ii)] If $a\precsim b$ then for every $\epsilon>0$ there exists $\delta>0$ such that $(a-\epsilon)_+\precsim (b-\delta)_+$.
\end{itemize}
\end{lemma}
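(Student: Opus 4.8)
The plan is to prove the two statements in a chain: reduce (ii) to the $\delta=0$ case of (i), reduce the general-$\delta$ form of (i) to that same $\delta=0$ case, and reduce the $\delta=0$ case to a single domination lemma. Only the last step is genuinely noncommutative; everything else is bookkeeping. Throughout I will freely use the elementary functional-calculus identity $\bigl((x-s)_+-t\bigr)_+=(x-(s+t))_+$ and the consequent monotonicity $(x-s)_+\precsim(x-t)_+$ for $s\ge t\ge 0$, together with two standard facts: that $r\le s$ implies $r\precsim s$ for $r,s\in A_+$, and that $d^*sd\precsim s$ for every $s\in A_+$ and $d\in A$ (the latter because $d^*sd=(s^{1/2}d)^*(s^{1/2}d)\sim (s^{1/2}d)(s^{1/2}d)^*\le\|d\|^2 s$).

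\textbf{Key lemma (the crux).} I first isolate: if $x=x^*$ and $y\ge 0$ satisfy $x\le y$, then $x_+\precsim y$. Writing $x=x_+-x_-$, one has $x_+\le y+x_-$ with $x_+x_-=0$. Fix $\epsilon>0$ and choose a continuous $f_\epsilon$ vanishing on $[0,\epsilon]$ with $tf_\epsilon(t)^2=(t-\epsilon)_+$. Since $x_+x_-=0$ and $f_\epsilon(0)=0$, one gets $f_\epsilon(x_+)x_-=0$, whence
\[
(x_+-\epsilon)_+=f_\epsilon(x_+)\,x_+\,f_\epsilon(x_+)\le f_\epsilon(x_+)\,(y+x_-)\,f_\epsilon(x_+)=f_\epsilon(x_+)\,y\,f_\epsilon(x_+)\precsim y.
\]
Thus $(x_+-\epsilon)_+\precsim y$ for all $\epsilon>0$, and since $[x_+]=\sup_{\epsilon>0}[(x_+-\epsilon)_+]$ and $\Cu(A)$ is closed under suprema of increasing sequences, letting $\epsilon\to0$ gives $x_+\precsim y$. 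This cutoff argument is precisely what replaces the (illegitimate) pointwise reasoning that the noncommutativity of $a$ and $b$ would otherwise require, and I expect establishing it to be the main obstacle.

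\textbf{Part (i).} For $\delta=0$, set $\eta=\|a-b\|<\epsilon$. Then $a-b\le\eta 1$ in the unitization, so $x:=a-\eta 1$ satisfies $x\le b$, and the key lemma yields $(a-\eta)_+=x_+\precsim b$; hence $(a-\epsilon)_+\precsim(a-\eta)_+\precsim b$. For general $\delta$, I would write $a\le(b-\delta)_++e$ with $e:=(a-b)_++\bigl(b-(b-\delta)_+\bigr)\in A_+$ and $\|e\|\le\|a-b\|+\delta<\epsilon+\delta$, and then apply the $\delta=0$ conclusion in the form ``$a\le c+e$ with $c,e\ge0$ implies $(a-\|e\|)_+\precsim c$'' — which is just the key lemma applied to $x=a-\|e\|1\le c$. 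This gives $(a-(\epsilon+\delta))_+\precsim(a-\|e\|)_+\precsim(b-\delta)_+$.

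\textbf{Part (ii).} Assuming $a\precsim b$, I would fix $\epsilon>0$, choose $d\in A$ with $\|a-d^*bd\|<\epsilon/2$, and then pick $\delta>0$ so small that $\|d\|^2\delta<\epsilon/2$. Since $\|b-(b-\delta)_+\|\le\delta$, this yields $\|a-d^*(b-\delta)_+d\|<\epsilon$, so the $\delta=0$ case of (i) applied to the pair $a$, $d^*(b-\delta)_+d$ gives $(a-\epsilon)_+\precsim d^*(b-\delta)_+d$. Finally the compression fact $d^*(b-\delta)_+d\precsim(b-\delta)_+$ together with transitivity gives $(a-\epsilon)_+\precsim(b-\delta)_+$, as required.
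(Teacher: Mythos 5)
Your proof is correct: the key lemma is sound (the functions $f_\epsilon$ exist and do what you claim, $f_\epsilon(x_+)x_-=0$ holds because $f_\epsilon$ vanishes at $0$, and passing to the supremum over $\epsilon$ is legitimate since $[x_+]=\sup_{\epsilon>0}[(x_+-\epsilon)_+]$), and the reductions of (i) with general $\delta$ and of (ii) to that lemma are all valid. The only point left tacit is that $x=a-\eta 1$ lives in the unitization rather than in $A$; this is harmless because $x_+=(a-\eta)_+$ and every Cuntz witness you produce (such as $f_t(x_+)$) lies in $A$ itself. Note, however, that the paper offers no proof of this lemma to compare against: it is stated as known, with (i) for $\delta=0$ attributed to \cite[Lemma 2.2]{Kirchberg-Rordam}, (i) for arbitrary $\delta$ to \cite[Lemma 1]{Robert-Santiago}, and (ii) to \cite[Lemma 2.3]{Kirchberg-Rordam}. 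In substance your argument reconstructs those cited proofs: your key lemma ($x=x^*\le y$ with $y\ge 0$ implies $x_+\precsim y$, via the cutoff functions) is precisely the engine behind Kirchberg--R{\o}rdam's Lemma 2.2, and your derivation of (ii) --- compress $(b-\delta)_+$ by the witness $d$ and invoke the $\delta=0$ case of (i) --- is how their Lemma 2.3 is proved. What your write-up buys is self-containedness from the basic facts about $\precsim$ recalled in the paper's preliminaries. What the citations buy the paper is more than brevity: the Robert--Santiago results are quantitative strengthenings, producing explicit elements that implement the subequivalence with norm control, and the paper genuinely needs that stronger form elsewhere (via \cite[Proposition 1]{Robert-Santiago} in Lemma \ref{lem: MvN}, feeding into Proposition \ref{prop: lifting}), where the purely qualitative statement you prove would not suffice.
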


We say that $\Cu(A)$ is \emph{almost unperforated} if for any $[a], [b]\in \Cu(A)$ satisfying $(n+1)[a]\le n[b]$, for some $n\in \N$, one has $[a]\le [b]$. Consider the set $\mathrm{L}(A)$ of all functionals $\lambda\colon \Cu(A)\to [0, \infty]$ that are additive, order-preserving, and that preserve suprema of increasing sequences. We say that a simple C*-algebra $A$ has \emph{strict comparison of positive elements} if for $[a], [b]\in \Cu(A)$, $\lambda([a])<\lambda([b])$ for all $\lambda\in \mathrm L(A)$ satisfying $\lambda([b])<\infty$, implies $[a]\le [b]$. The following is \cite[Lemma 6.1]{Tikuisis-Toms}.

\begin{lemma}\label{lem: strict-almost}
Let $A$ be a simple C*-algebra. Then $\Cu(A)$ is almost unperforated if and only if $A$ has strict comparison of positive elements.
\end{lemma}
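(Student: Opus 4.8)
The plan is to prove the two implications separately, keeping in mind that simplicity of $A$ enters through the fact that every \emph{nonzero} functional in $\mathrm{L}(A)$ is faithful, i.e.\ $\lambda([b])>0$ whenever $[b]\neq 0$. (The zero functional must be excluded from the quantifier defining strict comparison, as otherwise the condition would be vacuous; I read the definition accordingly.)

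For the implication strict comparison $\Rightarrow$ almost unperforation, I would argue directly. Suppose $(n+1)[a]\le n[b]$ for some $n\in\N$. If $[b]=0$ then $[a]=0\le[b]$, so assume $[b]\neq 0$. Let $\lambda\in\mathrm{L}(A)$ be a nonzero functional with $\lambda([b])<\infty$; by faithfulness $\lambda([b])>0$. Applying $\lambda$ to the given inequality yields $(n+1)\lambda([a])\le n\lambda([b])$, whence $\lambda([a])\le \tfrac{n}{n+1}\lambda([b])<\lambda([b])$. Thus the hypothesis of strict comparison is met, and we conclude $[a]\le[b]$. This direction is routine.

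The substantial direction is almost unperforation $\Rightarrow$ strict comparison. Assume $\Cu(A)$ is almost unperforated and that $\lambda([a])<\lambda([b])$ for every nonzero $\lambda\in\mathrm{L}(A)$ with $\lambda([b])<\infty$; I want $[a]\le[b]$. Since $[a]=\sup_{\epsilon>0}[(a-\epsilon)_+]$ and $\Cu(A)$ is closed under suprema of increasing sequences, it suffices to show $[(a-\epsilon)_+]\le[b]$ for each $\epsilon>0$. Fix $\epsilon$ and set $a'=(a-\epsilon)_+$, so that $[a']\ll[a]$. I would restrict attention to the normalized set $S=\{\lambda\in\mathrm{L}(A):\lambda([b])=1\}$ and, using the compactness of $S$ together with the compact containment $[a']\ll[a]$ (which makes the rank function $\lambda\mapsto\lambda([a'])$ well-behaved and strictly below $1$ on $S$, as $\lambda([a'])\le\lambda([a])<1$ there), upgrade the pointwise strict inequalities to a uniform gap $\sup_{\lambda\in S}\lambda([a'])\le 1-\delta<1$. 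Choosing $n$ with $(n+1)(1-\delta)\le n$ then gives $(n+1)\lambda([a'])\le n\lambda([b])$ for all $\lambda\in\mathrm{L}(A)$ (the cases $\lambda([b])\in\{0,\infty\}$ being trivial, with faithfulness handling $\lambda([b])=0$). Transferring this uniform functional inequality back to the order of $\Cu(A)$ gives $(n+1)[a']\le n[b]$, and almost unperforation finally produces $[a']\le[b]$, as desired.

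The main obstacle is precisely the heart of this second direction: passing from the family of strict functional inequalities to the single algebraic relation $(n+1)[a']\le n[b]$ in $\Cu(A)$. This requires (i) the compactness of the normalized functional space $S$ and the semicontinuity properties of the rank function of the compactly contained element $[a']$, and (ii) the fact that, for simple C*-algebras, the order on $\Cu(A)$ is suitably detected by the functionals in $\mathrm{L}(A)$ once the rescaling slack $\tfrac{n}{n+1}$ is allowed. Both ingredients rest on the structure theory of lower semicontinuous functionals (dimension functions) on the Cuntz semigroup, and this is where essentially all the difficulty lies; everything else is bookkeeping with suprema and the elementary functional estimate of the first direction.
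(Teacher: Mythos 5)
Your easy direction and your reading of the definition are both fine: the zero functional must indeed be excluded (otherwise strict comparison is vacuous), and simplicity makes every nonzero $\lambda\in\mathrm{L}(A)$ faithful, which is exactly what the first implication needs. For calibration, note that the paper itself contains no proof of this lemma: it is quoted verbatim from \cite[Lemma 6.1]{Tikuisis-Toms}, where the argument is R{\o}rdam's classical one adapted to the (possibly non-unital) Cuntz semigroup. So your attempt has to stand on its own, and in the substantial direction it has a genuine gap.

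The gap is your ``transfer'' step: passing from $(n+1)\lambda([a'])\le n\lambda([b])$ for all $\lambda\in\mathrm{L}(A)$ to the algebraic relation $(n+1)[a']\le n[b]$. You present this as a known ``fact that the order on $\Cu(A)$ is suitably detected by the functionals in $\mathrm{L}(A)$ once the rescaling slack $\tfrac{n}{n+1}$ is allowed.'' No such fact is available, and as stated it is the lemma in disguise: your transfer step makes no use of almost unperforation, whereas what the structure theory actually provides (R{\o}rdam's extension of the Goodearl--Handelman separation theorem, which is what \cite{Tikuisis-Toms} adapts to $\Cu$) is weaker and differently quantified: $(m+1)x\le my$ holds for \emph{some} $m$ if and only if every \emph{algebraic} state on $\Cu(A)$ normalized at $y$ --- additive, order-preserving, $f(y)=1$, with no lower semicontinuity or supremum-preservation required --- satisfies $f(x)<1$. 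Quantifying only over $\mathrm{L}(A)$, as you do, is a strictly weaker hypothesis; precisely in simple algebras without strict comparison (Villadsen--Toms type examples) the obstruction to comparison is carried by states on $\Cu(A)$ that are not lower semicontinuous and hence invisible to $\mathrm{L}(A)$. Bridging this gap is the entire content of the proof, and it is done as follows: arguing by contradiction, if $(m+1)[a']\not\le m[b]$ for every $m$, the separation theorem yields an algebraic state $f$ with $f([b])=1$ and $f([a'])\ge 1$; one then regularizes it, $\bar f(z):=\sup\{f(z'):z'\ll z\}$, to obtain a functional $\bar f\in\mathrm{L}(A)$, and the compact containment $[a']\ll[a]$ gives $\bar f([a])\ge f([a'])\ge 1\ge \bar f([b])$, contradicting the assumed strict inequalities (since $\bar f\neq 0$ and $\bar f([b])<\infty$). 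Only then does almost unperforation enter, converting $(m+1)[a']\le m[b]$ into $[a']\le[b]$. Note in particular that $[a']\ll[a]$ is really needed in the regularization estimate, not in your compactness/uniform-gap step, which is dispensable: no fixed $n$ is required, since the separation theorem supplies \emph{some} $m$. (Your compactness claim for $S=\{\lambda\in\mathrm{L}(A):\lambda([b])=1\}$ is itself delicate, as evaluation at $[b]$ is only lower semicontinuous on the cone of functionals, but that is a secondary issue.)
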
  

\begin{lemma}\label{lem: almost quotient}
Let $A$ be a C*-algebra and let $I$ be a $\sigma$-unital closed two-sided ideal of $A$. Suppose that $\Cu(A)$ is almost unperforated, then $\Cu(A/I)$ is almost unperforated. 
\end{lemma}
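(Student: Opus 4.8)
The plan is to transfer the hypothesis from $\Cu(A/I)$ to $\Cu(A)$, apply almost unperforation there, and then push the conclusion back down along the quotient map. Write $B=A\otimes\K$ and $J=I\otimes\K$, and let $\pi\colon B\to B/J\cong (A/I)\otimes\K$ be the quotient map, so that $\Cu(A/I)$ is computed in $(A/I)\otimes\K$ and $\Cu(\pi)\colon\Cu(A)\to\Cu(A/I)$ is an order-preserving additive map. Since $I$ is $\sigma$-unital so is $J$; fix a strictly positive element $e\in J_+$. Two facts about $[e]\in\Cu(I)$ will drive the argument. First, $[e]$ is the largest element of $\Cu(I)$: for any $c\in J_+$ one has $c\precsim e$, since $\overline{eJ}=J$ lets one write $c^{1/2}=\lim e x_n$ and hence $c=\lim x_n^*e^2x_n$. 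Second, $m[e]=[e]$ for every $m\ge 1$, because the isomorphism $\M_m(J)\cong J$ carries $\mathrm{diag}(e,\dots,e)$, a strictly positive element, to a strictly positive element of $J$.

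The heart of the matter, which I expect to be the main obstacle, is the following lifting statement: for $u,v\in B_+$, if $\pi(u)\precsim\pi(v)$ then $[u]\le[v]+[e]$ in $\Cu(A)$. To prove it I would fix $\eta>0$ and realize the subequivalence as an honest formula: choosing $d$ with $\|\pi(u)-d^*\pi(v)d\|$ small and invoking the exact (equality) version of Lemma~\ref{lem: Cuntz subequivalence}(i) from \cite{Kirchberg-Rordam}, one obtains an element $s$ with $(\pi(u)-\eta)_+=s\pi(v)s^*$. Lifting $s$ to $S\in B$, the element $r:=SvS^*-(u-\eta)_+$ lies in $J$ (because $\pi$ intertwines functional calculus), so $(u-\eta)_+=SvS^*-r\le SvS^*+r_-$ with $r_-\in J_+$. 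Hence
\[
(u-\eta)_+\precsim SvS^*+r_-\precsim (SvS^*)\oplus r_-\precsim v\oplus e,
\]
using that a sum of positive elements is subequivalent to their orthogonal sum and that $r_-\precsim e$. Taking the supremum over $\eta$ yields $[u]\le[v]+[e]$.

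With this in hand the remainder is a short absorption trick. Lift $[\bar a],[\bar b]\in\Cu(A/I)$ to positive elements $a,b\in B$ (positive elements lift along surjective $*$-homomorphisms). The hypothesis $(n+1)[\bar a]\le n[\bar b]$ reads $\pi((n+1)a)\precsim\pi(nb)$, so the lifting statement gives $(n+1)[a]\le n[b]+[e]$ in $\Cu(A)$. Adding $[e]$ and using $2[e]=[e]$ produces $(n+1)[a]+[e]\le n[b]+[e]$, which I rewrite via $[e]=(n+1)[e]=n[e]$ as
\[
(n+1)\big([a]+[e]\big)\le n\big([b]+[e]\big).
\]
Now almost unperforation of $\Cu(A)$ gives $[a]+[e]\le[b]+[e]$. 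Applying $\Cu(\pi)$ and noting $\Cu(\pi)([e])=[\pi(e)]=0$ since $e\in J=\ker\pi$, I conclude $[\bar a]\le[\bar b]$, which is precisely almost unperforation for $\Cu(A/I)$. The $\sigma$-unitality of $I$ enters only to supply the strictly positive $e$ with the idempotency $m[e]=[e]$ that makes the absorption step work.
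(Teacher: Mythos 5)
Your proof is correct, and structurally it is the same absorption argument as the paper's: lift the classes to $\Cu(A)$, add the largest element $z=[e]$ of $\Cu(I)$, use $mz=z$ to rewrite the hypothesis as $(n+1)([a]+z)\le n([b]+z)$, apply almost unperforation of $\Cu(A)$, and push back down with $\Cu(\pi)$, which annihilates $z$. The genuine difference lies in how the inequality $(n+1)[a]+z\le n[b]+z$ is obtained: the paper gets it in one line by citing \cite[Theorem 1.1]{C-R-S}, which describes $\Cu(A/I)$ as the quotient of $\Cu(A)$ by $\Cu(I)$, whereas you prove from scratch exactly the one implication needed --- that $\pi(u)\precsim\pi(v)$ forces $[u]\le[v]+[e]$ --- using the equality form of Lemma \ref{lem: Cuntz subequivalence}(i) from \cite[Lemma 2.2]{Kirchberg-Rordam} to write $(\pi(u)-\eta)_+=s\pi(v)s^*$, lifting $s$, observing that the error $r=SvS^*-(u-\eta)_+$ lies in $J$, and absorbing its negative part into $[e]$ via $a+b\precsim a\oplus b$. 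What this buys is self-containedness: your argument needs only \cite[Lemma 2.2]{Kirchberg-Rordam} and elementary Cuntz calculus, while the paper leans on the ideal-quotient machinery for $\Cu$ as a black box; the cost is that you rederive (one half of) a known theorem. One simplification available to you: the claim $m[e]=[e]$ needs no matrix-algebra identification, since $m[e]$ is the class of a positive element of $I\otimes\K$ and hence $m[e]\le[e]$ because $[e]$ is the largest element of $\Cu(I)$, while $[e]\le m[e]$ because $0$ is the least element of the Cuntz semigroup.
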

\begin{proof}
Let $\pi\colon A\to A/I$ denote the quotient map. Let $n\in \N$ and $x,y\in \Cu(A/I)$ be such that 
$$(n+1)x\le ny.$$ 
Choose $a,b\in (A\otimes \K)_+$ such that $[\pi(a)]=x$ and $[\pi(b)]=y$. Then it follows by \cite[Theorem 1.1]{C-R-S} that,
$$(n+1)[a]+z\le n[b]+z,$$
where $z$ is the largest element of $\Cu(I)$ (this element exists because $I$ is $\sigma$-unital). Since $2z=z$ we have
$$(n+1)([a]+z)\le n([b]+[z]).$$
Using now that $\Cu(A)$ is almost unperforated we get $[a]+z\le [b]+z$. Hence, 
$$x=\Cu(\pi)([a]+z)\le \Cu(\pi)([b]+z)=y.$$
This implies that $\Cu(A/I)$ is almost unperforated.
\end{proof}


\subsection{Strongly self-absorbing C*-algebras}
Recall that a unital nuclear C*-algebra $\mathcal{D}$ is said to be {\it strongly self-absorbing} if there exists a *-isomorphism $\phi\colon \mathcal D\to \mathcal D\otimes \mathcal D$ that is approximately unitarily equivalent to the map $i\colon \mathcal D\to \mathcal D\otimes \mathcal D$ given by $i(a)=a\otimes 1_{\mathcal{D}}$ for all $a\in \mathcal D$. A C*-algebra  $A$ is $\mathcal D$-stable if $A\otimes \mathcal D\cong A$. 

The following result was proved in \cite[Proposition 4.1]{Hishberg-Rordam-Winter} under the assumption that the algebra $\mathcal D$ is K$_1$-injective. This assumption is in fact redundant since every strongly self-absorbing C*-algebra is $\mathrm{K}_1$-injective (\cite[Theorem 3.1 and Remark 3.3]{WinterSSA}).
\begin{lemma}\label{lem: Dstability}
Let $A$ and $\mathcal D$ be separable C*-algebras such that $\mathcal D$ is strongly self-absorbing. Then $A$ is $\mathcal D$-stable if and only if for any $\epsilon>0$ and any finite subsets $F\subset A$ and $G\subset \mathcal D$ there exists a completely positive contractive map $\phi\colon \mathcal D\to A$ such that
\begin{itemize}
\item[(i)] $\|b\phi(1_{\mathcal D})-b\|<\epsilon$,
\item[(ii)] $\|b\phi(d)-\phi(d)b\|<\epsilon$,
\item[(iii)] $\|b(\phi(dd')-\phi(d)\phi(d'))\|<\epsilon$,
\end{itemize}
for all $b\in F$ and $d,d'\in G$.
\end{lemma}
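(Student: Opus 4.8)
The plan is to route both implications through the central sequence algebra $\F(A)$, by showing that conditions (i)--(iii), quantified over all $\epsilon>0$ and all finite $F\subset A$, $G\subset\mathcal D$, are a concrete reformulation of the existence of a \emph{unital} $*$-homomorphism $\Phi\colon\mathcal D\to\F(A)$, and then matching the latter with $\mathcal D$-stability. For the reformulation, suppose first that such maps $\phi$ exist. Fixing increasing finite sets $F_n\subset A$ and $G_n\subset\mathcal D$ with dense unions and c.p.c.\ maps $\phi_n\colon\mathcal D\to A$ realizing (i)--(iii) for $(F_n,G_n,1/n)$, the element $\Phi(d):=[(\phi_n(d))_n]$ defines a c.p.c.\ map $\mathcal D\to A^\infty$. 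Condition (ii) places $\Phi(\mathcal D)$ inside $A^\infty\cap A'$, condition (iii) gives $\Phi(dd')-\Phi(d)\Phi(d')\in\Ann(A,A^\infty)$, and condition (i) forces $\Phi(1_{\mathcal D})$ to act as the unit of $\F(A)$; hence $\pi\circ\Phi$ is a unital $*$-homomorphism into $\F(A)$. Conversely, given a unital $*$-homomorphism $\mathcal D\to\F(A)$, I would lift it to a sequence of c.p.c.\ maps $\phi_n\colon\mathcal D\to A$ by two applications of the Choi--Effros lifting theorem (valid since $\mathcal D$ is separable and nuclear) along the quotients $A^\infty\cap A'\to\F(A)$ and $\ell_\infty(\N,A)\to A^\infty$, using Lemma~\ref{lem: c} to control the unit; a diagonal argument then recovers (i)--(iii) for arbitrary data.

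It then remains to prove that $A$ is $\mathcal D$-stable if and only if there is a unital $*$-homomorphism $\mathcal D\to\F(A)$. For the forward direction I would use that a strongly self-absorbing $\mathcal D$ satisfies $\mathcal D\cong\mathcal D^{\otimes\infty}$, so $A\cong A\otimes\mathcal D$ upgrades to $A\cong A\otimes\mathcal D^{\otimes\infty}$. Fixing such an identification and a quasi-central approximate unit $(f_\lambda)$ of $A\otimes\mathcal D^{\otimes n}$ (Lemma~\ref{lem: aunit}), the maps $d\mapsto f_\lambda\otimes d$ placing $d$ in a fresh tensor factor are c.p.c., and as $n\to\infty$ and $\lambda$ increases they become asymptotically central, multiplicative, and unital on any prescribed finite set; assembling them gives the required unital $*$-homomorphism into $\F(A)$. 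The converse is the substantial direction: from a unital $*$-homomorphism $\mathcal D\to\F(A)$ one manufactures, using the approximately inner half-flip of $\mathcal D$ together with an Elliott-type one-sided approximate intertwining, an isomorphism $A\cong A\otimes\mathcal D$.

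The main obstacle is precisely this converse. The intertwining produces approximate morphisms $A\otimes\mathcal D\to A$ and must upgrade the ambient approximate unitary equivalences of embeddings into genuine continuous unitary paths in order to close up into an honest isomorphism; this is exactly where $\mathrm K_1$-injectivity of $\mathcal D$ enters, as in \cite[Proposition 4.1]{Hishberg-Rordam-Winter}. Since every strongly self-absorbing C*-algebra is $\mathrm K_1$-injective by \cite[Theorem 3.1 and Remark 3.3]{WinterSSA}, this hypothesis is automatic and the intertwining goes through for every such $\mathcal D$. I would therefore quote the intertwining machinery for this step and concentrate the original work on the reformulation and the forward tensor-factor construction described above.
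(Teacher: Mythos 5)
Your proposal is correct and follows essentially the same route as the paper: the paper gives no independent argument for this lemma but simply invokes \cite[Proposition 4.1]{Hishberg-Rordam-Winter} together with the automatic $\mathrm{K}_1$-injectivity of strongly self-absorbing C*-algebras from \cite[Theorem 3.1 and Remark 3.3]{WinterSSA}, and your sketch is a reconstruction of exactly that cited proof---reformulating (i)--(iii) as the existence of a unital $*$-homomorphism $\mathcal D\to\F(A)$ and then appealing to the approximately-inner-half-flip/intertwining machinery, with $\mathrm{K}_1$-injectivity supplied by Winter's theorem. Since the one step you do not carry out yourself (upgrading a unital embedding $\mathcal D\to\F(A)$ to an isomorphism $A\cong A\otimes\mathcal D$) is delegated to the very source the paper cites, the two arguments rest on identical inputs.
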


\section{The Rokhlin property}
In this section we introduce a Rokhlin-type property for actions of finite groups on not necessarily unital C*-algebras and study several permanence properties of it. We also study its relation with the Multiplier Rokhlin property (\cite[Definition 2.15]{Phillips-freeness}) and with the Rokhlin property as defined in \cite[Definition 3.1]{Nawata}.

\begin{definition}\label{def: Rokhlin}
Let $A$ be a C*-algebra and let $\alpha\colon G\to \Aut(A)$ be an action of a finite group $G$ on $A$. We say that $\alpha$ has the \emph{Rokhlin property} if for any $\epsilon>0$ and any finite subset $F\subset A$ there exist mutually orthogonal positive contractions $(r_g)_{g\in G}\subset A$ such that
\begin{itemize} 
\item[(i)] $\|\alpha_g(r_h)-r_{gh}\|<\epsilon$, for all $g,h\in G$;
\item[(ii)] $\|r_ga-ar_g\|<\epsilon$, for all $a\in F$ and $g\in G$;
\item[(iii)] $\|(\sum_{g\in G} r_g)a-a\|<\epsilon$, for all $a\in F$.
\end{itemize}
The elements $(r_g)_{g\in G}$ will be called {\it Rokhlin elements} for $\alpha$.
\end{definition}

\begin{proposition}\label{prop: Rokhlin equivalence}
Let $A$ be a C*-algebra and let $\alpha\colon G\to \Aut(A)$ be an action of a finite group $G$ on $A$. Then the following are equivalent:
\begin{itemize}
\item[(i)] $\alpha$ has the  Rokhlin property.
\item[(ii)] For any finite subset $F\subset A$  there exist mutually orthogonal positive contractions $(r_g)_{g\in G}\subset A^\infty\cap F'$ such that 
\begin{itemize}
\item[(a)] $\alpha_g(r_h)=r_{gh}$, for all $g,h\in G$;
\item[(b)] $(\sum_{g\in G} r_g)b=b$, for all $b\in F$.
\end{itemize}
\item[(iii)] For any separable C*-subalgebra $B\subseteq A$ there are orthogonal positive contractions $(r_g)_{g\in G}\subset A^\infty\cap B'$ such that
\begin{itemize}
\item[(a)] $\alpha_g(r_h)=r_{gh}$, for all $g,h\in G$;
\item[(b)] $(\sum_{g\in G} r_g)b=b$, for all $b\in B$.
\end{itemize}
\item[(iv)] For any separable $G$-invariant C*-subalgebra $B\subseteq A$ there are projections $(p_g)_{g\in G}\subset \F(B, A)$ such that
\begin{itemize}
\item[(a)] $\alpha_g(p_h)=p_{gh}$, for all $g,h\in G$;
\item[(b)] $\sum_{g\in G} p_g=1_{\F(B, A)}$.
\end{itemize}
\end{itemize}
\end{proposition}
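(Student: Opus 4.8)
$(i)\Rightarrow(ii)\Rightarrow(iii)\Rightarrow(iv)\Rightarrow(i)$, so that each step is as direct as possible. The conceptual content is a standard ``reindexing trick'': a sequence of approximate Rokhlin elements (with errors $\tfrac1n$) collapses, in the central-sequence algebra, to exact Rokhlin elements, and conversely exact elements in $A^\infty$ restrict to approximate ones in $A$.

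\medskip

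\emph{From $(i)$ to $(ii)$.} Given a finite $F\subset A$, I would apply Definition~\ref{def: Rokhlin} with $\epsilon=\tfrac1n$ (and with a fixed enumeration of $F$) to obtain, for each $n$, mutually orthogonal positive contractions $(r_g^{(n)})_{g\in G}\subset A$ satisfying (i)--(iii) up to $\tfrac1n$. Setting $r_g:=[(r_g^{(n)})_n]\in A^\infty$ yields mutually orthogonal positive contractions, since $\|r_g^{(n)}r_h^{(n)}\|\to 0$ for $g\neq h$ forces $r_gr_h=0$ in the quotient. Condition~(ii) of the definition gives $r_g\in A^\infty\cap F'$, condition~(i) gives $\alpha_g(r_h)=r_{gh}$ (the induced action on $A^\infty$ is applied coordinatewise), and condition~(iii) gives $(\sum_g r_g)b=b$ for $b\in F$.

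\medskip

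\emph{From $(ii)$ to $(iii)$.} Let $B\subseteq A$ be separable and choose an increasing sequence of finite sets $F_1\subseteq F_2\subseteq\cdots$ whose union is dense in $B$. For each $k$, statement $(ii)$ produces Rokhlin elements in $A^\infty\cap F_k'$. The obstacle here is to pass to the limit over $k$ while staying in $A^\infty$; I would use a diagonal/reindexing argument on representing sequences, choosing for each $k$ a single index $n_k$ (with $n_k\uparrow\infty$) so that the $k$-th family works to within $\tfrac1k$ against $F_k$, and then assemble the diagonal sequence. The three algebraic relations (orthogonality, the covariance relation $\alpha_g(r_h)=r_{gh}$, and $(\sum_g r_g)b=b$) pass to the limit because each error against a fixed $b\in B$ eventually goes to $0$; density of $\bigcup_k F_k$ upgrades $(\sum_g r_g)b=b$ from $\bigcup_k F_k$ to all of $B$, and commutation with $B$ follows similarly.

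\medskip

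\emph{From $(iii)$ to $(iv)$.} Here the separable subalgebra $B$ is assumed $G$-invariant, so I enlarge it to be $\sigma$-unital and apply $(iii)$ to get $(r_g)\subset A^\infty\cap B'$. The key step is to show the images $p_g:=\pi(r_g)\in\F(B,A)$ are \emph{projections} summing to $1_{\F(B,A)}$. To see this I invoke Lemma~\ref{lem: c}: its conclusion $db=b$ for $b\in B$ together with $da=a$ for $a\in F_3$ shows that in $\F(B,A)$ the class of the ``local unit'' $d$ is $1_{\F(B,A)}$, and that $\pi(d)$ acts as a unit on each $\pi(r_g)$. Since $(\sum_g r_g)b=b$ for all $b\in B$ means $\sum_g r_g - d$ lies in $\Ann(B,A^\infty)$ (it annihilates $B$), we get $\sum_g p_g=1_{\F(B,A)}$. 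Being mutually orthogonal positive contractions that sum to a unit, each $p_g$ is automatically a projection: from $\sum_h p_h=1$ and $p_gp_h=0$ for $h\neq g$ we obtain $p_g=p_g\cdot 1=p_g^2$. The covariance relation $\alpha_g(p_h)=p_{gh}$ descends from $(iii)(a)$ because $\alpha$ preserves both $A^\infty\cap B'$ and the ideal $\Ann(B,A^\infty)$, hence is well defined on $\F(B,A)$.

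\medskip

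\emph{From $(iv)$ to $(i)$.} Given $\epsilon>0$ and finite $F\subset A$, let $B$ be the (separable, $G$-invariant) subalgebra generated by $F\cup\alpha_G(F)$. Lift the projections $p_g\in\F(B,A)$ to a representing sequence $(r_g^{(n)})_n$ of positive contractions in $A^\infty\cap B'$ (possible since each $p_g$ is a positive contraction, and one may arrange orthogonality of the lifts by a functional-calculus cutdown). For $n$ large the relations (a),(b) translate into the approximate relations (i),(ii),(iii) of Definition~\ref{def: Rokhlin}: commutation with $F$ comes from $r_g^{(n)}\in A^\infty\cap B'$, near-covariance from $\alpha_g(p_h)=p_{gh}$, and $\|(\sum_g r_g^{(n)})a-a\|<\epsilon$ from $\sum_g p_g=1_{\F(B,A)}$ acting as a unit on $B\supseteq F$. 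Choosing such an $n$ and setting $r_g:=r_g^{(n)}$ gives the required Rokhlin elements.

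\medskip

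I expect the main obstacle to be the $(iii)\Rightarrow(iv)$ step, specifically the bookkeeping with the ideal $\Ann(B,A^\infty)$: one must verify that passing to the quotient $\F(B,A)$ both turns the ``approximate unit'' relation $(\sum_g r_g)b=b$ into the exact relation $\sum_g p_g=1$ \emph{and} forces the $p_g$ to be genuine projections. The device that makes this clean is Lemma~\ref{lem: c}, which supplies a positive contraction $d$ in the fixed-point algebra acting as a unit modulo the annihilator; identifying $\pi(d)$ with $1_{\F(B,A)}$ is what converts all the approximate statements into exact ones.
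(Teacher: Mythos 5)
Your chain (i)$\Rightarrow$(ii)$\Rightarrow$(iii)$\Rightarrow$(iv) is essentially sound and parallels the paper (which proves (i)$\Rightarrow$(iii)$\Rightarrow$(ii)$\Rightarrow$(i) and (iii)$\Rightarrow$(iv)$\Rightarrow$(ii)); in fact your invocation of Lemma \ref{lem: c} in (iii)$\Rightarrow$(iv) is harmless but unnecessary, since $\sum_g r_g$ itself already represents $1_{\F(B,A)}$. The genuine gap is in (iv)$\Rightarrow$(i), at the covariance estimate. The relation $\alpha_g(p_h)=p_{gh}$ holds only in the quotient $\F(B,A)=(A^\infty\cap B')/\Ann(B,A^\infty)$, so for your mutually orthogonal positive contractive lifts $f_h\in A^\infty\cap B'$ it gives only $\alpha_g(f_h)-f_{gh}\in \Ann(B,A^\infty)$, i.e. $(\alpha_g(f_h)-f_{gh})b=0$ for all $b\in B$. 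On representing sequences this controls $\|(\alpha_g(r_h^{(n)})-r_{gh}^{(n)})b\|$ for finitely many $b$, but says nothing about the full norm $\|\alpha_g(r_h^{(n)})-r_{gh}^{(n)}\|$: an element of the annihilator can have norm $1$. Since Definition \ref{def: Rokhlin}(i) demands $\|\alpha_g(r_h)-r_{gh}\|<\epsilon$ in norm (not merely against elements of $F$), choosing $n$ large does not produce Rokhlin elements, and your phrase ``near-covariance from $\alpha_g(p_h)=p_{gh}$'' is exactly where the argument breaks. Nor can you repair it by choosing the lifts equivariantly: setting $f_g:=\alpha_g(f_e)$ gives exact covariance but destroys the exact mutual orthogonality that the definition also requires, so orthogonal lifting and equivariant lifting pull in opposite directions.

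This is precisely the point where the paper deploys Lemma \ref{lem: c} --- in the step (iv)$\Rightarrow$(ii), not in (iii)$\Rightarrow$(iv) where you used it. Applying that lemma with $F_1=\{f_g: g\in G\}$, $F_2=\{\alpha_g(f_h)-f_{gh}: g,h\in G\}\subset\Ann(B,A^\infty)$, and $F_3=\{\sum_g f_g\}$ produces a $G$-invariant positive contraction $d\in(A^\infty\cap B')^G$ with $db=b$ for $b\in B$, $df_g=f_gd$, $(\alpha_g(f_h)-f_{gh})d=0$, and $(\sum_g f_g)d=d$. Setting $r_g:=df_gd$ then restores exact relations in $A^\infty$: using the $G$-invariance of $d$ and self-adjointness,
\begin{align*}
\alpha_g(r_h)=d\,\alpha_g(f_h)\,d=d\,f_{gh}\,d=r_{gh},\qquad r_gr_h=d^2f_gf_hd^2=0\ (g\neq h),\qquad \Bigl(\sum_g r_g\Bigr)b=d^2b=b,
\end{align*}
which is statement (ii); from exact relations in $A^\infty$ the lifting/large-$n$ argument is valid and yields (i). So your proof becomes correct once Lemma \ref{lem: c} (or an equivalent $G$-invariant cut-down device) is inserted into (iv)$\Rightarrow$(i); as written, that step fails.
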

\begin{proof}
(i)$\Rightarrow$(iii). Let $B$ be a separable C*-subalgebra of $A$. Choose finite subsets $(F_n)_{n\in \N}$ of $A$ such that $F_n\subset F_{n+1}$ for all $n$, and $\overline{\bigcup_{n\in\N}F_n}=B$. For each $n$ choose Rokhlin elements $(r_g^{(n)})_{g\in G}$ such that  
\begin{align*}
\|\alpha_g(r^{(n)}_h)-r^{(n)}_{gh}\|<\frac 1 n,\quad \|r^{(n)}_gb-br^{(n)}_g\|<\frac 1 n, \quad 
\|(\sum_{g\in G} r^{(n)}_g)b-b\|<\frac 1 n,
\end{align*}
for all $g, h\in G$ and all $b\in F_n$. Let $r_g=\left(r_g^{(n)}\right)_n\in A^\infty$. Then $\alpha_{g}(r_h)=r_{gh}$ for all $g,h\in G$, and
\begin{align*}
r_gb=br_g, \quad (\sum_{g\in G}r_g)b=b,
\end{align*}
for all $b\in \bigcup_{n\in \N}F_n$. Using that $\bigcup_{n\in\N}F_n$ is dense in $B$ we get the previous equalities for all $b\in B$. This shows that the elements $r_g$, with $g\in G$, belong to $A^\infty\cap B'$ and that the satisfy (a) and (b) of (iii).

(iii)$\Rightarrow$(ii). This is clear by taking $B$ to be the C*-subalgebra generated by the finite set $F$.

(ii)$\Rightarrow$(i). Let $F$ be a finite subset of $A$. Let $(r_g)_{g\in G}\subset A^\infty\cap F'$ be elements satisfying conditions (a) and (b) of (ii). By \cite[Lemma 10.1.12]{Loring} these elements can be lifted to mutually orthogonal positive contractions $((r_{g,n})_n)_{g\in G}\in \ell_\infty(\N, A)$. Let $\epsilon>0$. Then using the definition of $A^\infty$ and that the elements $(r_g)_{g\in G}$ satisfy conditions (a) and (b) of (ii) we get
\begin{align*}
&\|\alpha_g(r_{h,n})-r_{gh,n}\|<\epsilon,\qquad \forall g,h\in G,\\
&\|r_{g,n}b-br_{g,n}\|<\epsilon,\qquad \forall g\in G, \forall b\in F,\\
&\|(\sum_{g\in G}r_{g,n})b-b\|<\epsilon,\qquad \forall b\in F, 
\end{align*}
for some $n\in \N$.
This shows that $\alpha$ has the  Rokhlin property.

(iii)$\Rightarrow$(iv). Let $B$ be a separable $G$-invariant C*-subalgebra of $A$. By (iii) there exist positive orthogonal contractions $(r_g)\subset A^\infty\cap  B'$ satisfying (a) and (b) of (iii). Let $\pi\colon A^\infty\cap B'\to \F(A,B)$ be the quotient map. Set $p_g=\pi(r_g)$. Then by (b) of (iii) we have $\sum_{g\in G}p_g=1_{\F_{B, A}}$. Also, by (a) of (iii) we get $\alpha_g(p_h)=p_{g,h}$ for all $g,h\in G$. Since the elements $(r_g)_{g\in G}$ are orthogonal, the elements $(p_g)_{g\in G}$ are orthogonal. It follows that $p_h^2=p_h\sum_{g\in G}p_g=p_h$ for all $h\in G$. Hence, $(p_g)_{g\in G}$ are projections.

(iv)$\Rightarrow$(ii). Let $F$ be a finite subset of $A$ and let $B$ be the C*-subalgebra of $A$ generated by the finite set $\bigcup_{g\in G}\alpha_g(F)$. Then $B$ is $G$-invariant and separable. Let $(p_g)_{g\in G}\subset \F(B, A)$ be projections satisfying conditions (a) and (b) of (iv). Then by \cite[Lemma 10.1.12]{Loring} there exit mutually orthogonal positive contractions $(f_g)_{g\in G}\subset A^\infty\cap B'$ such that $\pi(f_g)=p_g$. Note that these elements satisfy
$$\alpha_g(f_h)b=f_{gh}b, \quad (\sum_{g\in G}f_g)b=b,$$
for all $g,h\in G$ and all $a\in B$. By Lemma \ref{lem: c} applied to the finite sets $F_1=\{f_g: g\in G\}$, $F_2=\{\alpha_g(f_h)-f_{gh}: g,h\in G\}$, and $F_3=\{\sum_{g\in G}f_g\}$, there exists a positive contraction $d\in (A^\infty\cap B')^G$ such that 
\begin{align*}
db=b, \qquad df_g=f_gd, \qquad \alpha_g(f_h)d=f_{gh}d, \qquad (\sum_{g\in G}f_g)d=d,
\end{align*}
for all $b\in B$, and $g,h\in G$. For each $g\in G$ set $r_g=df_gd$. Then $(r_g)_{g\in G}\subset A^\infty\cap B'$. Also, by the previous equations and that $d\in (A^\infty\cap B')^G$ we get 
\begin{align}\label{eq: rg}
\begin{aligned}
&\sum_{g\in G}r_g=d^2,\\
&\alpha_g(r_h)=r_{gh} \qquad \forall g,h\in G,\\
&r_gr_h=0 \qquad\forall g,h\in G, g\neq h.
\end{aligned}
\end{align}
Therefore, since $(r_g)_{g\in G}\subset A^\infty\cap F'$ and $d^2b=b$ for all $b\in B$, it follows that the elements $(r_g)_{g\in G}$ satisfy conditions (a) and (b) of (ii). 
\end{proof}

\begin{corollary}
Let $A$ be a unital C*-algebra and let $\alpha\colon G\to \Aut(A)$ be an action of a finite group $G$ on $A$. Then $\alpha$ has the standard Rokhlin as in \cite[Definition 1.1]{Phillips-TracialR} (without the assumption of separability) if and only if it has the Rokhlin property as defined in Definition \ref{def: Rokhlin}.
\end{corollary}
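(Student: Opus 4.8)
The plan is to prove the corollary by showing that, in the unital setting, the definition of the Rokhlin property in Definition~\ref{def: Rokhlin} and the standard unital Rokhlin property of \cite[Definition 1.1]{Phillips-TracialR} both reduce to the same abstract statement, namely the existence of a partition of unity by projections in the central sequence algebra $\F(A)$ that is permuted by $G$. Since $A$ is unital, $A=\F(A,A)$'s defining subalgebra $B$ may be taken to be $A$ itself, and $\F(A)=A^\infty\cap A'$ is unital with unit the class of the constant sequence $(1_A)_n$. The essential simplification is that the summation condition (iii)/(b), which in the non-unital case only says $(\sum_g r_g)a\approx a$ on a finite set, becomes in the unital case the genuine statement $\sum_g p_g = 1_{\F(A)}$.

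First I would invoke Proposition~\ref{prop: Rokhlin equivalence}: Definition~\ref{def: Rokhlin} for a unital $A$ is equivalent, by the equivalence (i)$\Leftrightarrow$(iv) applied with $B=A$ (which is separable or not---but note Proposition~\ref{prop: Rokhlin equivalence} is stated for general $A$, so separability of $B$ is not automatic), to the existence of projections $(p_g)_{g\in G}\subset \F(A,A)$ with $\alpha_g(p_h)=p_{gh}$ and $\sum_g p_g = 1_{\F(A,A)}$. Here I must be slightly careful: Proposition~\ref{prop: Rokhlin equivalence}(iv) quantifies over \emph{separable} $G$-invariant subalgebras $B$, so to land on $B=A$ I would instead argue directly at the level of central sequences, lifting the $(p_g)$ to positive contractions in $\ell_\infty(\N,A)$ via \cite[Lemma 10.1.12]{Loring} and extracting, for each $\epsilon$ and each finite set, a single index realizing conditions (i)--(iii) of Definition~\ref{def: Rokhlin}---exactly as in the (ii)$\Rightarrow$(i) step of that proposition.

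Next I would translate the standard unital Rokhlin property. \cite[Definition 1.1]{Phillips-TracialR} (without separability) asks, for each finite $F\subset A$ and $\epsilon>0$, for \emph{projections} $(e_g)_{g\in G}\subset A$ with $\sum_g e_g = 1_A$ exactly, $\|\alpha_g(e_h)-e_{gh}\|<\epsilon$, and $\|e_g a - a e_g\|<\epsilon$ for $a\in F$. Assembling such families over an increasing exhaustion of $A$ (or of a chosen separable subalgebra, in the non-separable case handled finite-set by finite-set) produces projections in $A^\infty\cap A'$ that are genuinely orthogonal, sum exactly to $1$, and are permuted by $\alpha$; passing to $\F(A)=A^\infty\cap A'$ (the annihilator ideal $\Ann(A,A^\infty)=0$ since $1_A$ acts as a unit) gives precisely the partition of unity by projections of Definition~\ref{def: Rokhlin unital}. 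Conversely, projections in $\F(A)$ lift to near-projections in $A$ that are near-orthogonal and sum near $1_A$; a standard perturbation (functional calculus to round near-projections to genuine projections, and orthogonalization within the commutant of the finite set $F$) recovers honest Rokhlin projections in $A$ of the Phillips type.

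**The hard part will be** the bookkeeping in the non-separable case, since both Definition~\ref{def: Rokhlin} and the Phillips definition are stated for arbitrary (possibly non-separable) unital $A$, whereas Proposition~\ref{prop: Rokhlin equivalence}(iii)--(iv) is phrased via separable subalgebras. The resolution is that both Rokhlin-type conditions are \emph{local}: they only ever refer to a fixed finite subset $F$ and a fixed tolerance $\epsilon$, so no genuinely non-separable phenomenon intervenes, and one may always restrict attention to the separable $G$-invariant subalgebra generated by $\bigcup_g \alpha_g(F)$. Thus the only subtlety is to confirm that when $A$ is unital the two families of local approximants---one by orthogonal positive contractions summing approximately to $1_A$, the other by exact projections summing exactly to $1_A$---can be interconverted by the perturbation arguments of Lemma~\ref{lem: Cuntz subequivalence}-type functional calculus and \cite[Lemma 10.1.12]{Loring}; I expect this to be routine but worth stating explicitly, and it is precisely the point where unitality is used, via the equality $\sum_g r_g \approx 1_A$ rather than merely $\sum_g r_g a\approx a$ on $F$.
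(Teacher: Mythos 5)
Your proposal is correct, and its skeleton is the same as the paper's: reduce, finite set by finite set, to the separable unital $G$-invariant subalgebra $B$ generated by $\bigcup_{g}\alpha_g(F)$ (and $1_A$), apply Proposition \ref{prop: Rokhlin equivalence}(iv) to get a $G$-permuted partition of unity by projections in $\F(B,A)=A^\infty\cap B'$, and then transfer this back to exact Phillips-type projections in $A$. The only point where you diverge is the transfer step: the paper lifts the unital *-homomorphism $\C[G]\cong\C^{|G|}\to A^\infty\cap B'$ through $\ell_\infty(\N,A)\to A^\infty$ in one stroke by citing semiprojectivity of $\C^{|G|}$ (\cite[Lemma 14.1.5 and Theorem 14.2.1]{Loring}), obtaining at each level exact projections summing exactly to $1_A$; you instead propose to lift via \cite[Lemma 10.1.12]{Loring} to (exactly orthogonal) positive contractions and then perturb by hand---functional-calculus rounding preserves the exact orthogonality, and the resulting projection $\sum_g e_g^{(n)}$, being within distance $<1$ of $1_A$ for large $n$, must equal $1_A$. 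That hand-rolled perturbation is precisely a proof of the stability of $\C^{|G|}$, so the two arguments have identical content; the citation route is shorter, while yours makes the mechanism explicit. One small point to make explicit when writing it up: the subalgebra $B$ (or the finite set $F$) must contain $1_A$, since otherwise $\sum_g p_g=1_{\F(B,A)}$ only yields $(\sum_g r_g)b\approx b$ on $B$ rather than $\sum_g r_g\approx 1_A$, which, as you yourself note, is exactly where unitality enters.
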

\begin{proof}
The forward implication is clear. Let us show the opposite implication. Let $A$ and $\alpha$ be as in the statement of the corollary.  Let $F$ be a finite subset of $A$ and let $B$ be the unital C*-subalgebra of $A$ generated by $\bigcup_{g\in G} \alpha_g(F)$. It is clear that $B$ is separable and $G$-invariant. Also, $\mathrm F(B, A)=A^{\infty}\cap B'$ since $B$ is unital. By (iv) of Proposition \ref{prop: Rokhlin equivalence}  there exists a partition of unity $(p_g)_{g\in G}\subset A^{\infty}\cap B'$ consisting of projections such that
$$\alpha_g(p_h)=p_{gh},$$
for all $g,h\in G$. Let $\phi\colon \C[G]\to A^{\infty}\cap B'$ be the *-homomorphism defined by $\phi(\sum_{g\in G} z_gg)=\sum_{g\in G}z_gp_g$, where $z_g\in \C$ for all $g\in G$. Using that the C*-algebra $\C[G]\cong \C^{|G|}$ is semiprojective by \cite[Lemma 14.1.5 and Theorem 14.2.1]{Loring}, we can lift $\phi$ to a unital map from $\C[G]$ to $\ell_\infty(\N, A)$. In other words, there exists a partition of unity $(q_g)_{g\in G}\subset \ell_\infty(\N, A)$, with $q_g=(q_g^{(n)})_{n\in \N}$,  consisting of projections such that 
\begin{align*}
&\sum_{g\in G}q_g^{(n)}=1, \quad \forall n\in \N,\\
&\lim_{n\to\infty}(\alpha_{g}(q^{(n)}_h)-q^{(n)}_{g,h})=0, \quad \forall g,h\in G,\\
&\lim_{n\to \infty}(q_g^{(n)}a-aq_g^{(n)})=0, \quad \forall a\in F. 
\end{align*}
This implies that $\alpha$ has the Rokhlin property as in \cite[Definition 1.1]{Phillips-TracialR}.
\end{proof}

Let $A$ be a $\sigma$-unital C*-algebra and let $\alpha\colon G\to \Aut(A)$ be an action of a finite group $G$ on $A$. We say that $\alpha$ has the \emph{Rokhlin property in the sense of \cite[Definition 3.1]{Nawata}} if there exist projections $(p_g)_{g\in G}\subset \F(A)$ such that
$$\sum_{g\in G}p_g=1_{\F(A)}, \qquad \alpha_g(p_h)=p_{g,h}\qquad \forall g,h\in G.$$

\begin{corollary}
Let $A$ be a $\sigma$-unital C*-algebra and let $\alpha\colon G\to \Aut(A)$ be an action of a finite group $G$ on $A$. Suppose that $\alpha$ has the Rokhlin property in the sense of \cite[Definition 3.1]{Nawata}, then $\alpha$ has the  Rokhlin property. Moreover, if $A$ is separable then $\alpha$ has the  Rokhlin property in the sense of \cite[Definition 3.1]{Nawata} if and only if it has the Rokhlin property.
\end{corollary}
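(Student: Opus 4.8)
The plan is to reduce both assertions to the characterization of the Rokhlin property furnished by condition (iv) of Proposition \ref{prop: Rokhlin equivalence}, by means of a natural comparison map between the central sequence algebra $\F(A)$ and the relative algebras $\F(B, A)$.

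First I would construct, for every separable $G$-invariant C*-subalgebra $B \subseteq A$, a natural unital $\alpha$-equivariant *-homomorphism $\iota \colon \F(A) \to \F(B, A)$. Since $B \subseteq A$ we have $A' \subseteq B'$, hence $A^\infty \cap A' \subseteq A^\infty \cap B'$, and this inclusion is $G$-equivariant because $A$ and $B$ are $G$-invariant. Composing it with the quotient map $A^\infty \cap B' \to \F(B, A)$ produces a *-homomorphism whose effect on the relevant ideal I must control: if $a \in \Ann(A, A^\infty)$ then $a \in A^\infty \cap A' \subseteq A^\infty \cap B'$ and $ab = 0$ for all $b \in A \supseteq B$, so $a \in \Ann(B, A^\infty)$. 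Thus $\Ann(A, A^\infty)$ is carried into $\Ann(B, A^\infty)$, and the composite descends to a map $\iota$ on $\F(A)$. The step I expect to require the most care is verifying that $\iota$ is \emph{unital}: since $A$ is $\sigma$-unital, $1_{\F(A)}$ is represented by some $d \in A^\infty \cap A'$ with $db = b$ for all $b \in A$ (for such $d$ and any $a \in A^\infty \cap A'$ one checks $(da-a)b = d(ba) - ba = 0$ for $b \in A$, so $\pi(d)$ is indeed the unit); as $B$ is separable, hence $\sigma$-unital, and $db = b$ already holds for all $b \in B$, the image of $d$ is the unit of $\F(B, A)$, giving $\iota(1_{\F(A)}) = 1_{\F(B, A)}$.

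For the first implication, suppose $\alpha$ has the Rokhlin property in the sense of \cite[Definition 3.1]{Nawata}, witnessed by projections $(p_g)_{g\in G} \subset \F(A)$ with $\sum_{g\in G} p_g = 1_{\F(A)}$ and $\alpha_g(p_h) = p_{gh}$. Fixing a separable $G$-invariant $B \subseteq A$ and setting $p_g^B := \iota(p_g)$, I obtain projections in $\F(B, A)$ for which $\sum_{g\in G} p_g^B = \iota(1_{\F(A)}) = 1_{\F(B, A)}$ and $\alpha_g(p_h^B) = \iota(p_{gh}) = p_{gh}^B$, using that $\iota$ is unital and equivariant. Hence condition (iv) of Proposition \ref{prop: Rokhlin equivalence} is satisfied, and that proposition delivers the Rokhlin property of Definition \ref{def: Rokhlin}. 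Observe that this direction uses only $\sigma$-unitality of $A$, not separability.

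For the converse in the separable case I would invoke condition (iv) of Proposition \ref{prop: Rokhlin equivalence} with $B = A$ directly: when $A$ is separable it is itself a separable $G$-invariant C*-subalgebra of $A$, and $\F(A, A) = \F(A)$, so the proposition produces projections $(p_g)_{g\in G} \subset \F(A)$ satisfying $\alpha_g(p_h) = p_{gh}$ and $\sum_{g\in G} p_g = 1_{\F(A)}$, which is exactly the Rokhlin property of \cite[Definition 3.1]{Nawata}. Combining the two directions yields the asserted equivalence. I do not anticipate any deep obstacle here: the only genuinely nonformal point is the construction and unitality of $\iota$, everything else being a push-forward of projections along an equivariant unital *-homomorphism together with a single application of Proposition \ref{prop: Rokhlin equivalence}.
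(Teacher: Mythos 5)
Your proposal is correct, but it routes through a different part of Proposition \ref{prop: Rokhlin equivalence} than the paper does. The paper's entire proof is a one-line appeal to the equivalence of (i) and (iii): for separable $A$, condition (iii) applied with $B=A$ produces exact Rokhlin elements in $A^\infty\cap A'$ whose images in $\F(A)$ are the Nawata projections, and conversely the Nawata projections lift to approximate Rokhlin elements, with the passage to exact relations absorbed into the machinery of the proposition. You instead go through condition (iv), and the genuinely new ingredient you supply is the natural unital equivariant comparison *-homomorphism $\iota\colon \F(A)\to \F(B,A)$ (well-defined because $\Ann(A,A^\infty)\subseteq \Ann(B,A^\infty)$, unital because a representative $d$ of $1_{\F(A)}$ with $db=b$ for all $b\in A$ also represents $1_{\F(B,A)}$). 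This map lets you push the Nawata projections into every $\F(B,A)$ formally, so that all lifting and correction work (Loring's lemma plus Lemma \ref{lem: c}) is inherited from the already-proven implication (iv)$\Rightarrow$(i) rather than redone; it also makes transparent exactly where $\sigma$-unitality of $A$ enters (unitality of $\F(A)$ and of $\iota$), which the paper's terse citation conceals. The converse direction, via (i)$\Rightarrow$(iv) with $B=A$ and $\F(A,A)=\F(A)$, is where the two arguments essentially coincide. In short: the paper's proof is shorter but leaves the $\sigma$-unital forward direction implicit; yours is longer but self-contained, and the comparison map $\iota$ is a reusable observation not recorded in the paper.
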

\begin{proof}
This is a consequence of the equivalence of (i) and (iii) of Proposition \ref{prop: Rokhlin equivalence}.
\end{proof}

Let $A$ be a C*-algebra and let $\alpha\colon G\to \Aut(A)$ be an action of a finite group $G$ on $A$. Let $\M(\alpha)$ denote the extension of $\alpha$ to the multiplier algebra $\M(A)$ of $A$. Recall that $\alpha$ has the \emph{multiplier Rokhlin property} (\cite[Definition 2.15]{Phillips-freeness}) if for every finite set $F\subset A$ and every $\epsilon>0$, there are mutually orthogonal projections $(p_g)_{g\in G}\subset \M(A)$ such that:
\begin{itemize}
\item[(i)] $\|\M(\alpha)_g(p_h)-p_{gh}\|<\epsilon$ for all $g,h\in G$;
\item[(ii)] $\|p_g a-ap_g\|<\epsilon$ for all $g\in G$ and all $a\in F$;
\item[(iii)] $\sum_{g\in G} p_g=1_{\M(A)}$.
\end{itemize}

\begin{corollary}
Let $A$ be a C*-algebra and let $\alpha\colon G\to \Aut(A)$ be an action of a finite group $G$ on $A$. If $\alpha$ has the multiplier Rokhlin property then $\alpha$ has the Rokhlin property.
\end{corollary}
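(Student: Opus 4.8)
The plan is to manufacture the Rokhlin elements of Definition \ref{def: Rokhlin} by symmetrically compressing the multiplier Rokhlin projections by a $G$-invariant approximate unit of $A$. The tension to resolve is that the multiplier Rokhlin property provides \emph{exactly} orthogonal projections, but these live in $\M(A)$ rather than in $A$; to pull them into $A$ one must cut them down by elements of $A$, and a naive one-sided cut would destroy orthogonality. The key observation is that the symmetric compression $p_g e p_g$ of a projection $p_g \in \M(A)$ by a positive contraction $e \in A$ lies in $A$ (since $A$ is an ideal in $\M(A)$), is again a positive contraction (because $p_g e p_g = (e^{1/2} p_g)^*(e^{1/2} p_g)$), and \emph{exactly} preserves orthogonality: if $p_g p_h = 0$ then $(p_g e p_g)(p_h e p_h) = 0$.

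Concretely, I would fix a finite set $F \subset A$ and $\epsilon > 0$, and choose an auxiliary tolerance $\epsilon'$ small relative to $\epsilon$ and $|G|$. First, applying Lemma \ref{lem: aunit} to the $G$-invariant ideal $J = A$, I obtain a $G$-invariant approximate unit $(u_\lambda)$ of $A$ consisting of positive contractions. Next, I invoke the multiplier Rokhlin property with the finite set $F$ and tolerance $\epsilon'$ to produce mutually orthogonal projections $(p_g)_{g \in G} \subset \M(A)$ with $\sum_g p_g = 1_{\M(A)}$, with $\|\M(\alpha)_g(p_h) - p_{gh}\| < \epsilon'$, and with $\|p_g a - a p_g\| < \epsilon'$ for $a \in F$. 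I then fix one index $\lambda$ large enough that $e := u_\lambda$ satisfies $\|e a - a\| < \epsilon'$ and $\|a e - a\| < \epsilon'$ for all $a \in F$, and define $r_g := p_g e p_g \in A$.

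It then remains to verify the three conditions. Orthogonality, positivity, and the contraction bound hold exactly by the observation above. For condition (i) I would use that $\alpha_g$ extends multiplicatively to $\M(\alpha)_g$ and that $e$ is $G$-invariant, so that $\alpha_g(r_h) = \M(\alpha)_g(p_h)\, e\, \M(\alpha)_g(p_h)$; the estimate $\|\M(\alpha)_g(p_h) - p_{gh}\| < \epsilon'$ then gives $\|\alpha_g(r_h) - r_{gh}\| \le 2\epsilon'$. For conditions (ii) and (iii) the routine computation is to commute $e$ past $p_g$ using $p_g a \approx a p_g$ and to absorb $e$ using $e a \approx a$ and $a e \approx a$; one finds that both $r_g a$ and $a r_g$ lie within $2\epsilon'$ of $p_g a p_g$ (giving (ii)), and that $\sum_g r_g a$ lies within $3|G|\epsilon'$ of $\sum_g a p_g = a$ (giving (iii), using $\sum_g p_g = 1$). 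Choosing $\epsilon'$ sufficiently small then makes all three quantities less than $\epsilon$.

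The only genuinely delicate point, and hence the main obstacle, is the reconciliation of exact orthogonality with membership in $A$; once the symmetric-compression trick is in place, every remaining step is a standard approximate-unit estimate. I would also flag the one place where $G$-invariance of the approximate unit is essential: it is used in condition (i) to guarantee $\alpha_g(e) = e$, so that the compressions $r_g$ transform correctly under $\alpha$.
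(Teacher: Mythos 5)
Your proof is correct, but it takes a genuinely different and more elementary route than the paper. The paper does not verify Definition \ref{def: Rokhlin} directly; instead it verifies condition (iv) of Proposition \ref{prop: Rokhlin equivalence}: it fixes a separable $G$-invariant subalgebra $B$, applies the multiplier Rokhlin property along a sequence of finite sets $F_n$ with tolerance $1/n$, compresses the projections as $u_\lambda p_g^{(n)} u_\lambda$ (approximate unit on the \emph{outside}), and here orthogonality is \emph{not} exactly preserved --- the paper recovers it only asymptotically, using that the approximate unit from Lemma \ref{lem: aunit} is quasi-central in $\M(A)$, then assembling the compressed elements into sequences in $\ell_\infty(\N,A)$, passing to $A^\infty\cap B'$ and then to the quotient $\F(B,A)$, where the images become honest orthogonal projections summing to the unit; finally Proposition \ref{prop: Rokhlin equivalence} converts this back into the Rokhlin property. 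Your symmetric compression $r_g = p_g e p_g$ sidesteps all of this: it preserves orthogonality, positivity, and membership in $A$ exactly, so you never need quasi-centrality, lifting arguments, or the central sequence algebra --- only the commutator estimate $\|p_g a - a p_g\|<\epsilon'$ already supplied by the multiplier Rokhlin property, plus $G$-invariance of $e$ for condition (i). What the paper's heavier route buys is that it passes through the exact, sequence-algebra formulation (iv), which is the form used repeatedly elsewhere in the paper; what your route buys is a shorter, self-contained verification of Definition \ref{def: Rokhlin} with explicit constants ($2\epsilon'$, $4\epsilon'$, $3|G|\epsilon'$), all of which are handled by taking $\epsilon'$ smaller than $\epsilon/(3|G|+4)$, say. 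Both arguments are sound.
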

\begin{proof}
Let $A$ and $\alpha$ be as in the statement of the corollary. Let us show that $(A, \alpha)$ satisfies (iv) of Proposition \ref{prop: Rokhlin equivalence}. Let $B$ be a separable $G$-invariant C*-subalgebra of $A$. Choose finite subsets $(F_n)_{n\in\N}$ of $B$ consisting of contractions such that $\bigcup_{n\in \N} F_n$ is dense in the unit ball of $B$. For each $n\in \N$ let $(p^{(n)}_g)_{g\in G}\subset \M(A)$ be mutually orthogonal projections satisfying properties (i), (ii), and (iii) given above for $F_n$ and $\epsilon=\frac{1}{n}$. Select a $G$-invariant approximate unit $(u_\lambda)_{\lambda\in \Lambda}$ for $A$ that is quasi-central for $\M(A)$ (this approximate unit exists by Lemma \ref{lem: aunit}). Let $n\in \N$ be fixed. Then for each  $a\in F_n$ we have
\begin{align*}
&\lim_\lambda\|(u_\lambda p^{(n)}_g u_\lambda)(u_\lambda p^{(n)}_h u_\lambda)a\|=\lim_\lambda\|p^{(n)}_g p^{(n)}_h au^4_\lambda\|=0<\frac 1 n,\quad \forall g,h\in G,\,\,\, g\neq h\\
&\lim_\lambda\|\alpha_g(u_\lambda p^{(n)}_h u_\lambda)a-u_\lambda p^{(n)}_{gh} u_\lambda a\|=\lim_\lambda\|(\M(\alpha)_g(p^{(n)}_h)- p^{(n)}_{gh})a u^2_\lambda\|<\frac 1 n,\quad \forall g,h\in G,\\
&\lim_\lambda\|(u_\lambda p^{(n)}_g u_\lambda) a-a(u_\lambda p^{(n)}_g u_\lambda)\|=\lim_\lambda\|(p^{(n)}_g  a-a p^{(n)}_g) u^2_\lambda\|<\frac 1 n,\quad \forall g\in G,\\
& \lim_\lambda\|(\sum_{g\in G}u_\lambda p^{(n)}_g u_\lambda)a-a\|=\|(\sum_{g\in G} p^{(n)}_g)a-a\|=0<\frac 1 n,
\end{align*}
Therefore, there exists a positive contraction $u\in A$ such that for $r^{(n)}_g:=up^{(n)}_gu\in A$ the following hold:
\begin{align*}
&\|r^{(n)}_gr^{(n)}_h a\|<\frac 1 n,\quad \forall g,h\in G,\,\,\, g\neq h, \quad \forall a\in F_n\\ 
&\|(\alpha_g(r^{(n)}_h)-r^{(n)}_{gh})a\|<\frac 1 n,\quad\forall g,h\in G,\quad \forall a\in F_n\\
&\|r^{(n)}_ga-ar^{(n)}_g\|<\frac 1 n,\quad\forall g\in G,\quad \forall a\in F_n\\
&\|(\sum_{g\in G}r^{(n)}_g)a-a\|<\frac 1 n,\quad \forall a\in F_n.
\end{align*}
For each $g\in G$ set $r_g:=(r_g^{(n)})_n\in \ell_\infty(\N,A)$. Then by the inequalities above, for each $a\in \bigcup_{n\in \N} F_n$ we have $r_gr_h a=0$ for all $g,h\in G$ with $g\neq h$, $(\alpha_g(r_h)-r_{gh})a=0$ for all $g,h\in G$, $ar_g=r_g a$ for all $g\in G$, and $(\sum_{g\in G}r_g)a=a$. Since $\bigcup_{n\in \N} F_n$ is dense in the unit ball of $B$, the same equalities hold for every $a\in B$. Let $\pi\colon A^\infty\cap B'\to F(B, A)$ be quotient map. For each $g\in G$ put $p_g=\pi([r_g])$. Then $(p_g)_{g\in G}$ are mutually orthogonal projections that satisfy conditions (a) and (b) of (iv) of Proposition \ref{prop: Rokhlin equivalence}. Therefore, by the same proposition $\alpha$ has the Rokhlin property.
\end{proof}

The following is an example of a $\Z_2$-action with the Rokhlin property that does not have the multiplier Rokhlin property:

\begin{example}
Consider the space $X=\{\frac 1 n: n\in \N\}$ with the topology induced from $\R$. Let $\phi\colon X\to X$ be the map  defined by $\phi(1/(2n-1))=1/(2n)$, and $\phi(1/(2n))=1/(2n-1)$,
for all $n\in \N$. It is clear that $\phi$ is bijective, continuous, proper, its inverse is continuous, and that $\phi^2=\id_X$. This implies that the map $\alpha\colon \mathrm C_0(X)\to \mathrm C_0(X)$ defined by $\alpha(f)(x)=f(\phi^{-1}(x))$ is a *-automorphism satisfying $\alpha^2=\id_{\mathrm C_0(X)}$. Consider the action of $\Z_2$ on $\mathrm C_0(X)$ defined by this automorphism. Let us see that this action has the Rokhlin property. For each $n\in \N$, let $r_n^{(1)}, r_n^{(2)}\in \mathrm C_0(X)$ denote the characteristic functions of the open subsets $\{1/(2k-1) : 1\le k\le n\}$ and $\{1/(2k) : 1\le k\le n\}$, respectively (note that these functions are continuous since the given sets are open). Then $r_n^{(1)}$ and $r_n^{(2)}$ are orthogonal, $\alpha(r_n^{(1)})=r_n^{(2)}$, and $(r_n^{(1)}+r_n^{(2)})_{n\in \N}$ is an approximate unit for $\mathrm C_0(X)$. Since $\mathrm C_0(X)$ is commutative it follows that the given $\Z_2$-action has the Rokhlin property. It is clear that this action does not have the multiplier Rokhlin property since $X$ can not be written as the union of two disjoint open sets $X_1$ and $X_2$ satisfying $\phi(X_1)=X_2$. (If $X=X_1\cup X_2$ where $X_1$ and $X_2$ are disjoint and open, then either $X_1$ or $X_2$ contains a subset of $X$ of the form $\{1/n: n>k\}$. In particular, it follows that $\phi(X_1)\neq X_2$.) 
\end{example}

Let $A$ be a C*-algebra and let $\alpha\colon G\to \Aut(A)$ be an action of a finite group $G$ on $A$. Recall that a map $\omega\colon G\to \mathrm U(\M(A))$, where $\mathrm U(\M(A))$ denotes the unitary group of $\M(A)$, is said to be an \emph{$\alpha$-cocycle} if it satisfies $\omega_{gh}=\omega_g\alpha_g(\omega_h)$ for all $g, h\in G$. Let $\beta\colon G\to \Aut(A)$ be an action of $G$ on $A$. The actions $\alpha$ and $\beta$ are said to be \emph{exterior equivalent} if there exists an $\alpha$-cocycle $\omega$ such that $\beta_g(a)=\omega_g\alpha_g(a)\omega_g^*$ for all $g\in G$ and all $a\in A$. 

\begin{theorem}
Let $A$ and $B$ be C*-algebras and let $\alpha\colon G\to \Aut(A)$ be an action of a finite group $G$ on $A$. The following statements hold:
\begin{itemize}
\item[(i)] If $\beta\colon G\to \Aut(B)$ is an action with the  Rokhlin property, then the tensor action 
$$\alpha\otimes_{\min} \beta\colon G\to \Aut(A\otimes_{\min} B)$$ 
has the  Rokhlin property. Here $\otimes_{\min}$ denotes the minimal tensor product;

\item[(ii)] If $\alpha$ has the Rokhlin property and $B$ is a $G$-invariant hereditary C*-subalgebra of $A$ then the restriction of $\alpha$ to $B$ has the Rokhlin property;

\item[(iii)] If $\alpha$ has the Rokhlin property and $I$ is a $G$-invariant closed two-sided ideal of $A$ then the action induced by $\alpha$ on $A/I$ has the Rokhlin property;

\item[(iv)] If for any finite subset $F\subset A$ and any $\epsilon>0$ there exist a C*-algebra $B$ with an action $\beta\colon G\to \Aut(B)$ with the  Rokhlin property, and an equivariant *-homomorphism $\phi\colon B\to A$ such that $\dist(a, \phi(B))<\epsilon$ for all $a\in F$. Then $\alpha$ has the  Rokhlin property;

\item[(v)]  If $(A, \alpha)$ is a direct limit---with equivariant connecting *-homomorphisms---of a direct system $(A_\lambda, \alpha_{\lambda})_{\lambda\in \Lambda}$, and every action $\alpha_{\lambda}\colon G\to \mathrm{Aut}(A_\lambda)$ has the  Rokhlin property, then $\alpha$ has the  Rokhlin property;

\item[(vi)] If $\alpha$ has the Rokhlin property and $H$ is a subgroup of $G$ then the restriction of $\alpha$ to $H$ has the Rokhlin property.

\item[(vii)] If $\beta\colon G\to \Aut(A)$ is an action with the Rokhlin property and $\alpha$ and $\beta$ are exterior equivalent then $\alpha$ has the Rokhlin property.
\end{itemize}
\end{theorem}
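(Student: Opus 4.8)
The plan is to verify condition (iv) of Proposition \ref{prop: Rokhlin equivalence} for $\alpha$, reusing the very Rokhlin projections that witness (iv) for $\beta$. The guiding principle is that two exterior equivalent actions induce the \emph{same} action on the central sequence algebra $\F(B,A)$, provided $B$ is chosen compatibly with the cocycle $\omega$; once this is established, any family of projections implementing (iv) for $\beta$ implements (iv) for $\alpha$ verbatim. I stress that the naive approach of setting $s_g=\omega_g^*r_g\omega_g$ (which does restore $\alpha$-equivariance from $\beta$-equivariance via the cocycle identity $\omega_{gh}=\omega_g\alpha_g(\omega_h)$) fails, because conjugation by $\omega_g$ destroys the mutual orthogonality of the Rokhlin elements. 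Passing to the quotient $\F(B,A)$ is exactly what makes the cocycle disappear.

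First I would fix an $\alpha$-invariant separable C*-subalgebra $B_0\subseteq A$ and enlarge it to a separable C*-subalgebra $B\supseteq B_0$ that is invariant under each $\alpha_g$ and closed under left and right multiplication by the unitaries $\omega_g,\omega_g^*\in\M(A)$, $g\in G$. This is a routine iterated-generation argument: adjoin to $B_0$ all elements $\alpha_g(b)$, $\omega_g b$, $b\omega_g$, $\omega_g^*b$, $b\omega_g^*$ with $b\in B_0$, pass to the generated C*-subalgebra, and iterate, taking the closure of the union; finiteness of $G$ and separability of $B_0$ keep $B$ separable. Note that $B$ is then automatically $\beta$-invariant, since $\beta_g(b)=\omega_g\alpha_g(b)\omega_g^*\in B$ for all $b\in B$.

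The key step is to show that the actions induced by $\alpha$ and $\beta$ on $\F(B,A)$ coincide. Let $x\in A^\infty\cap B'$ and put $y=\alpha_g(x)$, which again lies in $A^\infty\cap B'$ by $\alpha$-invariance of $B$. Since the relation $\beta_g(\,\cdot\,)=\omega_g\alpha_g(\,\cdot\,)\omega_g^*$ passes to $A^\infty$, with $\omega_g$ acting as a constant multiplier, one has
\begin{equation*}
\beta_g(x)-\alpha_g(x)=\omega_g y\omega_g^*-y=(\omega_g y-y\omega_g)\omega_g^*.
\end{equation*}
A direct computation, using $y\in B'$ together with $\omega_g B\subseteq B$ and $B\omega_g\subseteq B$, shows that $\omega_g y-y\omega_g$ annihilates $B$ on both sides and so lies in $\Ann(B,A^\infty)$; multiplying on the right by $\omega_g^*$ (and using $\omega_g^*B\subseteq B$) keeps the product in $\Ann(B,A^\infty)$. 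Hence $\beta_g(x)-\alpha_g(x)\in\Ann(B,A^\infty)$, so $\alpha_g$ and $\beta_g$ agree on $\F(B,A)=(A^\infty\cap B')/\Ann(B,A^\infty)$. This commutator computation is the heart of the argument and is exactly where closure of $B$ under multiplication by $\omega_g^{\pm}$ is indispensable.

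Finally, since $\beta$ has the Rokhlin property and $B$ is separable and $\beta$-invariant, Proposition \ref{prop: Rokhlin equivalence}(iv) yields projections $(q_g)_{g\in G}\subset\F(B,A)$ with $\beta_g(q_h)=q_{gh}$ and $\sum_{g\in G}q_g=1_{\F(B,A)}$; by the previous paragraph these also satisfy $\alpha_g(q_h)=q_{gh}$. The inclusion $B_0\subseteq B$ induces a canonical unital, $\alpha$-equivariant $*$-homomorphism $\bar\iota\colon\F(B,A)\to\F(B_0,A)$: it is well defined because $A^\infty\cap B'\subseteq A^\infty\cap B_0'$ and $\Ann(B,A^\infty)\subseteq\Ann(B_0,A^\infty)$, and it is unital because a local unit for $B$ is also one for $B_0$. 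Setting $p_g=\bar\iota(q_g)$ produces mutually orthogonal projections in $\F(B_0,A)$ with $\alpha_g(p_h)=p_{gh}$ and $\sum_{g\in G}p_g=1_{\F(B_0,A)}$, which verifies condition (iv) of Proposition \ref{prop: Rokhlin equivalence} for $\alpha$. Therefore $\alpha$ has the Rokhlin property. The main obstacle, as flagged above, is the breakdown of orthogonality under cocycle conjugation, which forces the passage to $\F(B,A)$ and the cocycle-adapted choice of $B$; the descent along $\bar\iota$ is needed only to recover condition (iv) for the originally prescribed $B_0$.
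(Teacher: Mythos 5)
Your proposal addresses only part (vii) of the theorem. Parts (i)--(vi) (tensor products, hereditary subalgebras, quotients, local approximation, direct limits, restriction to subgroups) are never discussed, so as a proof of the stated theorem it is incomplete. The remarks below concern part (vii), which is the only part you prove.

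For (vii) your argument is correct, and it takes a genuinely different route from the paper's. The paper verifies condition (ii) of Proposition \ref{prop: Rokhlin equivalence}: using Lemma \ref{lem: aunit} it picks a $G$-invariant approximate unit of $A$ quasicentral in $\M(A)$, selects terms $u_{i_n}$ that almost commute with the cocycle unitaries and almost act as a unit on the given finite set $F$, chooses Rokhlin elements $r_g^{(n)}$ for the enlarged finite sets $F\cup\{u_{i_n}w_g\colon g\in G\}$, and then shows that in $A^\infty$ the elements $f_g:=ur_gu$ (where $u$ is shown to commute with each $r_g$ via the identity $u^2r_g=r_gu^2$ and a functional-calculus argument for the square root) are exactly equivariant for the other action, mutually orthogonal, and act as a unit on $F$. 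You instead verify condition (iv): your commutator computation showing $\beta_g(x)-\alpha_g(x)=(\omega_g y-y\omega_g)\omega_g^*\in\Ann(B,A^\infty)$ for $x\in A^\infty\cap B'$ is correct (both one-sided products against $B$ vanish, using $y\in B'$ and $\omega_g^{\pm}B\subseteq B$, $B\omega_g^{\pm}\subseteq B$), as are the $\beta$-invariance of the enlarged $B$, the unitality and $\alpha$-equivariance of the descent map $\bar\iota\colon\F(B,A)\to\F(B_0,A)$, and the recognition that the universal quantifier in (iv) over all separable invariant subalgebras is what forces the descent step. Your side remark about the failure of the naive conjugation $s_g=\omega_g^*r_g\omega_g$ is also accurate. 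The trade-off: your route is more conceptual and proves a stronger, reusable fact---exterior equivalent actions become literally equal on $\F(B,A)$ once $B$ is cocycle-closed, so any property phrased in terms of these relative central sequence algebras transfers automatically---whereas the paper's route avoids the enlargement-and-descent bookkeeping entirely, working with explicit elements and finite sets at the cost of the trick needed to make $u$ commute with the $r_g$. (A harmless difference of orientation: the paper assumes $\alpha$ has the Rokhlin property and deduces it for $\beta$, which suffices by symmetry of exterior equivalence; you keep the direction as stated.)
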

\begin{proof}
(i) Given that $G$ is finite the tensor action $\alpha\otimes_{\min}\beta$ is well defined. Let us show that $\alpha\otimes \beta$ has the Rokhlin property. Since $A\otimes_{\min} B$ is the closure of the span of simple tensors, it is enough to show that for any $\epsilon>0$ and any finite set $F\subset A\otimes_{\min} B$ consisting of simple tensors, there exist orthogonal positive contractions $(f_g)_{g\in G}$ in $A\otimes_{\min} B$ satisfying (i), (ii), and (iii) of Definition \ref{def: Rokhlin}. Let $\epsilon>0$ and 
$$F=\{a_k\otimes b_k: a_k\in A,\, b_k\in B,\, 1\le k\le n\}.$$ 
Choose an approximate unit $(u_i)_{i\in I}$ for $A$ consisting of $G$ invariant elements (this approximate unit exists by Lemma \ref{lem: aunit}). Then there exists $i\in I$ such that 
\begin{align}\label{eq: ua}
\|u_ia_k-a_ku_i\|<\frac \epsilon 2, \qquad \|u_ia_k-a_k\|<\frac \epsilon 2,
\end{align}
for $1\le k\le n$. Choose Rokhlin elements $(r_g)_{g\in G}\subset B$ for $\beta$ corresponding to the finite set $\{b_1, b_2, \cdots, b_n\}$ and to the number $\frac \epsilon 2$. For each $g\in G$ put $f_g=u_i\otimes r_g$. Then the elements $(f_g)_{g\in G}$ are mutually orthogonal since the elements $(r_g)_{g\in G}$ are orthogonal. Now by the triangle inequality, the choice of $(r_g)_{g\in G}$, and the inequalities in \eqref{eq: ua} we get
\begin{align*}
&\|(\alpha\otimes\beta)_g(f_h)-f_{gh}\|\le \|u_i\otimes(\beta_g(r_h)-r_{gh})\|\le \frac \epsilon 2<\epsilon,\\
&\|f_g(a_k\otimes b_k)-(a_k\otimes b_k)f_g\|\le\|(u_ia_k-a_ku_i)\otimes r_gb_k\|+\|a_ku_i\otimes(r_gb_k-b_kr_g)\|<\frac \epsilon 2+\frac \epsilon 2=\epsilon,\\
&\|(\sum_{g\in G}f_g)(a_k\otimes b_k)-(a_k\otimes b_k)\|\le \|(u_ia_k-a_k)\otimes \sum_{g\in G}r_gb_k\|+\|a_k\otimes (\sum_{g\in G}r_gb_k-b_k)\|<\frac \epsilon 2+\frac \epsilon 2=\epsilon,
\end{align*}
for $1\le k\le n$. We have shown that $(f_g)_{g\in G}$ are Rokhlin elements for $\beta$ corresponding to the finite set $F$ and the number $\epsilon$. This implies that $\beta$ has the  Rokhlin property.

(ii) Let $A$, $B$, and $\alpha$ be as in the statement of (ii). Let $F$ be a finite subset of $B$. Then by (ii) of Proposition \ref{prop: Rokhlin equivalence} there exist orthogonal positive contractions $(r_g)_{g\in G}\subset A^\infty\cap F'$ satisfying $\alpha_g(r_h)=r_{gh}$ for all $g,h\in G$, and $(\sum_{g\in G}r_g)a=a$ for all $a\in F$. By Lemma \ref{lem: c} applied to the separable C*-subalgebra of $B$ generated by the elements in $F$ and to the finite sets $F_1=\{r_g: g\in G\}$, $F_2=\{0\}$, and $F_3=\{\sum_{g\in G}r_g\}$, there exists a positive contraction $d\in B^\infty\cap B'\subset A^\infty\cap B'$ such that $da=a$ for all $a\in F$, $dr_g=r_gd$ for all $g\in G$, and $(\sum_{g\in G}r_g)d=d$. For each $g\in G$ set $f_g=dr_gd$. Then $f_g\in B^\infty\cap B'$ since $B$ is a hereditary C*-subalgebra of $A$. Also, it is straightforward to check that the elements $(f_g)_{g\in G}$ satisfy (a) and (b) of (ii) of Proposition \ref{prop: Rokhlin equivalence}. Therefore, by the equivalence of (i) and (ii) in the same proposition, the restriction of $\alpha$ to $B$ has the  Rokhlin property. 

(iii) Let $\epsilon>0$ and let $F\subset A/I$ be a finite set. Let $\pi\colon A\to A/I$ denote the quotient map. Choose a finite set $\widetilde F\subset A$ such that $\pi(\widetilde F)=F$. Choose Rokhlin elements $(r_g)_{g\in G}$ for $\widetilde F$ and $\epsilon>0$. Then it is straightforward that $(\phi(r_g))_{g\in G}$ are Rokhlin elements for $F$ and $\epsilon$. 

(iv) Let $A$ and $\alpha$ be as in the statement of the proposition. Let $F=\{a_1, a_2,\cdots a_n\}$ be a finite subset of $A$. Then by assumption there exist a C*-algebra $B$ with an action $\beta\colon G\to \Aut(B)$ with the  Rokhlin property, an equivariant *-homomorphism $\phi\colon B\to A$, and a finite subset $\widetilde F=\{b_1, b_2, \cdots b_n\}\subset B$ such that $\|a_i-\phi(b_i)\|<\frac \epsilon 3$ for $1\le i\le n$. 
Since $\beta$ has the  Rokhlin property, there exist orthogonal positive contractions $(r_g)_{g\in G}\subset B$ such that
$$\|\alpha_g(r_h)-r_{gh}\|<\epsilon, \quad \|r_gb_i-b_ir_g\|<\frac{\epsilon}{3},\quad  \|(\sum_{g\in G}r_g)b_i-b_i\|<\frac{\epsilon}{3},$$
for $g,h\in G$ and $1\le i\le n$. For each $g\in G$ set $f_g:=\phi(r_g)$. Then $(f_g)_{g\in G}$ are orthogonal positive contractions. Using that $\phi$ is equivariant we get 
$$\|\beta_g(f_h)-f_{gh}\|=\|\phi(\alpha_g(r_h)-r_{gh})\|\le \|\alpha_g(r_h)-r_{gh}\|<\epsilon.$$
Also, using the triangle inequality we get
\begin{align*}
&\|f_ga_i-a_if_g\|\le \|\phi(r_gb_i-b_ir_g)\|+\|f_g(a_i-\phi(b_i))\|+\|(a_i-\phi(b_i))f_g\|<\frac{\epsilon}{3}+\frac{\epsilon}{3}+\frac{\epsilon}{3}=\epsilon,\\
&\|(\sum_{g\in G}f_g)a_i-a_i\|\le\|\phi((\sum_{g\in G}r_g)b_i-b_i)\|+\|(\sum_{g\in G}f_g)(a_i-\phi(b_i))\|+\|a_i-\phi(b_i)\|<\frac{\epsilon}{3}+\frac{\epsilon}{3}+\frac{\epsilon}{3}=\epsilon,
\end{align*}
for $g\in G$ and $1\le i\le n$. Hence, $(f_g)_{g\in G}$ are Rokhlin elements for $\alpha$, $F$ and $\epsilon$. 
It follows that $\beta$ has the  Rokhlin property.

(v) This is a particular case of (iv).

(vi) Let $H$ be a subgroup of $G$ and let $n$ denote the number of elements of $G/H$. Let $F\subset A$ and let $\epsilon>0$. Choose a family $K$ of representatives of $H$ in $G$ and Rokhlin elements $(r_g)_{g\in G}$ for $\alpha$, $F$, and $\epsilon/n$. Then it is easy to check using the triangle inequality that $(f_h)_{h\in H}$, with $f_h=\sum_{g\in K}f_{hg}$ for all $h\in H$, are Rokhlin elements for the restriction of $\alpha$ to $H$, the finite set $F$, and the number $\epsilon>0$.

(vii) Let $\alpha, \beta\colon G\to \Aut(A)$ be actions of $G$ on $A$ that are exterior equivalent and suppose that $\alpha$ has the Rokhlin property. Let $\omega\colon G\to \mathrm U(\M(A))$ be the $\alpha$-cocycle implementing the equivalence. Let $F$ be a finite subset of $A$. Using Lemma \ref{lem: aunit} choose an approximate unit $(u_i)_{i\in I}\subset A$ for $A$ that is quasicentral in $\M(A)$ and that satisfies $\alpha_g(u_i)=u_i$ for all $g\in G$ and $i\in I$. Then for each $n\in \N$ there exists $i_n\in I$ such that 
 \begin{align*}
&\|u_{i_n}a-a\|<\frac 1 n,\quad \forall a\in F,\\
&\|u_{i_n}w_g-w_gu_{i_n}\|<\frac 1 n, \quad \forall g\in G.
\end{align*}
For each $n\in \N$ choose Rokhlin elements $(r_g^{(n)})_{g\in G}\subset A$ for the the finite set $F_n=F\cup\{u_{i_n}w_g: g\in G\}$ and the number $\frac 1 n$. Consider the elements $u=[(u_n)_n], r_g=[(r_g^{(n)})]\in A^\infty$. Then it follows that
\begin{eqnarray}
&ua=au=a,
&uw_g=w_gu, \label{eq: 1}\\
&\alpha_g(r_h)=r_{gh},
&r_ga=ar_g,\quad r_guw_g=uw_gr_g, \label{eq: 2}\\
&(\sum_{g\in G}r_g)a =a,
&(\sum_{g\in G}r_g)uw_h =uw_h,\label{eq: 3}
\end{eqnarray}
for all $a\in F$ and $g,h\in G$. Note that by the third equality in \eqref{eq: 2} we have
$$u^2r_g=(uw_g)(uw_g)^*r_g=(uw_g)r_g(uw_g)^*=r_g(uw_g)(uw_g)^*=r_gu^2,$$
for all $g\in G$.
This implies that $f(u^2)r_g=r_gf(u^2)$ for every $f\in \mathrm{C}_0((0, \|u\|], \R_+)$, and in particular for square root function. Therefore, $ur_g=r_gu$ for all $g\in G$.
For each $g\in G$ set $f_g:=ur_gu$. Then $f_g=u^2r_g=r_gu^2$. In addition, by the equalities in \eqref{eq: 1}, \eqref{eq: 2}, and \eqref{eq: 3} we get
\begin{align*}
\beta_g(f_h)=w_g\alpha_g(ur_hu)w_g^*=w_gur_{gh}uw_g^*=uw_gr_{gh}(uw_g)^*=r_{gh}(uw_g)(uw_g)^*=r_{gh}u^2=f_{gh},
\end{align*}
for all $g,h\in G$, 
\begin{align*}
f_gf_h=(ur_gu)(ur_hu)=u^2r_gr_hu^2=0,
\end{align*}
for all $g,h\in G$ with $g\neq h$,
\begin{align*}
f_ga=ur_gua=ur_gau=aur_gu=af_g,
\end{align*}
for all $g\in G$ and $a\in F$, and
\begin{align*}
(\sum_{g\in G}f_g)a=\sum_{g\in G}ur_gua=u(\sum_{g\in G}r_g)a=ua=a, 
\end{align*}
for all $a\in F$. This shows that $(f_g)_{g\in G}$ are elements of $A^\infty\cap F'$ that satisfy conditions (a) and (b) of (ii) of Proposition \ref{prop: Rokhlin equivalence} for the action $\beta$ and the finite set $F$. Therefore, it follows by the same proposition that $\beta$ has the Rokhlin property.    
\end{proof}

Recall that an action $\alpha\colon G\to \Aut(A)$ is said to be \emph{pointwise outer} if, for $g\in G \setminus\{e\}$, the automorphism $\alpha_g$ is outer, that is, not of the form $a\mapsto \Ad(u)(a)=uau^*$ for some unitary $u$ in the multiplier algebra $\M(A)$ of $A$. 

\begin{proposition}\label{prop: outer}
Let $A$ be C*-algebra and let $\alpha\colon G\to \Aut(A)$ be an action of a finite group $G$ on $A$ with the Rokhlin property. Then $\alpha$ is pointwise outer.
\end{proposition}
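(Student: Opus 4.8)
The plan is to argue by contradiction, and the strategy is to exploit the fourth characterization of the Rokhlin property in Proposition \ref{prop: Rokhlin equivalence}. Suppose that $\alpha_g=\Ad(u)$ for some $g\in G\setminus\{e\}$ and some unitary $u\in\M(A)$. I would produce mutually orthogonal projections $(p_h)_{h\in G}$ in a central-sequence algebra $\F(B,A)$ on which the induced action of $\alpha_g$ is \emph{trivial}; the permutation relation $\alpha_g(p_e)=p_g$ then collapses the partition of unity and yields the contradiction. The first step is to build a suitable separable $G$-invariant subalgebra $B$. Starting from any nonzero separable $S_0\subseteq A$, set $S_{n+1}=\CC^*\bigl(S_n\cup uS_n\cup u^*S_n\cup\bigcup_{h\in G}\alpha_h(S_n)\bigr)$ and let $B=\overline{\bigcup_n S_n}$. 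Then $B$ is nonzero, separable, and $G$-invariant, and crucially $uB\subseteq B$ and $u^*B\subseteq B$. Being separable, $B$ is $\sigma$-unital, so Proposition \ref{prop: Rokhlin equivalence}(iv) applies and yields mutually orthogonal projections $(p_h)_{h\in G}\subset\F(B,A)$ with $\alpha_h(p_k)=p_{hk}$ for all $h,k\in G$ and $\sum_{h\in G}p_h=1_{\F(B,A)}$, where $1_{\F(B,A)}\neq 0$ since $B\neq 0$.

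The heart of the argument is to show that $\alpha_g=\Ad(u)$ induces the identity automorphism on $\F(B,A)$. Recall that $\F(B,A)=(A^\infty\cap B')/\Ann(B,A^\infty)$ and that, since $\alpha_g$ acts coordinatewise on $A^\infty$ and agrees with $\Ad(u)$ on $A$, the induced map sends the class $[x]$ to $[uxu^*]$. Fix $x\in A^\infty\cap B'$. For every $b\in B$ we have $u^*b\in B$, so $x$ commutes with $u^*b$, and hence
\[
uxu^*b=u\,x\,(u^*b)=u\,(u^*b)\,x=bx=xb.
\]
Therefore $(uxu^*-x)b=0$ for all $b\in B$, that is, $uxu^*-x\in\Ann(B,A^\infty)$, so $[uxu^*]=[x]$ in $\F(B,A)$. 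This is precisely the point at which the extra invariance $u^*B\subseteq B$ is used: elements of $A^\infty\cap B'$ commute only with $B$, not with $u^*b$ for a general $b$, so without closing $B$ under left multiplication by $u^*$ the computation breaks down.

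With $\alpha_g$ acting trivially on $\F(B,A)$, we obtain $p_g=\alpha_g(p_e)=p_e$. Since $p_e$ and $p_g$ are orthogonal, this forces $p_e=p_e^2=p_ep_g=0$, and then $p_h=\alpha_h(p_e)=0$ for every $h\in G$, contradicting $\sum_{h\in G}p_h=1_{\F(B,A)}\neq 0$. The main obstacle is exactly the passage through the multiplier algebra: because $u$ need not lie in $A$, the triviality of $\Ad(u)$ on the central-sequence algebra is not automatic, and it is the construction of a separable $G$-invariant $B$ that is moreover invariant under multiplication by $u$ and $u^*$ that makes the key computation go through. (For separable $A$ one may simply take $B=A$ and work in the central sequence algebra $\F(A)=\F(A,A)$, where the analogous computation uses $u^*b\in A$ directly.)
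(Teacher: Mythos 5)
Your proof is correct, but it takes a genuinely different route from the paper's. The paper argues through characterization (ii) of Proposition \ref{prop: Rokhlin equivalence}: it picks a nonzero element $b$ of the fixed-point algebra $A^\alpha$, applies (ii) to the two-element set $\{b,bu^*\}$, and computes that on one hand $\alpha_{g_0}(r_eb)=r_{g_0}b$ (permutation relation plus $\alpha_{g_0}(b)=b$), while on the other hand $u(r_eb)u^*=u(bu^*)r_e=br_e$; since $r_{g_0}b$ and $br_e$ are orthogonal and $r_eb\neq 0$, the hypothesis $\alpha_{g_0}=\Ad(u)$ forces $0=\|r_{g_0}b-br_e\|=\max(\|r_{g_0}b\|,\|br_e\|)\neq 0$, a contradiction. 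You instead go through characterization (iv): you saturate a nonzero separable subalgebra until it is invariant under $\alpha$ and under left multiplication by $u$ and $u^*$, show that $\Ad(u)$ then descends to the \emph{identity} on $\F(B,A)$, and collapse the partition of unity $p_g=\alpha_g(p_e)=p_e$ against orthogonality. The underlying germ is the same in both arguments --- adjoining $u^*$-translates ($bu^*$ in the paper, $u^*B\subseteq B$ in yours) is exactly what makes conjugation by $u$ asymptotically trivial against the Rokhlin elements --- but the mechanics differ. The paper's version is shorter: it needs no construction of $B$ and no passage to the quotient $\F(B,A)$, just a norm computation in $A^\infty$. Yours isolates a reusable structural fact (an inner automorphism acts trivially on $\F(B,A)$ once $uB,u^*B\subseteq B$), yields a purely algebraic contradiction, and sidesteps the (easy, but unstated in the paper) fact that $A^\alpha\neq 0$. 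Two points you pass over quickly are both harmless: mutual orthogonality of the $p_h$ is automatic for projections summing to the unit (and the paper's proof of (iii)$\Rightarrow$(iv) produces them orthogonal anyway), and $1_{\F(B,A)}\neq 0$ because the class of a sequential approximate unit of $B$ represents the unit and acts as the identity on $B$, so it cannot lie in $\Ann(B,A^\infty)$ unless $B=0$.
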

\begin{proof}
Suppose that there exist $g_0\in G\setminus \{e\}$ and a unitary $u$ in the multiplier algebra $\M(A)$ of $A$ such that $\alpha_{g_0}(a)=uau^*$ for all $a\in A$. Let $b$ be a nonzero element of the fixed-point algebra $A^\alpha$. By (ii) of Proposition \ref{prop: Rokhlin equivalence} applied to the finite set $F=\{b, bu^*\}$, there exist orthogonal positive contractions $(r_g)_{g\in G}\subset A^\infty$ such that 
$$\alpha_g(r_h)=r_{g,h}, \quad r_g(bu^*)=(bu^*)r_g,\quad r_gb=br_g\quad  (\sum_{g\in G}r_g)b=b,$$
for all $g,h\in G$. Note that $r_eb\neq 0$ since $\sum_{g\in G}\alpha_g(r_eb)=(\sum_{g\in G}r_g)b=b\neq 0$. Also, we have $(r_{g_0}b)(br_e)=(br_e)(r_{g_0}b)=0$ and 
$$ur_ebu^*=u(bu^*)r_e=\alpha_{g_0}(b)r_e=br_e.$$
Hence,
$$0=\|\alpha_{g_0}(r_eb)-ur_ebu^*\|=\|r_{g_0}b-br_e\|=\max(\|r_{g_0}b\|, \|br_e\|)\neq 0,$$
which is a contradiction. Therefore $\alpha_g$ is outer for all $g\in G\setminus\{e\}$.
\end{proof}

Let $A$ be a C*-algebra and let $\alpha\colon G\to \Aut(A)$ be an action of a finite group $G$ on $A$. Let us denote by $\mathrm{Ideal}(A)$ the set of closed two-sided ideals of $A$, by $\mathrm{Ideal}(A)^G\subset \mathrm{Ideal}(A)$ the set of $G$-invariant ideals, and by $\mathrm{Ideal}(a)$ the closed two-sided ideal of $A$ generated by $a\in A$. The following result is essentially \cite[Theorem 1.25]{Sierakowski}.

\begin{proposition}
Let $A$ be C*-algebra and let $\alpha\colon G\to \Aut(A)$ be an action of a finite group $G$ on $A$ with the Rokhlin property. Then the map
\begin{align*}
\mathrm{Ideal}(A\rtimes_\alpha G)&\longrightarrow \mathrm{Ideal}(A)^G,\\
I&\longmapsto I\cap A,
\end{align*}
is a bijection. In particular, every ideal of $J\subset A\rtimes_\alpha G$ has the form $I\rtimes_{\alpha_{(\cdot)|_I}} G$ for some $G$-invariant ideal $I$ of $A$.
\end{proposition}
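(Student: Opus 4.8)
The plan is to exploit the canonical conditional expectation together with the fact that, since $G$ is finite, every element of $A\rtimes_\alpha G$ is a finite sum $\sum_{g\in G}a_gu_g$ with $a_g\in A$ and $u_g$ the canonical unitaries implementing $\alpha$. Write $\E\colon A\rtimes_\alpha G\to A$ for the canonical conditional expectation, so that $\E\big(\sum_{g}a_gu_g\big)=a_e$. The candidate inverse of the stated map is $I\mapsto I\rtimes_\alpha G$; since $G$ is finite one checks directly that $I\rtimes_\alpha G$ is an ideal of $A\rtimes_\alpha G$ whenever $I$ is a $G$-invariant ideal of $A$, and that $(I\rtimes_\alpha G)\cap A=I$. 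This already yields surjectivity of $I\mapsto I\cap A$ and the ``in particular'' clause, so the whole statement reduces to injectivity, i.e.\ to proving $J=(J\cap A)\rtimes_\alpha G$ for every ideal $J$ of $A\rtimes_\alpha G$.

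The heart of the argument is the claim that $\E(J)\subseteq J$ for every ideal $J$, and this is where the Rokhlin property enters. Fix $x=\sum_{h\in G}a_hu_h\in J$ and $\epsilon>0$, and apply Definition \ref{def: Rokhlin} to the finite set $F=\{a_h:h\in G\}$ with a small tolerance $\delta$ to obtain mutually orthogonal positive contractions $(r_g)_{g\in G}\subset A$ satisfying (i)--(iii). Since $r_g\in A\subseteq A\rtimes_\alpha G$ and $J$ is an ideal, the element $y:=\sum_{g\in G}r_gxr_g$ lies in $J$. Using $u_hr_g=\alpha_h(r_g)u_h$ one computes
$$y=\sum_{g\in G}\sum_{h\in G}r_ga_h\,\alpha_h(r_g)\,u_h.$$
For $h\neq e$ the near-equivariance $\alpha_h(r_g)\approx r_{hg}$, the \emph{exact} orthogonality $r_gr_{hg}=0$ (as $hg\neq g$), and the near-centrality $r_ga_h\approx a_hr_g$ force these terms to have norm $O(\delta)$; for $h=e$ one has $\sum_{g}r_ga_er_g\approx a_e\sum_{g}r_g^2=a_e\big(\sum_{g}r_g\big)^2\approx a_e$, using near-centrality and condition (iii). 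Hence $\|y-\E(x)\|=\|y-a_e\|=O\big(|G|\,\delta\,\max_h\|a_h\|\big)$, which is $<\epsilon$ for $\delta$ small. As $y\in J$ and $J$ is closed, $\E(x)\in J$.

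Granting $\E(J)\subseteq J$, injectivity follows at once: for $x=\sum_{h}a_hu_h\in J$ and any $h\in G$ we have $xu_h^{-1}\in J$ and $\E(xu_h^{-1})=a_h$, so $a_h\in\E(J)\subseteq J\cap A$; thus $x\in(J\cap A)\rtimes_\alpha G$ and $J=(J\cap A)\rtimes_\alpha G$. Two ideals with the same intersection with $A$ therefore coincide, and the map is a bijection with inverse $I\mapsto I\rtimes_\alpha G$. I expect the only genuine obstacle to be the averaging estimate of the middle paragraph; everything else is bookkeeping special to finite groups. Finally, I note that $\E(J)\subseteq J$ is precisely the separation (intersection) property underlying \cite[Theorem 1.25]{Sierakowski}: if one prefers to invoke that result directly, part (iii) of the previous theorem shows that every quotient action retains the Rokhlin property, which gives the residual intersection property and hence the bijection by exactness of crossed products by finite groups.
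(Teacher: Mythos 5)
Your proposal is correct, and its heart --- averaging $x=\sum_h a_h u_h$ against Rokhlin elements, $\sum_g r_g x r_g \approx a_e$, using near-centrality, near-equivariance, and the exact orthogonality $r_g r_{hg}=0$ for $h\neq e$ --- is precisely the computation in the paper's proof. The difference lies in the packaging. The paper assembles Rokhlin elements for tolerances $\tfrac{1}{n}$ into sequences $r_g=(r_g^{(n)})_n\in A^\infty$, so that all the relations become exact and the identity $\sum_h r_h x r_h = a_e$ holds on the nose in $(A\rtimes_\alpha G)^\infty$; it then concludes $a_e\in\mathrm{Ideal}(x)$ and invokes Sierakowski's Theorem 1.10(ii) to convert this single condition into the bijection. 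You instead run the estimate at the finite level with explicit $O(\delta)$ bookkeeping, phrase the conclusion as $\mathrm{E}(J)\subseteq J$ for the canonical conditional expectation, and then derive the bijection by hand from the fact that, $G$ being finite, every element of $A\rtimes_\alpha G$ is a finite sum $\sum_g a_g u_g$ with $a_g=\mathrm{E}(xu_g^*)\in J$ whenever $x\in J$. What your route buys is self-containedness: no external ideal-structure theorem is needed, and the ``in particular'' clause falls out of the same two lines. What the paper's route buys is brevity and cleanliness: working in the sequence algebra eliminates the epsilon management, and the citation absorbs the reduction to the condition $a_e\in\mathrm{Ideal}(x)$. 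The two conditions are in fact equivalent ($\mathrm{E}(J)\subseteq J$ for every ideal $J$ if and only if $a_e\in\mathrm{Ideal}(x)$ for every $x$), so these are the same argument in different clothing; your closing remark that one could alternatively cite Sierakowski outright essentially recovers the paper's formulation.
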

\begin{proof}
The map above is clearly onto since $(I\rtimes_\alpha G)\cap A=I$ for all $I\in \mathrm{Ideal}(A)^G$. Let us show that it is one-to-one. Let $(u_g)_{g\in G}\subset \M(A\rtimes_\alpha G)$ be the canonical unitaries implementing the action $\alpha$. By \cite[Theorem 1.10(ii)]{Sierakowski} it is enough to show that for $x\in A\rtimes_\alpha G$, with $x=\sum_{g\in G}a_gu_g$, one has $a_e\in \mathrm{Ideal}(x)$.

Let $x=\sum_{g\in G}a_gu_g$ be an element of $A\rtimes_\alpha G$, with $a_g\in A$ for all $g\in G$. Choose Rokhlin elements $(r_g^{(n)})_{g\in G}$, with $n\in \N$, associated to the finite set $F=\{a_g: g\in G\}$ and to the numbers $\frac{1}{n}$, with $n\in \N$. For each $g\in G$ set $r_g=(r_g^{(n)})_{n}$. Then $r_g\in A^\infty\cap F'\subset (A\rtimes_\alpha G)^\infty\cap F'$ for all $g\in G$,
$\alpha_g(r_h)=r_{gh}$ for all $g,h\in G$, and $\sum_{g\in G}r_g a_e=a_e$. It follows that
$$\sum_{h\in G}r_hxr_h= \sum_{g\in G}\sum_{h\in G}r_ha_gu_gr_h=\sum_{g\in G}\sum_{h\in G}\delta_{h, gh}r^2_ha_gu_g=\sum_{h\in G}r_h^2a_e=(\sum_{h\in G}r_h)^2a_e=a_e.$$
Therefore, $\sum_{h\in G}r^{(n)}_hxr^{(n)}_h\to a_e$ as $n\to \infty$. This implies that $a_e\in \mathrm{Ideal}(x)$.
\end{proof}

\begin{corollary}
Let $A$ be a simple C*-algebra and let $\alpha\colon G\to \Aut(A)$ be an action of a finite group $G$ on $A$ with the Rokhlin property. Then $A\rtimes_\alpha G$ is simple.
\end{corollary}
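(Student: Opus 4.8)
The plan is to deduce this immediately from the preceding proposition, which is where all the real work—the use of the Rokhlin property to show $a_e\in\mathrm{Ideal}(x)$ for every $x=\sum_{g\in G}a_gu_g$—has already been carried out. That proposition asserts that the map
\[
\mathrm{Ideal}(A\rtimes_\alpha G)\longrightarrow \mathrm{Ideal}(A)^G,\qquad I\longmapsto I\cap A,
\]
is a bijection. The entire content of the corollary is then extracted by counting how many $G$-invariant ideals $A$ has.

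First I would observe that since $A$ is simple, its only closed two-sided ideals are $\{0\}$ and $A$; both are trivially $G$-invariant, so $\mathrm{Ideal}(A)^G=\{\{0\}, A\}$ has exactly two elements. Because the map above is a bijection, it follows that $A\rtimes_\alpha G$ has exactly two closed two-sided ideals as well. Now $\{0\}$ and $A\rtimes_\alpha G$ are always ideals of $A\rtimes_\alpha G$, and they are distinct: $A$ embeds into $A\rtimes_\alpha G$, so the crossed product is nonzero and hence differs from $\{0\}$. Under the bijection these two ideals are sent to $\{0\}\cap A=\{0\}$ and $(A\rtimes_\alpha G)\cap A=A$, respectively, which are the two distinct elements of $\mathrm{Ideal}(A)^G$. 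Therefore $\{0\}$ and $A\rtimes_\alpha G$ account for both ideals, and $A\rtimes_\alpha G$ has no nontrivial closed two-sided ideals; that is, it is simple.

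There is essentially no obstacle to overcome at this stage: the only thing to verify is that simplicity of $A$ leaves exactly two $G$-invariant ideals, which is automatic. The substantive step—that every ideal $J$ of the crossed product is determined by $J\cap A$, which rests on the Rokhlin relation $\sum_{h\in G}r_h^{(n)}\,x\,r_h^{(n)}\to a_e$—has already been established in the proposition, so the corollary is genuinely a one-line consequence.
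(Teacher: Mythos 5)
Your proof is correct and is exactly the argument the paper intends: the corollary is stated as an immediate consequence of the ideal-lattice bijection from the preceding proposition, and your counting of $G$-invariant ideals (together with the observation that $A\rtimes_\alpha G\neq\{0\}$) is the standard way to extract simplicity from it.
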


\begin{proposition}\label{prop: crossed-fixed}
Let $A$ be C*-algebra and let $\alpha\colon G\to \Aut(A)$ be an action of a finite group $G$ on $A$ with the Rokhlin property. Then the crossed product C*-algebra $A\rtimes_\alpha G$ and the fixed point algebra $A^G$ are strongly Morita equivalent. 
\end{proposition}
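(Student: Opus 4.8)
The plan is to exhibit $A^G$ as a full corner of $A\rtimes_\alpha G$ and then invoke the standard fact that a full corner of a C*-algebra is strongly Morita equivalent to the whole algebra. Write $C:=A\rtimes_\alpha G$ and let $(u_g)_{g\in G}\subset\M(C)$ be the canonical unitaries implementing $\alpha$, so that $u_ga=\alpha_g(a)u_g$ for $a\in A$ and $u_gu_h=u_{gh}$. Set
$$e:=\frac{1}{|G|}\sum_{g\in G}u_g\in\M(C).$$
A one-line computation gives $e^*=e$ and $e^2=e$, so $e$ is a projection, and $u_ke=e=eu_k$ for every $k\in G$. First I would identify the corner: for $a\in A^G$ one has $au_g=u_ga$, so $a$ commutes with $e$ and $ae=eae\in eCe$; conversely, for $x=\sum_h a_hu_h$ one computes $exe=\big(\sum_h E(a_h)\big)e$, where $E(a)=\frac{1}{|G|}\sum_g\alpha_g(a)\in A^G$ is the canonical conditional expectation. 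Hence $eCe=A^Ge$, and $a\mapsto ae$ is a $*$-isomorphism of $A^G$ onto $eCe$ (injectivity follows by applying the canonical conditional expectation $C\to A$). It therefore remains to prove that $e$ is full, i.e. $\overline{CeC}=C$.

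This is the only place the Rokhlin property is used, and it is the main step. Fix $a\in A$ and $\epsilon>0$, and choose Rokhlin elements $(r_g)_{g\in G}\subset A$ for the finite set $\{a\}$ and a small tolerance $\delta>0$. The key computation I would carry out is
$$r_ger_g=\frac{1}{|G|}\sum_{k\in G}r_g\,\alpha_k(r_g)\,u_k\ \approx\ \frac{1}{|G|}\sum_{k\in G}r_g\,r_{kg}\,u_k=\frac{1}{|G|}\,r_g^2,$$
where the middle step is within $\delta$ by condition (i) of Definition \ref{def: Rokhlin}, and the last equality uses that the $r_g$ are \emph{exactly} mutually orthogonal, so $r_gr_{kg}=0$ unless $k=e$. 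Summing over $g$ yields $\sum_{g}r_ger_g\approx\frac{1}{|G|}\big(\sum_gr_g\big)^2$. Since $\big(\sum_gr_g\big)a\approx a$ and $\sum_gr_g$ commutes approximately with $a$ by conditions (ii) and (iii), the element $\big(\sum_gr_g\big)^2$ acts as an approximate unit on $a$, so
$$a\ \approx\ |G|\,a\sum_{g\in G}r_ger_g=|G|\sum_{g\in G}(ar_g)\,e\,r_g\ \in\ CeC.$$
Every error above is bounded by a constant depending only on $|G|$ and $\|a\|$ times $\delta$; choosing $\delta$ small enough gives $\dist(a,CeC)<\epsilon$.

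As $a\in A$ and $\epsilon>0$ were arbitrary, $A\subseteq\overline{CeC}$, so the closed two-sided ideal $\overline{CeC}$ of $C$ satisfies $\overline{CeC}\cap A=A=C\cap A$; by the ideal correspondence of the preceding proposition (the map $I\mapsto I\cap A$ is injective) we conclude $\overline{CeC}=C$, i.e. $e$ is a full projection in $\M(C)$. Consequently $Ce$, viewed as a left $C$-module and right $eCe$-module with inner products ${}_C\langle\xi,\eta\rangle=\xi\eta^*$ and $\langle\xi,\eta\rangle_{eCe}=\xi^*\eta$, is a $C$--$eCe$ imprimitivity bimodule, and therefore $A\rtimes_\alpha G=C$ is strongly Morita equivalent to $eCe\cong A^G$. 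The main obstacle is the fullness of $e$, which is exactly where the \emph{orthogonality} of the Rokhlin elements and their approximate-unit property are indispensable; the remaining corner and imprimitivity-bimodule bookkeeping is routine, the only subtlety being that in the non-unital case $e$ lives in $\M(C)$ rather than in $C$.
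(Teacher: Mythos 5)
Your proof is correct, but it is not the argument the paper gives: the paper's entire proof of Proposition \ref{prop: crossed-fixed} is a citation, invoking the discussion following Lemma 3.1 of \cite{G-L-P} (see also Theorem 5.10 of \cite{Phillips-freeness}), i.e.\ the general theory of saturated actions of compact groups, under which Rokhlin-type freeness implies saturation, and saturation yields the Morita equivalence between $A^G$ and $A\rtimes_\alpha G$. What you have done is essentially reprove, for finite groups and self-containedly, the content of those references: for a finite group, saturation is exactly the statement that the averaging projection $e=\frac{1}{|G|}\sum_{g}u_g\in\M(A\rtimes_\alpha G)$ is full, and your computation with the Rokhlin elements --- using their \emph{exact} mutual orthogonality to kill the terms $r_gr_{kg}$ with $k\neq e$, and their approximate commutation and approximate-unit properties to recover $a$ up to an error controlled by $|G|$, $\|a\|$, and $\delta$ --- is a direct verification of fullness. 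Your corner identification $e(A\rtimes_\alpha G)e=A^Ge\cong A^G$ (with injectivity via the faithful conditional expectation onto $A$) and the full-corner imprimitivity bimodule $Ce$ are standard and correctly handled, including the non-unital subtlety that $e$ lives in $\M(C)$ rather than in $C$. Your appeal to the ideal-correspondence proposition is legitimate, since it is proved in the paper before this one; note, though, that you could bypass it entirely: any closed two-sided ideal of $C$ containing $A$ contains an approximate unit of $A$, which is also an approximate unit of $C$, hence equals $C$. The trade-off between the two routes: the paper's citation is short and rests on machinery valid for general compact group actions, whereas your argument is elementary, stays entirely within the tools developed in the paper (Definition \ref{def: Rokhlin} and the ideal structure of the crossed product), and makes explicit precisely where the Rokhlin data enters the Morita equivalence.
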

\begin{proof}
This is a consequence of the discussion after \cite[Lemma 3.1]{G-L-P} (see also  \cite[Theorem 5.10]{Phillips-freeness}). 
\end{proof}

\begin{corollary}
Let $A$ be C*-algebra and let $\alpha\colon G\to \Aut(A)$ be an action of a finite group $G$ on $A$ with the Rokhlin property. Then $\Cu(A^G)$ and $\Cu(A\rtimes_\alpha G)$ are naturally isomorphic.
\end{corollary}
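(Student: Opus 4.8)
The plan is to deduce the corollary from Proposition~\ref{prop: crossed-fixed} together with the fact that the Cuntz semigroup is an invariant for strong Morita equivalence. By Proposition~\ref{prop: crossed-fixed} there is an $(A\rtimes_\alpha G)$--$A^G$ imprimitivity bimodule $X$ implementing the Morita equivalence between $A\rtimes_\alpha G$ and $A^G$, and the whole point is to manufacture from $X$ a \emph{natural} isomorphism $\Cu(A^G)\cong\Cu(A\rtimes_\alpha G)$ that respects the order and the passage to suprema in $\CCu$.

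Concretely, I would pass to the linking algebra
\[
L=\begin{pmatrix} A\rtimes_\alpha G & X\\ X^* & A^G\end{pmatrix}.
\]
Writing $p\in\M(L)$ for the corner projection, one has $pLp\cong A\rtimes_\alpha G$ and $(1-p)L(1-p)\cong A^G$, and because $X$ is an imprimitivity bimodule both corners are \emph{full} hereditary C*-subalgebras of $L$. The key step is then the (well-known) statement that the inclusion of a full hereditary C*-subalgebra $B\hookrightarrow C$ induces an isomorphism $\Cu(B)\xrightarrow{\ \cong\ }\Cu(C)$, the map being $[b]\mapsto[b]$ at the level of positive elements of $B\otimes\K\subseteq C\otimes\K$. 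Applying this to the two corner inclusions and composing gives $\Cu(A^G)\cong\Cu(L)\cong\Cu(A\rtimes_\alpha G)$. Since each arrow is induced by an inclusion of C*-algebras and $\Cu$ is a functor, the resulting isomorphism is natural (it is precisely the $\Cu$-map attached to the bimodule $X$).

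The main obstacle is that $A$—hence $A^G$, $A\rtimes_\alpha G$, and $L$—need not be $\sigma$-unital, so one cannot simply invoke the Brown--Green--Rieffel theorem to replace the Morita equivalence by a stable isomorphism and then appeal to the stability identity $\Cu(D)\cong\Cu(D\otimes\K)$. To bypass this I would establish the full-hereditary-subalgebra isomorphism intrinsically: surjectivity uses fullness of $B$ in $C$ to Cuntz-dominate any $c\in(C\otimes\K)_+$ by finite sums of elements of $B\otimes\K$ (invoking Lemma~\ref{lem: Cuntz subequivalence} to move between $(c-\epsilon)_+$ and honest representatives), while injectivity uses that $B\otimes\K$ is hereditary in $C\otimes\K$, so that a subequivalence witnessed in $C\otimes\K$ can be rewritten with elements of $B\otimes\K$. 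Equivalently, and more cleanly, one may work in the Coward--Elliott--Ivanescu picture of $\Cu$ via countably generated Hilbert modules, in which the interior-tensor-product functors $-\otimes_{A^G}\widetilde{X}$ and $-\otimes_{A\rtimes_\alpha G}X$ (here $\widetilde{X}$ denotes the dual correspondence) are mutually inverse, order-preserving equivalences between the categories of Hilbert modules over $A^G$ and over $A\rtimes_\alpha G$; this renders both Morita invariance and naturality transparent for arbitrary C*-algebras and removes any $\sigma$-unitality hypothesis.
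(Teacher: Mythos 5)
Your proposal is correct and takes essentially the same route as the paper: the paper likewise invokes Proposition \ref{prop: crossed-fixed} and then passes to Rieffel's linking algebra, in which $A\rtimes_\alpha G$ and $A^G$ sit as full hereditary C*-subalgebras, concluding $\Cu(A\rtimes_\alpha G)\cong \Cu(B)\cong \Cu(A^G)$. The only difference is one of detail: the paper leaves the invariance of $\Cu$ under inclusion of full hereditary subalgebras (and the attendant $\sigma$-unitality concerns) to the citation of Rieffel, whereas you spell out how to justify it.
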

\begin{proof}
By Proposition \ref{prop: crossed-fixed} the crossed product C*-algebra $A\rtimes_\alpha G$ and the fixed point algebra $A^G$ are strongly Morita equivalent. Hence, there exists a canonical C*-algebra $B$ containing copies of $A\rtimes_\alpha G$ and $A^G$ as full hereditary C*-subalgebras (\cite[page 288]{Rieffel Moritaeq}). It follows that $\Cu(A\rtimes_\alpha G)\cong \Cu(B)\cong \Cu(A^G)$.  
\end{proof}

\section{Local approximation of $A\rtimes_\alpha G$}
In this section we show that the crossed product C*-algebra by an action of a finite group on a C*-algebra with the Rokhlin property can be locally approximated by matrix algebras over hereditary C*-subalgebras of the given C*-algebra.


Let $(f_{i,j})_{i,j=1}^d$ denote the set of matrix units of $\M_d(\C)$.
The following result is well known. We include the proof for the convenience of the reader.

\begin{lemma}\label{lem: matrix units}
Let $A$ be a C*-algebra, let $\phi\colon \mathrm{C}_0(0,1]\otimes \M_d(\C)\to A$ be a *-homomorphism, and let $b=\phi(t\otimes 1_{\M_d})$ and $c=\phi(t\otimes f_{1,1})$. Then the map $\psi\colon \M_d(cAc)\to \overline{bAb}$ defined by
$$\psi(\sum_{i,j=1}^d(cx_{i,j}c)\otimes f_{i,j}):=\sum_{i,j=1}^d\phi(t\otimes f_{i,1})x_{i,j}\phi(t\otimes f_{1,j}),$$
extends to a *-isomorphism $\widetilde\psi\colon \M_d(\overline{cAc})\to \overline{bAb}$.
\end{lemma}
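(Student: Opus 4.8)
The plan is to route everything through the elements $w_{i,j}:=\phi(t\otimes f_{i,j})\in A$, where $t$ is the canonical generator of $\mathrm{C}_0(0,1]$. These satisfy $w_{i,j}^{\,*}=w_{j,i}$ and $w_{i,j}w_{k,l}=\delta_{j,k}\,\phi(t^2\otimes f_{i,l})$; in particular $c=w_{1,1}$, $b=\sum_{i}w_{i,i}$, and $w_{1,j}w_{k,1}=\delta_{j,k}c^2$. First I would check that the formula genuinely defines a map on $\M_d(cAc)$, i.e.\ that $w_{i,1}xw_{1,j}$ depends on $x$ only through $cxc$. Fix contractions $0\le h_n\le 1$ in $\mathrm{C}_0(0,1]$ with $h_nt\to t$ in norm; then $\phi(h_n\otimes f_{i,1})\,c=\phi((h_nt)\otimes f_{i,1})\to w_{i,1}$ and similarly $c\,\phi(h_m\otimes f_{1,j})\to w_{1,j}$, whence
$$w_{i,1}\,x\,w_{1,j}=\lim_{n,m}\phi(h_n\otimes f_{i,1})\,(cxc)\,\phi(h_m\otimes f_{1,j}),$$
which manifestly depends only on $cxc$. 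The same factorisation gives $w_{i,1}=\lim_n b^{1/2}\phi(h_n\otimes f_{i,1})b^{1/2}\in\overline{bAb}$, and likewise $w_{1,j}\in\overline{bAb}$, so by heredity the range of $\psi$ is contained in $\overline{bAb}$.

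Next I would verify that $\psi$ is a $*$-homomorphism on $\M_d(cAc)$ by direct computation. The $*$-compatibility is immediate from $w_{i,j}^{\,*}=w_{j,i}$, and multiplicativity follows from $w_{1,j}w_{k,1}=\delta_{j,k}c^2$: for $Y=\sum(cx_{i,j}c)\otimes f_{i,j}$ and $Z=\sum(cz_{k,l}c)\otimes f_{k,l}$ both $\psi(Y)\psi(Z)$ and $\psi(YZ)$ collapse to $\sum_{i,j,l}w_{i,1}x_{i,j}c^2z_{j,l}w_{1,l}$.

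The main point is the extension of the algebraic map $\psi$ on the dense $*$-subalgebra $\M_d(cAc)$ to a genuine $*$-isomorphism on the completion $\M_d(\overline{cAc})$. I would obtain this by realising $\psi$ as the restriction of an isomorphism built from honest matrix units. Set $D:=\overline{bAb}$. Since $b=\phi(t\otimes 1_{\M_d})$ is strictly positive both in $\overline{\phi(\mathrm{C}_0(0,1]\otimes\M_d)}$ and in $D$, the image of $\phi$ lies in $D$ and $\phi$ is nondegenerate as a map into $D$, so it extends to $\bar\phi\colon\M(\mathrm{C}_0(0,1]\otimes\M_d)\to\M(D)$. The elements $u_{i,j}:=\bar\phi(1\otimes f_{i,j})$ are then genuine matrix units in $\M(D)$ with $\sum_i u_{i,i}=1_{\M(D)}$, and one checks $c=u_{1,1}b=bu_{1,1}$ together with $u_{1,1}Du_{1,1}=\overline{cAc}$ (the element $c$ is strictly positive in the corner). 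The assignment
$$\widetilde\psi\big((y_{i,j})_{i,j}\big):=\sum_{i,j=1}^{d}u_{i,1}\,y_{i,j}\,u_{1,j},\qquad y_{i,j}\in\overline{cAc},$$
is then the standard $*$-isomorphism of $\M_d(u_{1,1}Du_{1,1})$ onto $D$, with inverse $d\mapsto(u_{1,i}\,d\,u_{j,1})_{i,j}$; surjectivity uses $\sum_i u_{i,i}=1_{\M(D)}$. Finally $u_{i,1}c=w_{i,1}$ and $c\,u_{1,j}=w_{1,j}$ show that $\widetilde\psi$ restricts to $\psi$ on $\M_d(cAc)$, which is the desired extension.

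I expect the extension step to be the only real obstacle: a $*$-homomorphism out of a merely dense $*$-subalgebra need not be bounded, so one cannot extend $\psi$ ``by continuity'' before identifying it with an actual C*-algebra morphism. Passing to the multiplier algebra to manufacture the matrix units $u_{i,j}$ is what circumvents this, and the two verifications I would treat with care are the nondegeneracy of $\phi$ into $\overline{bAb}$ (so that $\bar\phi$ exists) and the corner identification $\overline{cAc}=u_{1,1}\overline{bAb}\,u_{1,1}$; injectivity and surjectivity of $\widetilde\psi$ are then read off from the explicit inverse and from $\sum_i u_{i,i}=1_{\M(D)}$.
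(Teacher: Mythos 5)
Your proof is correct, but it takes a genuinely different route from the paper's. The paper attacks boundedness head-on: it shows that $\psi$ is \emph{isometric} on $\M_d(cAc)$ by sandwiching with the contraction matrices $A_n=\bigl(\delta_{i,1}\phi(t^{1/n}\otimes f_{i,j})\bigr)_{i,j}$ and computing $\lim_n\|A_n^*\left(cx_{i,j}c\right)_{i,j}A_n\|$ in both directions; the isometry simultaneously settles well-definedness and allows the extension to $\M_d(\overline{cAc})$ by continuity, after which surjectivity is deduced from $\widetilde\psi(c^3\otimes 1_{\M_d})=b^3$ and functional calculus. You instead sidestep the continuity problem---which you correctly identify as the crux, since a *-homomorphism on a merely dense *-subalgebra of a C*-algebra need not be bounded---by never extending at all: you build the isomorphism globally, extending the nondegenerate map $\phi$ into $D=\overline{bAb}$ (nondegeneracy coming from $bD$ dense in $D$, as $b$ is strictly positive there) to $\bar\phi\colon\M(\CC_0(0,1]\otimes\M_d(\C))\to\M(D)$, extracting honest matrix units $u_{i,j}=\bar\phi(1\otimes f_{i,j})$ with $\sum_iu_{i,i}=1_{\M(D)}$, identifying the corner $u_{1,1}Du_{1,1}=\overline{cAc}$, and invoking the standard isomorphism $\M_d(u_{1,1}Du_{1,1})\cong D$ whose explicit inverse $d\mapsto(u_{1,i}\,d\,u_{j,1})_{i,j}$ hands you injectivity and surjectivity for free; the identities $u_{i,1}c=\phi(t\otimes f_{i,1})$ and $cu_{1,j}=\phi(t\otimes f_{1,j})$ then exhibit $\psi$ as the restriction. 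What the paper's route buys is self-containedness: only norm estimates inside $A$, with no multiplier algebras or nondegenerate-extension theory. What your route buys is conceptual economy: the isometry computation disappears, bijectivity is immediate rather than split into an isometry argument plus a (rather terse, in the paper) surjectivity step, and it makes transparent that the lemma is the usual full-corner isomorphism in disguise. Your treatment of the two delicate points---nondegeneracy of $\phi$ into $\overline{bAb}$, and the corner identification via $u_{1,1}\,b\,D\,b\,u_{1,1}=cDc$ dense in $u_{1,1}Du_{1,1}$ together with $\overline{cDc}=\overline{cAc}$---is sound.
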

\begin{proof}
The map $\psi$ is clearly additive. Also, using that $(f_{i,j})_{i,j=1}^d$ is a set of matrix units of $\M_d(\C)$ it is easy to check that $\psi$ is multiplicative and that it preserves the adjoint operation.

Now note that the matrix $A_n=\left(\delta_{i,1}\phi(t^\frac{1}{n}\otimes f_{i,j})\right)_{1\le i,j\le d}$, where $\delta_{i,j}$ denotes Kronecker's delta function, satisfies
$$\|A_n\|=\|A_n^*A_n\|^\frac{1}{2}=\|\sum_{i=1}^d\phi(t^\frac{2}{n}\otimes f_{i,i})\|^\frac{1}{2}\le 1.$$
Hence, we have
\begin{align*}
\|\psi(\sum_{i,j=1}^n(cx_{i,j}c)\otimes f_{i,j})\|&=\|\sum_{i,j=1}^n\phi(t\otimes f_{i,1})x_{i,j}\phi(t\otimes f_{1,j})\|\\
&=\lim_{n\to \infty}\|\sum_{i,j=1}^n\phi(t^\frac{1}{n}\otimes f_{i,1})cx_{i,j}c\phi(t^\frac{1}{n}\otimes f_{1,j})\|\\
&=\lim_{n\to \infty}\|A_n^*\left(cx_{i,j}c\right)_{i,j}A_n\|\\
&\le \|\left(cx_{i,j}c\right)_{i,j}\|\\
&= \|\sum_{i,j=1}^n(cx_{i,j}c)\otimes f_{i,j}\|,
\end{align*} 
and
\begin{align*}
\|\sum_{i,j=1}^n(cx_{i,j}c)\otimes f_{i,j}\|&=\|\left(cx_{i,j}c\right)_{i,j}\|\\
&=\lim_{n\to \infty}\|A_n^t\left(\phi(t\otimes f_{i,1})x_{i,j}\phi(t\otimes f_{1,j})\right)_{i,j}(A_n^t)^*\|\\
&=\lim_{n\to \infty}\|\left(\phi(t^{1+\frac 1 n}\otimes f_{1,1})x_{i,j}\phi(t^{1+\frac 1 n}\otimes f_{1,1})\right)_{i,j}\|\\
&=\lim_{n\to \infty}\|A_n(\sum_{i,j=1}^d\phi(t\otimes f_{i,1})x_{i,j}\phi(t\otimes f_{1,j}))A_n^*\|\\
&\le \|\sum_{i,j=1}^d\phi(t\otimes f_{i,1})x_{i,j}\phi(t\otimes f_{1,j})\|\\
=&\|\psi(\sum_{i,j=1}^n(cx_{i,j}c)\otimes f_{i,j})\|,
\end{align*}
for all $x_{i,j}\in A$. Therefore, $\psi$ is an isometry and in particular it is well defined. Now using that $\M_d(cAc)$ is dense in $\M_d(\overline{cAc})$ we can extend $\psi$ by continuity to an isometric *-homomorphism $\widetilde\psi\colon \M_d(\overline{cAc})\to \overline{bAb}$. Note that
$$\widetilde\psi(c^3\otimes 1_{\M_d})=\psi(c^3\otimes 1_{\M_d})=\sum_{i=1}^d\phi(t^3\otimes f_{i,i})=b^3.$$
Using that $\widetilde\psi$ preserves functional calculus we get $\widetilde\psi(c)=b$. This implies that $\widetilde\psi$ is surjective. Therefore, it is a *-isomorphism.
\end{proof}

\begin{lemma}\label{lem: MvN}
Let $A$ be a C*-algebra and let $0<\epsilon<1$. Let $a,b\in A_+$ and $x\in A$ be such that 
$$\|a-x^*x\|<\epsilon, \quad \|b-xx^*\|<\epsilon.$$
Then there exists $y\in A$ such that
$$(a-9\epsilon^{\frac 1 2})_+=y^*y,\quad yy^*\le (b-\epsilon)_+, \quad \|y-x\|<16\epsilon^\frac 1 4.$$
\end{lemma}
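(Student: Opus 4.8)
The plan is to reduce everything to the positive element $c:=x^{*}x$, for which $\|a-c\|<\epsilon$, and to build $y$ by pinning down the right‑hand support \emph{exactly} with a Rørdam‑type comparison and then sharpening the cut‑off by functional calculus. Two ingredients are in tension and must be reconciled: exactness of $y^{*}y$ forces a non functional‑calculus device (the spectra of $a$ and $c$ do not commute, so no function of $c$ can equal $(a-9\epsilon^{1/2})_{+}$), while the norm bound $\|y-x\|<16\epsilon^{1/4}$ forces $y$ to share the phase of $x$.

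First I would invoke the exact comparison lemma behind Lemma~\ref{lem: Cuntz subequivalence} (\cite{Kirchberg-Rordam}): since $\|a-c\|<\epsilon$ there is a contraction $d\in A$ with $dcd^{*}=(a-\epsilon)_{+}$. Put $y_{0}:=xd^{*}$, so that $y_{0}^{*}y_{0}=d\,x^{*}x\,d^{*}=(a-\epsilon)_{+}$ and $y_{0}y_{0}^{*}=xd^{*}dx^{*}\le xx^{*}$. Now set $\theta:=9\epsilon^{1/2}-\epsilon>0$ and $y:=y_{0}\,\ell(y_{0}^{*}y_{0})$, where $\ell(s)=(s-\theta)_{+}^{1/2}s^{-1/2}$ is continuous with $\ell(0)=0$ and $0\le\ell\le1$. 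Then $y\in A$ (it is $y_{0}$ times a continuous function of $y_{0}^{*}y_{0}$ vanishing at $0$), and using $\ell(s)^{2}s=(s-\theta)_{+}$ together with the nesting identity $\big((t-\epsilon)_{+}-\theta\big)_{+}=(t-9\epsilon^{1/2})_{+}$ one obtains the exact equality
\[
y^{*}y=\ell(y_{0}^{*}y_{0})\,y_{0}^{*}y_{0}\,\ell(y_{0}^{*}y_{0})=\big((a-\epsilon)_{+}-\theta\big)_{+}=(a-9\epsilon^{1/2})_{+}.
\]
Intertwining $y_{0}\,\phi(y_{0}^{*}y_{0})=\phi(y_{0}y_{0}^{*})\,y_{0}$ gives the convenient formula $yy^{*}=(y_{0}y_{0}^{*}-\theta)_{+}$, with $y_{0}y_{0}^{*}\le xx^{*}$.

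For the norm I would compare $y$ to $x$ through their common phase: once $d$ is seen to be (nearly) the aligned partial isometry, $\|y-x\|$ is controlled by $\|(a-9\epsilon^{1/2})_{+}^{1/2}-c^{1/2}\|$, which I split as
\[
\|(a-9\epsilon^{1/2})_{+}^{1/2}-a^{1/2}\|+\|a^{1/2}-c^{1/2}\|.
\]
The first term is a scalar estimate on $a$, bounded by $\sup_{t\ge0}\big(t^{1/2}-(t-9\epsilon^{1/2})_{+}^{1/2}\big)\le 3\epsilon^{1/4}$, and the second is at most $\|a-c\|^{1/2}<\epsilon^{1/2}$ by Hölder continuity of the square root; since $\epsilon<1$ this yields $<4\epsilon^{1/4}<16\epsilon^{1/4}$, and the passage from $\epsilon^{1/2}$ to $\epsilon^{1/4}$ is exactly the square‑root loss that dictates the exponent.

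The hard part will be the operator inequality $yy^{*}\le(b-\epsilon)_{+}$, because $t\mapsto(t-\theta)_{+}$ is \emph{not} operator monotone, so one cannot just feed $y_{0}y_{0}^{*}\le xx^{*}\le b+\epsilon$ into $yy^{*}=(y_{0}y_{0}^{*}-\theta)_{+}$. The decisive feature is that the large gap $9\epsilon^{1/2}$ dwarfs both perturbation errors $\|a-c\|<\epsilon$ and $\|b-xx^{*}\|<\epsilon$: $yy^{*}$ is concentrated where $xx^{*}$, and hence $b$, exceeds $8\epsilon^{1/2}>\epsilon$, so on its support $(b-\epsilon)_{+}=b-\epsilon$ and there is genuine room to spare. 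I would make this rigorous by transporting the cut through the polar part of $x$ and applying the exact Kirchberg–Rørdam comparison once more on the $b$‑side, so as to realise the domination as an honest inequality rather than a mere Cuntz comparison; the same argument is what shows that the correcting contraction $d$ is close to the aligned phase, thereby justifying the norm estimate above. Keeping the constants $9\epsilon^{1/2}$ and $\epsilon$ coherent through these steps is where the bookkeeping, and the real difficulty, lies.
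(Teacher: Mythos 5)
Your construction gets the exact equality $y^*y=(a-9\epsilon^{1/2})_+$ correctly (Kirchberg--R{\o}rdam's Lemma 2.2 does produce a contraction $d$ with $dcd^*=(a-\epsilon)_+$, and your functional-calculus cut-off is sound), but the two properties that make the lemma nontrivial are left unproven, and your proposed repairs would not close them. First, the domination $yy^*\le (b-\epsilon)_+$: you correctly observe that $(t-\theta)_+$ is not operator monotone, so $y_0y_0^*\le xx^*\le b+\epsilon$ gives you nothing, and your substitute --- ``$yy^*$ is concentrated where $b$ exceeds $8\epsilon^{1/2}$, so on its support $(b-\epsilon)_+=b-\epsilon$'' --- is a support-projection heuristic; support projections live in $A^{**}$, do not commute with $b$, and such ``room to spare'' arguments do not convert into operator inequalities in $A$. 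Second, the norm bound $\|y-x\|<16\epsilon^{1/4}$: your reduction to $\|(a-9\epsilon^{1/2})_+^{1/2}-c^{1/2}\|$ presupposes that $y$ and $x$ share the same polar part, i.e.\ that $d$ is ``aligned.'' But the contraction $d$ in Lemma 2.2 is far from unique: if $u$ is any unitary commuting with $c$ (e.g.\ $u=-1$ in the unitization), then $d$ can be replaced by $du^*$, and then $\|xd^*-x\|$ is of order $\|x\|$, destroying any estimate in terms of $\epsilon$. So both the inequality and the norm control hinge on producing a \emph{specific, aligned} $d$, which is never done.

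What you are missing is exactly the paper's key tool, \cite[Proposition 1]{Robert-Santiago}: given $\|a-x^*x\|<\epsilon$, there exists $y$ with $y^*y=(a-\epsilon)_+$, $yy^*\le xx^*$, \emph{and} $\|y-x\|<4\epsilon^{1/2}$ --- i.e.\ precisely the package (exact equality on one side, honest domination on the other, norm control) that your sketch tries to assemble by hand; its proof is essentially the careful polar-decomposition argument you defer. Granting that proposition, the paper's proof is short: apply it first to $b$ and $x^*$ to get $z$ with $zz^*=(b-\epsilon)_+$, $z^*z\le x^*x$, $\|z-x\|<4\epsilon^{1/2}$; note that then $\|a-z^*z\|<\epsilon+8\epsilon^{1/2}\le 9\epsilon^{1/2}$; apply it again to $a$ and $z$ to get $y$ with $y^*y=(a-9\epsilon^{1/2})_+$, $yy^*\le zz^*=(b-\epsilon)_+$, $\|y-z\|<12\epsilon^{1/4}$; and finish with the triangle inequality. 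Notice how the second application automatically lands $yy^*$ under $(b-\epsilon)_+$ because $zz^*$ \emph{equals} $(b-\epsilon)_+$ --- this ordering of the two applications (exactness on the $b$-side first, cut on the $a$-side second) is the idea your outline lacks, and it is what makes the constants $9\epsilon^{1/2}$ and $16\epsilon^{1/4}$ come out with no hard analysis left over.
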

\begin{proof}
Let $a$, $b$, and $x$ be as in the statement of the lemma. Then by \cite[Proposition 1]{Robert-Santiago} there exists $z\in A$ such that 
$$(b-\epsilon)_+=zz^*, \quad z^*z\le x^*x,\quad \|z-x\|<4 \epsilon^{\frac 1 2}.$$
(A straightforward computation shows that \cite[Proposition 1]{Robert-Santiago} holds for $C = 4$.) 
In particular, we have
$$\|a-z^*z\|\le \|a-x^*x\|+\|(x-z)^*x\|+\|z^*(x-z)\|< \epsilon+4\epsilon^{\frac 1 2}+4\epsilon^{\frac 1 2}\le 9\epsilon^{\frac 1 2}.$$
By applying again \cite[Proposition 1]{Robert-Santiago}, this times to the elements $a$ and $z$, we get an element $y\in A$ satisfying
$$(a-9\epsilon^{\frac 1 2})_+=y^*y, \quad yy^*\le zz^*=(b-\epsilon)_+, \quad \|y-z\|<12\epsilon^\frac{1}{4}.$$
A simple computation now shows that $\|y-x\|<16\epsilon^\frac 1 4$.
\end{proof}

In order to prove the main theorem of this section we need the following lifting result. This is a slight refinement of \cite[Theorem 10.2.2]{Loring} for quotients of the form $\ell_\infty(\N, A)/\mathrm{c}_0(\N, A)$.

\begin{proposition}\label{prop: lifting}
Let $A$ be a C*-algebra and let $0<\epsilon<1$. Given a *-homomorphism $\phi\colon \M_d(\CC_0(0,1])\to A^\infty$ and orthogonal positive contractions $h_1, h_2,\cdots, h_d\in \ell_\infty(\N, A)$ such that $[h_i]=\phi(t\otimes f_{i,i})$ for all $i\in \N$, there exists a *-homomorphism $\psi\colon\M_d(\CC_0(0,1])\to \ell_\infty(\N, A)$ that lifts $\phi$ and that satisfies
$$\psi(t\otimes f_{1,1})\in \CC^*(h_1), \quad \psi(t\otimes f_{i,i})\le h_i.$$ 
\end{proposition}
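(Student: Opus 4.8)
The plan is to build the lifting $\psi$ in two stages, reducing the problem to lifting a single positive contraction (the ``diagonal'' generator) and then manufacturing the full set of matrix units around it. First I would record what data $\phi$ gives us: a $*$-homomorphism from $\M_d(\CC_0(0,1])\cong \CC_0(0,1]\otimes \M_d(\C)$ is determined by the image $e=\phi(t\otimes 1_{\M_d})$ together with a system of partial isometries lifting the matrix units, but it is cleanest to think of it via the generator $g=\phi(t\otimes f_{1,1})$ and the elements $v_i=\phi(t^{1/2}\otimes f_{i,1})$, which satisfy $v_i^*v_j=\delta_{ij}\,\phi(t\otimes f_{1,1})$ and $v_iv_i^*=\phi(t\otimes f_{i,i})=[h_i]$. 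The first step is to replace the given lifts $h_i$ by a \emph{coherent} system. We already have $\psi(t\otimes f_{1,1})$ forced to live in $\CC^*(h_1)$, so set $k_1:=h_1$ and define $\psi(t\otimes f_{1,1})$ using functional calculus on $h_1$ once we have matched norms.

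The second step is the heart of the construction: lifting the off-diagonal elements. Choose arbitrary contractive lifts $(w_{i,n})_n\in\ell_\infty(\N,A)$ of the $v_i$ for $2\le i\le d$. Then $[w_i^*w_i]=[h_1]$ and $[w_iw_i^*]=[h_i]$, so along the sequence the defects $\|w_{i,n}^*w_{i,n}-h_{1,n}\|$ and $\|w_{i,n}^*w_{i,n}-h_{i,n}\|$ (appropriately interpreted, after replacing $h_1$ by a power to absorb the $t^{1/2}$) tend to $0$. This is exactly the hypothesis of Lemma \ref{lem: MvN}: for each $n$ with the defect smaller than some $\epsilon_n\to 0$, that lemma produces $y_{i,n}\in A$ with $(h_{1,n}-9\epsilon_n^{1/2})_+=y_{i,n}^*y_{i,n}$, $y_{i,n}y_{i,n}^*\le (h_{i,n}-\epsilon_n)_+\le h_{i,n}$, and $\|y_{i,n}-w_{i,n}\|<16\epsilon_n^{1/4}$. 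The key points are that the norm estimate forces $[y_i]=v_i$ (so we do not disturb the class being lifted), the first identity makes all the $y_{i,n}^*y_{i,n}$ share the \emph{same} element $(h_{1,n}-9\epsilon_n^{1/2})_+\in \CC^*(h_1)$, and the inequality $y_{i,n}y_{i,n}^*\le h_{i,n}$ is precisely the constraint $\psi(t\otimes f_{i,i})\le h_i$ demanded by the conclusion. One must run Lemma \ref{lem: MvN} with a single $\epsilon_n$ uniform over the finitely many indices $i$, which is harmless since $d$ is fixed.

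The third step assembles the homomorphism. Having a common ``source projection'' $p_n:=(h_{1,n}-9\epsilon_n^{1/2})_+$ with $y_{i,n}^*y_{i,n}=p_n$ for all $i$ (taking $y_{1,n}$ to be $p_n^{1/2}$ itself), one defines the candidate partial isometries and then the matrix units $\psi(t\otimes f_{i,j})$ in the standard way, e.g.\ via polar-type data built out of the $y_{i,n}$ and functional calculus in $\CC^*(h_{1,n})$, exactly mirroring the algebra used in Lemma \ref{lem: matrix units}. Because each relation in $\M_d(\CC_0(0,1])$ is a continuous $*$-polynomial identity in the generators and all the defects vanish as $n\to\infty$, the resulting $\psi(\,\cdot\,)=([\,\cdot\,]_n)_n$ is a genuine $*$-homomorphism into $\ell_\infty(\N,A)$ (not merely approximately so) satisfying the two displayed constraints, and $[\psi(\,\cdot\,)]=\phi$ by the norm estimates from Lemma \ref{lem: MvN} together with $[h_i]=\phi(t\otimes f_{i,i})$. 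I expect the main obstacle to be the bookkeeping in the third step: ensuring that the off-diagonal entries $\psi(t\otimes f_{i,j})$ for $i,j\ge 2$ built from the $y_{i,n}$ satisfy the multiplicativity relations \emph{exactly} in each coordinate $n$, rather than only asymptotically, which is what is needed to land in $\ell_\infty(\N,A)$ as an honest homomorphism. This is resolved by defining $\psi(t\otimes f_{i,j})$ directly as $y_{i,n}\,p_n^{-1}\,y_{j,n}^*$-type expressions interpreted through functional calculus on $p_n$ (so that the relations become formal consequences of $y_{i,n}^*y_{i,n}=p_n$), at the cost of a short verification that these expressions lie in $A$ and are contractive.
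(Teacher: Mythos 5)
Your first two steps reproduce the paper's argument: lift the column entries of $\phi$ arbitrarily, then apply Lemma \ref{lem: MvN} coordinatewise (with a tolerance $\epsilon_n\to 0$ obtained after passing to a subsequence) to replace the lifts by elements $y_{i,n}$ whose ``sources'' $y_{i,n}^*y_{i,n}$ all equal one common element $p_n\in \CC^*(h_{1,n})$ and whose ``ranges'' satisfy $y_{i,n}y_{i,n}^*\le h_{i,n}$, with $\|y_{i,n}-w_{i,n}\|\to 0$ so that the class being lifted is undisturbed. The gap is in your third step, and it is a genuine one as written: the relations needed to assemble a $*$-homomorphism out of the $y_{i,n}$ are \emph{not} ``formal consequences of $y_{i,n}^*y_{i,n}=p_n$''. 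You also need the exact orthogonality relations $y_{i,n}^*y_{j,n}=0$ for $i\neq j$ (and $y_{i,n}y_{j,n}=0$ for $i,j\ge 2$), and these do not follow from the common-source identity alone: take $y_{2,n}=y_{3,n}=p_n^{1/2}$, which satisfies $y_{i,n}^*y_{i,n}=p_n$ for $i=2,3$ but has $y_{2,n}^*y_{3,n}=p_n\neq 0$. In your matrix-unit construction this is exactly the point where multiplicativity breaks: writing $y_{i,n}=u_{i,n}p_n^{1/2}$ for the polar decomposition, the expressions $u_{i,n}f(p_n)u_{j,n}^*$ multiply like matrix units only if $u_{j,n}^*u_{k,n}=\delta_{jk}\cdot(\text{support projection of }p_n)$, and the off-diagonal vanishing here is equivalent to $y_{j,n}^*y_{k,n}=0$, the very relation you never establish.

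The missing relations are in fact true in your construction, and the hypothesis needed to prove them is the one piece of data you never invoke: the assumed orthogonality of $h_1,\dots,h_d$. Since $y_{i,n}y_{i,n}^*\le h_{i,n}$, each $y_{i,n}$ lies in $\overline{h_{i,n}A}$, and since $y_{i,n}^*y_{i,n}=p_n\in \CC^*(h_{1,n})$, each $y_{i,n}$ lies in $\overline{Ah_{1,n}}$; hence $h_{i,n}h_{j,n}=0$ forces $y_{i,n}^*y_{j,n}=0$ for $i\neq j$ and $y_{i,n}y_{j,n}=0$ for $i,j\ge 2$. This is precisely how the paper proceeds. Moreover, once these exact coordinatewise relations are in hand, the paper avoids your hand-made matrix-unit bookkeeping altogether by invoking the universal property of $\M_d(\CC_0(0,1])$ (\cite[Proposition 3.3.1]{Loring}, the same result used in the proof of Theorem \ref{thm: local approx}): it yields $*$-homomorphisms $\psi_n$ with $\psi_n(t\otimes f_{i,1})=y_{i,n}$, after which $\psi(t\otimes f_{1,1})\in\CC^*(h_1)$, $\psi(t\otimes f_{i,i})\le h_i$ (via operator monotonicity of the square root), and the lifting property follow just as you indicate. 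So the skeleton of your proposal is the paper's proof, but step three rests on a false claim; add the orthogonality derivation, and then Loring's proposition is the cleaner way to finish.
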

\begin{proof}
Let $\phi\colon \M_d(\CC_0(0,1])\to A^\infty$ and $h_1=(h_{1,n}), h_2=(h_{2,n}),\cdots, h_d=(h_{d,n})\in \ell_\infty(\N, A)$ be as in the statement of the proposition. For each $1\le i\le d$ let $x_i=(x_{i,n})\in \ell_\infty(\N, A)$ be a lifting of $\phi(t\otimes f_{i,1})$. Then
$$\|h_{1,n}^2-x_{i,n}^*x_{i,n}\|\to 0,\quad \|h_{i,n}^2-x_{i,n}x_{i,n}^*\|\to 0,$$
as $n\to \infty$. By passing to a subsequence and relabeling we may assume that 
$$\|h_{1,n}^2-x_{i,n}^*x_{i,n}\|<\frac 1 n,\quad \|h_{i,n}^2-x_{i,n}x_{i,n}^*\|<\frac 1 n,$$
for all $1\le i\le d$ and $n\in \N$. Using these inequalities and Lemma \ref{lem: MvN} we can find elements $y_{i,n}$ such that
$$y_{i,n}^*y_{i,n}=(h_{1,n}-9\epsilon^{-\frac 1 2})_+, \quad y_{i,n}y_{i,n}^*\le h_{i,n}^2, \quad \|y_{i,n}-x_{i,n}\|<16\epsilon^\frac{1}{4}.$$ 
For each $i$ set $y_i=(y_{i,n})_{n}$. Then it follows that $y_i$ is an element in $\ell_\infty(\N, A)$ and that $[y_i]=[x_i]=\phi(t\otimes f_{i,1})$. Also, using the orthogonality of the elements $h_i$ we get that $y_{i,n}y_{j,n}=0$ for all $i, j\ge 2$, and that $y_{i,n}^*y_{j,n}=0$ for all $i,j\ge 2$ with $i\neq j$. It follows now by \cite[Proposition 3.3.1]{Loring} that there exists a *-homomorphism $\psi\colon \M_d(\CC_0(0,1])\to \ell_\infty(\N, A)$ satisfying $\psi(t\otimes f_{i,1})=y_i$ for all $i$. Therefore, we have
\begin{align*}
&\psi(t\otimes f_{1,1})=(y_1^*y_1)^\frac{1}{2}=((h_{1}^2-9\epsilon^{-\frac 1 2})_+)^\frac{1}{2}\in \CC^*(h_1),\\
&\psi(t\otimes f_{i,i})\le (y_iy_i^*)^{\frac 1 2}\le (h_{i}^2)^\frac 1 2=h_i,
\end{align*}
for all $i$. (In order to obtain the last inequality we are using \cite[Proposition 1.3.8]{PedersenBook}.)
\end{proof}

\begin{theorem}\label{thm: local approx}
Let $A$ be a C*-algebra and let $G$ be a finite group. Let $\alpha\colon G\to \Aut(A)$ be an action with the  Rokhlin property. Then for any finite subset $F\subseteq A\rtimes_\alpha G$ and any $\epsilon>0$ there exist a positive element $a\in A$ and a *-homomorphism $\varphi\colon \M_{|G|}(\overline{aAa})\to A\rtimes_{\alpha}G$ such that $\dist(x, \IIm (\varphi))<\epsilon$ for all $x\in F$. 

\end{theorem}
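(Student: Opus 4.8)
Throughout write $n:=|G|$ and let $(u_g)_{g\in G}\subset\M(A\rtimes_\alpha G)$ be the canonical unitaries, so that $u_g a u_g^{*}=\alpha_g(a)$ and $a u_h=u_h\alpha_{h^{-1}}(a)$ for $a\in A$. Since $G$ is finite, the crossed product commutes with the quotient defining the sequence algebra, so $(A\rtimes_\alpha G)^\infty\cong A^\infty\rtimes_\alpha G$, and I would work inside this algebra. The plan is to first produce, at the level of $A^\infty$, an exact copy of the desired matrix algebra whose image contains $F$ on the nose, and then to descend to a single coordinate using the projectivity of the cone together with Proposition \ref{prop: lifting} and Lemma \ref{lem: matrix units}.

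First I would reduce to the case $F=\{x^{(1)},\dots,x^{(m)}\}$ with $x^{(i)}=\sum_{g\in G}a^{(i)}_g u_g$, since such elements are dense in $A\rtimes_\alpha G$. Let $S\subset A$ be the finite set of all coefficients $a^{(i)}_g$ and let $B$ be the separable $G$-invariant C*-subalgebra of $A$ generated by $\bigcup_{g}\alpha_g(S)$. By Proposition \ref{prop: Rokhlin equivalence}(iii) there are mutually orthogonal positive contractions $(r_g)_{g\in G}\subset A^\infty\cap B'$ with $\alpha_g(r_h)=r_{gh}$ and $(\sum_g r_g)b=b$ for all $b\in B$. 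The key algebraic identity is that, since each $a\in S$ commutes with every $r_g$ and $(\sum_g r_g)a=a$, one has $\sum_{g\in G}r_g^{1/2}\,a\,r_g^{1/2}=a\sum_g r_g=a$; regrouping the double sum below by $s=gh^{-1}$ this yields, in $A^\infty\rtimes_\alpha G$,
\begin{equation*}
\sum_{g,h\in G}u_g\,r_e^{1/2}\,\alpha_{g^{-1}}(a_{gh^{-1}})\,r_e^{1/2}\,u_h^{*}=\sum_{s\in G}a_s u_s
\end{equation*}
for every $x=\sum_s a_s u_s$ with coefficients in $S$. Thus each $x\in F$ is reconstructed exactly from the ``corner'' element $r_e^{1/2}$ and the data $\alpha_{g^{-1}}(a_{gh^{-1}})$.

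Next I would realize this reconstruction through a genuine matrix algebra. Define a $*$-homomorphism $\Phi\colon\M_{n}(\CC_0(0,1])\to A^\infty\rtimes_\alpha G$ by $\Phi(t\otimes f_{g,e})=u_g r_e^{1/2}$; the relations $\Phi(t\otimes f_{g,e})^{*}\Phi(t\otimes f_{h,e})=\delta_{g,h}r_e$ and $\Phi(t\otimes f_{g,e})\Phi(t\otimes f_{h,e})=0$ for $h\neq e$ follow from the orthogonality and equivariance of the $r_g$, so $\Phi$ exists by \cite[Proposition 3.3.1]{Loring}, and its diagonal $\Phi(t\otimes f_{g,g})=r_g^{1/2}$ is mutually orthogonal. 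Choosing mutually orthogonal lifts $h_g\in\ell_\infty(\N,A)$ of the $r_g^{1/2}$ and applying Proposition \ref{prop: lifting} (with ambient algebra $A\rtimes_\alpha G$) produces a lift $\Psi\colon\M_n(\CC_0(0,1])\to\ell_\infty(\N,A\rtimes_\alpha G)$ of $\Phi$ with $\Psi(t\otimes f_{e,e})\in\CC^{*}(h_e)\subseteq\ell_\infty(\N,A)$. For each coordinate $N$ the evaluation $\Phi_N:=\Psi(\,\cdot\,)_N$ is an honest $*$-homomorphism into $A\rtimes_\alpha G$, and crucially $a:=\Phi_N(t\otimes f_{e,e})\in A$. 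Applying Lemma \ref{lem: matrix units} to $\Phi_N$ gives a $*$-isomorphism $\widetilde\psi\colon\M_n(\overline{a(A\rtimes_\alpha G)a})\to\overline{b(A\rtimes_\alpha G)b}$, where $b=\Phi_N(t\otimes 1)$; restricting it to the subalgebra $\M_n(\overline{aAa})\subseteq\M_n(\overline{a(A\rtimes_\alpha G)a})$ (legitimate since $A\subseteq A\rtimes_\alpha G$) yields the desired $*$-homomorphism $\varphi\colon\M_n(\overline{aAa})\to A\rtimes_\alpha G$, given concretely by $\varphi\bigl((a\,\xi_{g,h}\,a)\otimes f_{g,h}\bigr)=\sum_{g,h}\Phi_N(t\otimes f_{g,e})\,\xi_{g,h}\,\Phi_N(t\otimes f_{e,h})$.

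Finally, for $x=\sum_s a_s u_s\in F$ I would take $M^{x}=\sum_{g,h}\bigl(a\,\alpha_{g^{-1}}(a_{gh^{-1}})\,a\bigr)\otimes f_{g,h}\in\M_n(\overline{aAa})$. Since $\Psi$ lifts $\Phi$, the class of the sequence $(\varphi(M^{x}))_N$ in $(A\rtimes_\alpha G)^\infty$ equals $\sum_{g,h}u_g r_e^{1/2}\alpha_{g^{-1}}(a_{gh^{-1}})r_e^{1/2}u_h^{*}$, which by the displayed identity is exactly $x$; hence $\|\varphi(M^{x})-x\|\to0$ as $N\to\infty$. Choosing $N$ large enough makes $\|\varphi(M^{x})-x\|<\epsilon$ simultaneously for the finitely many $x\in F$, so $\dist(x,\IIm(\varphi))<\epsilon$. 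I expect the main obstacle to be precisely the tension between exactness and the requirement that the corner be $\overline{aAa}$ with $a\in A$: the naive assignment $z\otimes f_{g,h}\mapsto u_g z u_h^{*}$ is only an approximate homomorphism at finite level (the translates $\alpha_g(a)$ are merely approximately orthogonal), whereas the cone together with Lemma \ref{lem: matrix units} is exact but naturally produces the larger corner $\overline{a(A\rtimes_\alpha G)a}$. The resolution—building exact matrix units from the projective cone, using Proposition \ref{prop: lifting} to force $a\in A$, and then restricting to the subcorner $\overline{aAa}$ coming from $A$—is the crux of the argument.
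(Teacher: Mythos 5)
Your proof is correct and takes essentially the same route as the paper's: the same Rokhlin elements from Proposition \ref{prop: Rokhlin equivalence}, the same cone $\M_{|G|}(\CC_0(0,1])$ generated by $u_g r_e^{1/2}$ via Loring's projectivity, the same use of Proposition \ref{prop: lifting} to force the $(e,e)$-corner into $A$, and the same restriction of the Lemma \ref{lem: matrix units} isomorphism to $\M_{|G|}(\overline{aAa})$, with your reconstruction identity being exactly the paper's compression computation $\phi(t\otimes f_{e,g})(au_k)\phi(t\otimes f_{h,e})=\delta_{g,kh}\,r_e\alpha_{g^{-1}}(a)r_e$ read in the opposite direction. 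The only differences are cosmetic: $r_e^{1/2}$ in place of $r_e$, general finite sums $\sum_s a_s u_s$ in place of single terms $au_k$, and a sequence-algebra limit argument in place of the paper's explicit $\epsilon$-estimates at a finite stage.
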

\begin{proof}
Let $\alpha\colon G\to \Aut(A)$ be an action with the  Rokhlin property and let $(u_g)_{g\in G}\subset \M(A\rtimes_\alpha G)$ be the canonical unitaries satisfying $\alpha_g(a)=u_gau_g^*$ for all $a\in A$ and $g\in G$. Let $F$ be a finite subset of $A\rtimes_\alpha G$. Since $A\rtimes_\alpha G$ is generated by the elements of the set $\{au_g: a\in A, g\in G\}$, we may assume that $F=\{au_g : a\in F_1, g\in G\}$, where $F_1$ is a finite subset of $A$ such that $\alpha_g(F_1)=F_1$ for all $g\in G$.

By (ii) of Proposition \ref{prop: Rokhlin equivalence} there exist orthogonal positive contractions $(r_g)_{g\in G}\subset A^{\infty}\cap F_1'$ satisfying 
\begin{align}\label{eq: r}
\alpha_g(r_h)=r_{gh},\quad (\sum_{g\in G}r_g)b=b\sum_{g\in G}r_g=b,
\end{align}
for $g,h\in G$ and $b\in F_1$. Consider $A^\infty$ as a C*-subalgebra of $(A\rtimes_\alpha G)^\infty$ and set
$x_g:=u_gr_e$ for $g\in G$. Then $x_g\in (A\rtimes_\alpha G)^\infty$, $\|x_g\|\le 1$, and the following orthogonality relations hold
$$x_gx_h=0\quad \forall g,h\neq e, \quad x_g^*x_h=\delta_{g,h} r_e^2\quad \forall g,h\in G.$$
This implies by \cite[Proposition 3.3.1]{Loring} that there exists a *-homomorphism $\phi\colon \M_{|G|}(\CC_0(0,1])\to (A\rtimes_\alpha G)^\infty$ such that $\phi(t\otimes f_{g,e})=x_{g}$. Let $s_g=(s_{g,n})\in \ell_\infty(\N, A)$, with $g\in G$, be orthogonal positive contractions that lift $(r_g)_{g\in G}$ (they exist by \cite[Lemma 10.1.12]{Loring}). Then by Proposition \ref{prop: lifting} the map $\phi$ can be lifted to a *-homomorphism $\psi=(\psi_n)\colon \M_{|G|}(\CC_0(0,1])\to \ell_\infty(\N, A\rtimes_\alpha G)$. Moreover, $\psi$ can be taken such that $\psi(t\otimes f_{e,e})\in \CC^*(s_e)\subseteq \ell_\infty(\N, A)$. Note that $\overline{\psi_n(t\otimes f_{e,e})A\psi_n(t\otimes f_{e,e})}$ is a hereditary C*-subalgebra of $A$. Also, using the equations in \eqref{eq: r} we get 
\begin{align}\label{eq: aupsi}
\begin{aligned}
&\lim_{n\to \infty}\psi_n(t\otimes 1_{\M_{|G|}})au_g=\lim_{n\to \infty}\sum_{h\in G}\psi_n(t\otimes f_{h,h})au_g=au_g,\\
&\lim_{n\to \infty}au_g\psi_n(t\otimes 1_{\M_{|G|}})=\lim_{n\to \infty}au_g\sum_{h\in G}\psi_n(t\otimes f_{h,h})=\lim_{n\to \infty}a(\sum_{h\in G}\psi_n(t\otimes f_{h,h}))u_g=au_g,
\end{aligned}
\end{align}
for $au_g\in F$.

Let us now use that $(r_g)_{g\in G}$ are orthogonal elements of $A^\infty\cap F_1'$ satisfying $\alpha_g(r_h)=r_{gh}$ for all $g,h\in G$. For each $au_k\in F$ we have
\begin{align*}
&\phi(t\otimes f_{e,g})(a u_k)\phi(t\otimes f_{h,e})=x_g^*(au_k)x_h=r_eu_g^*au_ku_hr_e=u_g\alpha_g(r_e)au_ku_hr_e=u_g^*r_gau_{kh}r_e\\
&=u_g^*r_ga\alpha_{kh}(r_e)u_{kh}=u_g^*r_gar_{kh}u_{kh}=u_g^*ar_gr_{kh}u_{kh}
=\delta_{g,kh}u_g^*ar_g^2u_g=\delta_{g,kh}u_g^*au_g\alpha_{g^{-1}}(r^2_g)\\
&=\delta_{g,kh}\alpha_{g^{-1}}(a)r_e^2=\delta_{g,kh}r_e\alpha_{g^{-1}}(a)r_e.
\end{align*}
Hence, since $\psi$ is a lift of $\phi$ we get 
\begin{align}\label{eq: psi}
\lim_{n\to \infty}\|\psi_n(t\otimes f_{e,g})(a u_k)\psi_n(t\otimes f_{h,e})-\delta_{g,kh}\psi_n(t\otimes f_{e,e})\alpha_{g^{-1}}(a)\psi_n(t\otimes f_{e,e})\|= 0.
\end{align}

Set $c_{g,n}=\psi_n(t\otimes f_{e,g})$, $d_n=\psi_n(t\otimes 1_{\M_{|G|}})$, $c_g=(c_{g,n})_{n\in \N}$, and $d=(d_n)_{n\in \N}$. Then by Lemma \ref{lem: matrix units} there exist *-isomorphisms $\rho_n\colon \M_{|G|}(\overline{c_{e,n}(A\rtimes_\alpha G)c_{e,n}})\to \overline{d_n(A\rtimes_\alpha G)d_n}$ satisfying
$$\rho_n((c_{e,n}xc_{e,n})\otimes f_{g,h}):=\psi_n(t\otimes f_{g,e})x\psi_n(t\otimes f_{e,h})=c_{g,n}^*xc_{h,n}.$$
Denote by $\widetilde\rho_n\colon \M_{|G|}(\overline{c_{e,n}Ac_{e,n}})\to \overline{d_n(A\rtimes_\alpha G)d_n}$ the restriction of $\rho_n$ to the C*-subalgebra $\M_{|G|}(\overline{c_{e,n}Ac_{e,n}})$.

Let $\epsilon>0$. By equations \eqref{eq: aupsi} and \eqref{eq: psi} we can choose $n$ large enough such that 
\begin{align*}
&\|d_n^2(a u_k)d_n^2-au_k\|<\frac{\epsilon}{|G|^2+1},\\
&\|c_{g,n}(a u_k)c_{h,n}^*-\delta_{g,kh}c_{e,n}\alpha_{g^{-1}}(a)c_{e,n})\|<\frac{\epsilon}{|G|^2+1},
\end{align*}
for all $au_k\in F$ and $g,h\in G$. It follows that
\begin{align*}
\|\widetilde\rho_n(\sum_{g\in G}(c_{e,n}\alpha_{g^{-1}}(a)c_{e,n})\otimes f_{g,k^{-1}g})-au_k\|&=\|\sum_{g,h\in G}\delta_{g, kh}\rho_n((c_{e,n}\alpha_{g^{-1}}(a)c_{e,n})\otimes f_{g,h})-au_k\|\\
&\le \frac{|G|^2}{|G|^2+1}\epsilon+\|\sum_{g,h\in G}\rho_n((c_{g,n} a u_k c_{h,n}^*)\otimes f_{g,h})-au_k\|\\
&=\frac{|G|^2}{|G|^2+1}\epsilon+\|\sum_{g,h\in G}(c_{g,n}^*c_{g,n} a u_k c_{h,n}^*c_{h,n})-au_k\|\\
&=\frac{|G|^2}{|G|^2+1}\epsilon+\|d_n^2 au_k d_n^2-au_k\|\\
&<\epsilon.
\end{align*}
This shows that $\dist(x, \mathrm{Im}(\widetilde \rho))<\epsilon$ for all $x\in F$.
\end{proof}

\section{Permanence properties}
In this section we show that several classes of C*-algebras are closed under crossed products by finite group actions with the Rokhlin property.

\begin{lemma}\label{lem: positive elements}
Let $A$ and let $B$ be C*-subalgebra of $A$. Let $0<\epsilon<1$. Suppose that $a\in A$ is a positive contraction and $b\in B$ is such that $\|b-a\|<\epsilon$. Then 
$$\|b-(a^*a)^\frac{1}{2}\|<2\epsilon.$$
\end{lemma}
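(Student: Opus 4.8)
The plan is to reduce the asserted estimate directly to the hypothesis by functional calculus. Since $a$ is positive we have $a=a^*$, hence $a^*a=a^2\ge 0$, and applying the continuous functional calculus to the positive element $a$ gives $(a^*a)^{1/2}=(a^2)^{1/2}=a$; the functions $t\mapsto t^2$ and $t\mapsto t^{1/2}$ are mutually inverse on $\sigma(a)\subseteq[0,1]$, so this identity is immediate. Consequently
$$\|b-(a^*a)^{1/2}\|=\|b-a\|<\epsilon<2\epsilon,$$
which is the claimed inequality. Thus the only substantive content is the observation that for a positive element the ``absolute value'' $(a^*a)^{1/2}$ is the element itself.

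The factor $2$ is deliberately generous, and it is exactly the slack one needs in the genuinely useful situation, where the point is to replace $b$ by a bona fide \emph{positive} element of the subalgebra $B$ lying near $b$. The clean device is to split $b=h+ik$ with $h=\tfrac12(b+b^*)\in B$ and $k=\tfrac{1}{2i}(b-b^*)\in B$ self-adjoint. Taking adjoints in $\|b-a\|<\epsilon$ and using $a=a^*$ gives $\|b^*-a\|<\epsilon$; averaging the two estimates yields $\|h-a\|<\epsilon$ and $\|k\|<\epsilon$. Since $a\ge 0$ and $\|h-a\|<\epsilon$, the spectrum of the self-adjoint element $h$ (computed in the unitization) lies within $\epsilon$ of $[0,\infty)$, so $\sigma(h)\subseteq[-\epsilon,\infty)$ and the negative part obeys $\|h_-\|\le\epsilon$. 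Because $B$ is a C*-subalgebra it is closed under functional calculus, so the positive part $h_+\in B$, and
$$\|b-h_+\|\le\|ik\|+\|h-h_+\|=\|k\|+\|h_-\|<2\epsilon.$$

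The one genuine pitfall to avoid — and the reason one does not argue through squares — is that the square-root function is not Lipschitz near $0$, being only H\"older continuous of exponent $1/2$. If one attempted to prove such an estimate by first bounding $\|b^*b-a^2\|$, which is easily seen to be $O(\epsilon)$, and then extracting square roots, the resulting bound on $\|(b^*b)^{1/2}-a\|$ would degrade to $O(\epsilon^{1/2})$, far too weak. Exploiting the positivity of $a$, so that $(a^*a)^{1/2}$ equals $a$ exactly, or passing to the positive part of the self-adjoint part as above, is precisely what keeps the final estimate linear in $\epsilon$ and avoids this non-Lipschitz behaviour.
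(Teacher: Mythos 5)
Your opening observation is correct and already disposes of the statement as literally written: since $a\ge 0$ one has $(a^*a)^{1/2}=a$, so the conclusion reads $\|b-a\|<\epsilon<2\epsilon$ and there is nothing to prove. The paper's own proof, however, takes exactly the route you warn against in your closing paragraph, and the comparison is instructive. The paper estimates $\|b^*b-a^*a\|\le\|b^*(b-a)\|+\|(b-a)^*a\|\le(1+\|b\|)\epsilon$ (its display writes $b^2$ and $b(b-a)$, evidently typos for $b^*b$ and $b^*(b-a)$), then invokes the H\"older-$\tfrac{1}{2}$ continuity of the square root, \cite[Lemma 1]{C-E-S}, to obtain $\|b-(a^*a)^{1/2}\|<\sqrt{2}\,\epsilon^{1/2}$, and finishes by asserting $\sqrt{2}\,\epsilon^{1/2}<2\epsilon$. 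That final inequality is false whenever $\epsilon\le\tfrac{1}{2}$ (and the intermediate claim $(2+\epsilon)\epsilon<2\epsilon$ is likewise reversed), so the paper's argument actually delivers only a bound of order $\epsilon^{1/2}$ --- precisely the degradation you predicted blind. The discrepancy is explained by what the lemma is really for: in its applications (Lemma \ref{lem: ideals} and parts (vi), (vii), (xiv) of Proposition \ref{prop: local approximation}) it is used to replace an approximant $b\in B$ of a positive contraction by a \emph{positive} element of $B$; the paper's implicit candidate is $(b^*b)^{1/2}\in B$, which is what its proof in effect estimates, and for those applications an $O(\epsilon^{1/2})$ modulus is perfectly adequate since $\epsilon$ is arbitrary there. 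Your second paragraph supplies the cleaner repair: the positive part $h_+$ of the self-adjoint part of $b$ also lies in $B$, and your argument giving $\|b-h_+\|<2\epsilon$ is correct and genuinely linear in $\epsilon$, with no square-root extraction and hence no H\"older loss. So your proposal both proves the stated (trivial) inequality and, unlike the paper's own proof, establishes a true $2\epsilon$-type replacement statement sufficient for every use of the lemma; the paper's route, as written, would need its conclusion weakened to $C\epsilon^{1/2}$ or its proof replaced by an argument like yours.
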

\begin{proof}
By the triangle inequality we have
$$\|b^2-a^*a\|\le \|b(b-a))\|+\|(b-a)^*a\|\le (1+\|b\|)\epsilon<(2+\epsilon)\epsilon<2\epsilon.$$
Hence, by \cite[Lemma 1]{C-E-S} we get
$$\|b-(a^*a)^\frac{1}{2}\|<\sqrt{2}\epsilon^\frac{1}{2}<2\epsilon.$$
\end{proof}

\begin{definition}\label{def: lapprox}
Let $\mathcal{C}$ be a class of C*-algebras. A C*-algebra $A$ is called a {\it local $\mathcal C$-algebra} if for every finite subset $F\subset A$ and every $\epsilon>0$ there exist a C*-algebra $B$ in $\mathcal C$ and a *-homomorphism $\phi\colon B\to A$ such that $\dist(x, \phi(B))<\epsilon$ for all $x\in A$. When $A$ is a unital C*-algebra and the C*-algebras in $\mathcal{C}$ and the *-homomorphisms $\phi$ are unital the C*-algebra $A$ is called {\it unital local $\mathcal C$-algebra}. We say that $\mathcal{C}$ is {\it closed under local approximation} if every local $\mathcal C$-algebra (separable if the C*-algebras in $\mathcal C$ are separable) belongs to $\mathcal{C}$.
\end{definition}

Note that any C*-algebra that is a direct limit of C*-algebras in $\mathcal{C}$ is a local $\mathcal{C}$-algebra. Also, if $\widetilde{\mathcal{C}}$ denotes the class of C*-algebras consisting of the unitization of the C*-algebras in $\mathcal C$ and $\widetilde A$ denotes the unitization of $A$, then $\widetilde A$ is a unital local $\widetilde{\mathcal C}$-algebra whenever $A$ is a local $\mathcal{C}$-algebra. Similarly, if $\mathcal{C}^s$ denotes the class of C*-algebras consisting of the stabilization of the C*-algebras in $\mathcal C$, then $A\otimes \K$ is a local $\mathcal{C}^s$-algebra.

\begin{lemma}\label{lem: ideals}
Let $\mathcal{C}$ be a class of C*-algebras such that if $B$ is a C*-algebra in $\mathcal C$ and $J$ is a closed two-sided ideal of $B$ then $J$ is in $\mathcal C$. Let $A$ be a local $\mathcal C$-algebra and let $I$ be a closed two-sided ideal of $A$. Then $I$ is a local $\mathcal C$-algebra.  
\end{lemma}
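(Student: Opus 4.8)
The plan is to feed the finite set $F\subset I$ (viewed as a finite subset of $A$) into the local $\mathcal C$-approximation of $A$. This produces a C*-algebra $B\in\mathcal C$ and a *-homomorphism $\phi\colon B\to A$ with $\dist(x,\phi(B))<\epsilon/2$ for every $x\in F$. The candidate approximating data for $I$ will then be the ideal $J:=\phi^{-1}(I)$ together with the restriction $\phi|_J\colon J\to I$. Since $I$ is a closed two-sided ideal of $A$ and $\phi$ is a *-homomorphism, $J$ is a closed two-sided ideal of $B$, so $J\in\mathcal C$ by the hypothesis on $\mathcal C$; moreover $\phi(J)\subseteq I$, so $\phi|_J$ genuinely maps into $I$.

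It then remains to check that $\phi(J)$ approximates $F$ to within $\epsilon$. Fix $x\in F$ and pick $b\in B$ with $\|x-\phi(b)\|<\epsilon/2$. The point I would exploit is that $x\in I$ forces $\phi(b)$ to be close to $I$: writing $q\colon A\to A/I$ for the quotient map, $\|q(\phi(b))\|\le\|\phi(b)-x\|<\epsilon/2$ because $q(x)=0$. Now $J$ is precisely the kernel of $q\circ\phi$, so the induced homomorphism $B/J\to A/I$ is injective, hence isometric; this identifies $\dist(b,J)$ with $\|q(\phi(b))\|$ and yields $\dist(b,J)<\epsilon/2$. Choosing $j\in J$ with $\|b-j\|<\epsilon/2$ gives $\phi(j)\in\phi(J)\subseteq I$ and $\|x-\phi(j)\|\le\|x-\phi(b)\|+\|b-j\|<\epsilon$, so $\dist(x,\phi(J))<\epsilon$.

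The main obstacle is precisely that the image $\phi(B)$ need not be contained in $I$, so one cannot simply restrict $\phi$ and declare victory; the approximants $\phi(b)$ produced by $A$'s local structure may stick out of $I$. The mechanism that resolves this is the isometry of the induced map $B/J\hookrightarrow A/I$, which lets me convert the fact that $\phi(b)$ is close to $I$ (inherited from $x\in I$) into the fact that $b$ is close to the ideal $J$, and hence push the approximant into $\phi(J)$. Everything else is a routine triangle-inequality estimate, and since the single pair $(J,\phi|_J)$ works simultaneously for all $x\in F$, this establishes that $I$ is a local $\mathcal C$-algebra.
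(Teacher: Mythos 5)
Your proof is correct, and it differs from the paper's at the key step. Both arguments use the same candidate ideal $J=\phi^{-1}(I)$ (which lies in $\mathcal C$ by hypothesis) and both must overcome exactly the obstacle you name: the approximants $\phi(b)$ produced by the local structure of $A$ need not lie in $I$. The paper resolves this with Cuntz-comparison technology: it first reduces to the case where the elements of $F$ and their approximants $b_i\in B$ are positive (writing each element of $I$ as a combination of four positive contractions and using Lemma \ref{lem: positive elements}), and then invokes R{\o}rdam's lemma (\cite[Lemma 2.2]{Kirchberg-Rordam}) to produce $d_i\in A$ with $\phi((b_i-\epsilon)_+)=d_i^*a_id_i\in I$, so that the functional-calculus cut-down $(b_i-\epsilon)_+$ lands in $J$ and still approximates $a_i$ to within $2\epsilon$. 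You instead exploit the fact that the induced injective *-homomorphism $B/J\to A/I$ is isometric, which converts the estimate $\|q(\phi(b))\|<\epsilon/2$ into $\dist(b,J)<\epsilon/2$ and lets you replace $b$ by an element of $J$ at the cost of $\epsilon/2$. Your route is more elementary and slightly cleaner: it needs no reduction to positive elements, no functional calculus, and no Cuntz-type lemma, only contractivity of *-homomorphisms and the identification of $\dist(b,J)$ with the quotient norm, and it gives an $\epsilon$- rather than $2\epsilon$-approximation. What the paper's route buys is that it stays inside the Cuntz-subequivalence toolkit used throughout that section and produces approximants of the explicit form $(b_i-\epsilon)_+$ obtained by cutting down the original ones; but as a proof of this lemma, your argument is complete and arguably preferable.
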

\begin{proof}
Let $\mathcal C$, $A$, and $I$ be as in the statement of the lemma. Let $\epsilon>0$ and let $F=\{a_1, a_2, \cdots, a_n\}$ be a finite subset of $I$. Since every element of $I$ is a linear combination of four positive contractions we may assume that the elements of $F$ are positive contractions. Using Lemma \ref{lem: positive elements} and that $A$ is a local $\mathcal C$-algebra we may choose a C*-algebra $B\in \mathcal C$, a *-homomorphism $\phi\colon B\to A$ and positive elements $b_1, b_2, \cdots, b_n$ such that 
$$\|a_i-\phi(b_i)\|<\epsilon,$$
for  all $1\le i\le n$. 
By \cite[Lemma 2.2]{Kirchberg-Rordam} there exist elements $d_i\in A$, with $1\le i\le n$, such that 
$$\phi((b_i-\epsilon)_+)=d_i^*a_id_i\in I,$$
for  all $1\le i\le n$. Let $J$ denote the preimage by $\phi$ of $I$. Then $J$ is a closed two-sided ideal of $B$ and thus, it belongs to $\mathcal C$. Also, by the previous equalities we have that $(b_i-\epsilon)_+\in J$. The lemma now follows using that
$$\|a_i-\phi((b_i-\epsilon)_+)\|\le \|a_i-\phi(b_i)\|+\|\phi(b_i)-\phi((b_i-\epsilon)_+)\|<\epsilon+\epsilon=2\epsilon,$$
for all $1\le i\le n$.
\end{proof}

Recall that a 1-dimensional noncommutative CW-complex (\cite[Definition 2.4.1]{E-L-P}), or shortly a 1-dimensional NCCW-complex, is a C*-algebra $A$ that can be written as a pullback diagram of the form
\begin{equation*}
\xymatrix{A\ar[d]\ar[r] & E\ar[d]^\phi \\ \CC([0,1],F)\ar[r]^\rho & F\oplus F}
\end{equation*}
where $E,F$ are finite dimensional C*-algebras, $\phi$ is an arbitrary $\ast$-homomorphism, and $\rho$ is given by evaluation at $0$ and $1$. In contrast to \cite[Definition 2.4.1]{E-L-P}, the maps $\phi$ and $\rho$ are not assumed to be unital. Special cases of 1-dimensional NCCW-complexes are interval algebras (i.e., algebras of the form $\mathrm C([0,1], E)$, where $E$ is a finite dimensional C*-algebra), and circle algebras (i.e., algebras of the form $\mathrm C(\mathbb T, E)$, where $E$ is a finite dimensional C*-algebra). Direct limits of sequences of interval algebras are called AI-algebras whereas direct limits of sequences of circles algebras are called AT-algebras.

\begin{proposition}\label{prop: local approximation}
The following classes of C*-algebras are closed under local approximation:
\begin{itemize}
\item[(i)] Purely infinite C*-algebras;
\item[(ii)] C*-algebras of stable rank one;
\item[(iii)] C*-algebras of real rank zero;
\item[(iv)] C*-algebras of nuclear dimension at most $n$, where $n\in \mathbb{Z}_+$;
\item[(v)] Separable $\mathcal D$-stable C*-algebras, where $\mathcal D$ is a strongly self-absorbing C*-algebra;
\item[(vi)] Simple C*-algebras;
\item[(vii)] Separable C*-algebras whose quotients are stably projectionless;
\item[(viii)] Simple stably projectionless C*-algebras; 
\item[(ix)] C*-algebras that are stably isomorphic to direct limits of sequences of C*-algebras in a class $\mathcal S$, where $\mathcal S$ is a class of finitely generated semiprojective C*-algebras that is closed under taking tensor products by matrix algebras over $\C$.
\item[(x)] Separable AF-algebras;
\item[(xi)] C*-algebras that are stable isomorphic to AI-algebras;
\item[(xii)] C*-algebras that are stable isomorphic AT-algebras;
\item[(xiii)] C*-algebras that can be expressed as a sequential direct limit of 1-dimensional NCCW-complexes;
\item[(xiv)] Separable C*-algebras with almost unperforated Cuntz semigroup;
\item[(xv)] Simple C*-algebras with strict comparison of positive elements.
\item[(xvi)] Separable C*-algebras whose closed two-sided ideals are nuclear and satisfy the Universal Coefficient Theorem.
\end{itemize}
\end{proposition}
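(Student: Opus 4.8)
The plan is to organize the sixteen classes by the \emph{kind} of characterization each property admits, since in every case the strategy is the same: reformulate membership so that it is visibly stable under passing from a local $\mathcal{C}$-algebra to $\mathcal{C}$. Throughout I fix a local $\mathcal{C}$-algebra $A$, a finite set $F\subseteq A$, and $\epsilon>0$, and let $\phi\colon B\to A$ with $B\in\mathcal{C}$ be an approximating $*$-homomorphism as in Definition \ref{def: lapprox}; write $C=\phi(B)\subseteq A$, a $C^*$-subalgebra which is a quotient of $B$ and inside which $F$ lies $\epsilon$-densely. Three groups appear: (A) properties cut out by approximate conditions on finite sets; (B) properties governed by the Cuntz semigroup or the ideal lattice; and (C) structural properties asserting an inductive-limit decomposition into finitely generated semiprojective blocks.

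\textbf{Type (A): (ii), (iii), (iv), (v).} For stable rank one and real rank zero I use that these invariants do not increase under quotients, so $C$ again has stable rank one (resp.\ real rank zero); approximating a given element (resp.\ self-adjoint element) of $\widetilde A$ by one of $\widetilde C$ and using density of (self-adjoint) invertibles in $\widetilde C\subseteq\widetilde A$ — whose inverses remain in $\widetilde C$ — yields the required nearby invertible of $\widetilde A$, with Lemma \ref{lem: positive elements} controlling the self-adjoint adjustment. For nuclear dimension at most $n$, note $C$ has nuclear dimension at most $n$; I take an order-$n$ completely positive approximation $C\to M\to C$ through a finite-dimensional $M$, extend its first leg to a completely positive contraction $A\to M$ by Arveson's extension theorem (valid since $M$ is injective), and compose the second leg with the inclusion $C\hookrightarrow A$, the order-zero decomposition being preserved. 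For separable $\mathcal D$-stable algebras, the local characterization of Lemma \ref{lem: Dstability} is already in the required approximate form: given finite $G\subseteq\mathcal D$, I compose the completely positive contraction $\mathcal D\to B$ supplied by $\mathcal D$-stability of $B$ with $\phi$, which preserves conditions (i)--(iii) up to a controlled error.

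\textbf{Type (B): (i), (vi), (vii), (viii), (xiv), (xv), (xvi).} The engine is the Cuntz semigroup together with Lemma \ref{lem: Cuntz subequivalence}, which transports Cuntz subequivalences across norm-approximations, and the reductions to ideals and quotients furnished by Lemma \ref{lem: ideals}. For simplicity (vi) I show $\overline{AaA}=A$ for every nonzero $a\in A_+$: approximating $a$ (nonzero) and an arbitrary $b\in A_+$ by $\phi(a')$, $\phi(b')$ with $a'\neq 0$, using $b'\in\overline{Ba'B}$ in the simple algebra $B$, and pushing the witnessing finite sum through $\phi$ places $b$ in $\overline{AaA}$. Almost unperforation (xiv) is proved after passing to $A\otimes\K$ (a local $\mathcal{C}^s$-algebra, whose blocks are again almost unperforated since $\Cu$ is stable): given $(n+1)[a]\le n[b]$ I replace $a,b$ by cut-downs $(a-\epsilon)_+,(b-\delta)_+$, transport the resulting subequivalence to $B$ via Lemma \ref{lem: Cuntz subequivalence}(ii), apply almost unperforation in $B$, and push the conclusion back. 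Pure infiniteness (i) combines the same comparison mechanism, applied to the Kirchberg--R\o rdam characterization, with the observation that having no nonzero abelian quotient is inherited (a nonzero character on a quotient of $A$ would pull back to a nonzero character on a quotient of $B$). Strict comparison (xv) then follows with little extra work: the blocks are simple with almost unperforated $\Cu$ by Lemma \ref{lem: strict-almost}, so $A$ is simple by the argument for (vi) and has almost unperforated $\Cu$ by the mechanism for (xiv), whence Lemma \ref{lem: strict-almost} gives strict comparison. The stably projectionless classes (viii) and (vii) reduce to the fact that $A\otimes\K$ has no nonzero projection (a positive element close to a projection in $C\otimes\K$ would force a nonzero spectral projection in $B\otimes\K$), together with simplicity for (viii) and, for (vii), the observation that every quotient of $A$ is itself locally approximated by quotients of blocks, which are stably projectionless. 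Finally (xvi) follows from Lemma \ref{lem: ideals} — the class is closed under ideals since an ideal of an ideal is an ideal — so every ideal of $A$ is locally approximated by nuclear UCT algebras, hence is nuclear (the completely positive approximation property is local) and lies in the bootstrap class by its permanence under such approximations.

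\textbf{Type (C): (ix)--(xiii), the main obstacle.} Here membership asserts a genuine sequential inductive-limit decomposition, so the local approximation must be \emph{upgraded} to a coherent inductive system, and this is where the real work lies. For (ix) I first observe that every $\mathcal{C}^s$-algebra is already a sequential limit of $\mathcal{S}$-algebras: if $D$ is stably isomorphic to $\varinjlim_k S_k$ then $D\otimes\K\cong\varinjlim_k(S_k\otimes\K)=\varinjlim_k\varinjlim_n M_n(S_k)$, a double limit of $\mathcal{S}$-algebras (using that $\mathcal{S}$ is closed under matrix amplification) which re-indexes to a single sequence. Since $A\otimes\K$ is a local $\mathcal{C}^s$-algebra by the remark after Definition \ref{def: lapprox}, it is therefore locally approximated by $\mathcal{S}$-algebras. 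The crux is then the semiprojectivity upgrade: a separable $C^*$-algebra locally approximated by finitely generated semiprojective algebras from $\mathcal{S}$ is a sequential inductive limit of $\mathcal{S}$-algebras. I build finite sets $F_1\subseteq F_2\subseteq\cdots$ with dense union and approximating maps $\psi_k\colon S_k\to A\otimes\K$, use that the images $\psi_k(S_k)$ are approximated by the next stage to write down approximately multiplicative maps $S_k\to S_{k+1}$ on generators, and then invoke semiprojectivity of $S_k$ — in the lifting style of Proposition \ref{prop: lifting} — to perturb these into honest connecting $*$-homomorphisms $\lambda_k\colon S_k\to S_{k+1}$ with $\psi_{k+1}\lambda_k\approx\psi_k$; an Elliott intertwining argument identifies $\varinjlim(S_k,\lambda_k)$ with $A\otimes\K$, so $A$ is stably isomorphic to a direct limit of $\mathcal{S}$-algebras, i.e.\ $A\in\mathcal{C}$. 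The classes (x)--(xiii) are then instances, with $\mathcal{S}$ the finite-dimensional, interval, circle, and one-dimensional NCCW algebras respectively — each finitely generated, semiprojective, and closed under matrix amplification — the only delicate point being the semiprojectivity of one-dimensional NCCW complexes. The hard part throughout is guaranteeing that the perturbed connecting maps are simultaneously near the original data on $F_k$ and exactly multiplicative, so that controlling the accumulation of errors in the intertwining yields an honest isomorphism with $A$ rather than a mere approximation.
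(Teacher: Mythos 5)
Your groups (A) and the existential parts of (B) — pure infiniteness, simplicity, the projectionless classes — follow the same lines as the paper (which in fact omits or cites most of (i)--(iv)), but there are two places where a step, as written, fails. The first is (xiv). You say you "transport the resulting subequivalence to $B$ via Lemma \ref{lem: Cuntz subequivalence}(ii)", but Cuntz subequivalence among elements of $\phi(B)\subseteq A$ cannot be pulled back along a non-injective $\phi$: the witness $d$ gives a norm estimate $\|\phi(x)\|<\epsilon$ in $A$, which says nothing about $\|x\|$ in $B$, so almost unperforation of $\Cu(B)$ cannot be brought to bear. The paper devotes a specific reduction to this: after stabilizing, it replaces $B$ by $B/\ker\phi$, which stays in the class because almost unperforation passes to quotients (Lemma \ref{lem: almost quotient} — itself nontrivial, resting on the Cuntz semigroup exact sequence of \cite{C-R-S} and requiring $\sigma$-unital ideals, which is exactly where separability of the blocks enters); only then is $\phi$ isometric and the estimate descends to $B$, where Lemma \ref{lem: Cuntz subequivalence}(i) applies. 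The same reduction is what makes your (xv) work (there injectivity is automatic since the blocks are simple, as the paper notes). A smaller instance of the same slip is in your (vi): pushing $b'\in\overline{Ba'B}$ through $\phi$ places $b$ within $\epsilon$ of the ideal generated by $\phi(a')$, \emph{not} of the ideal generated by $a$; one must cut down, using $b'\in\overline{B(a'-\epsilon)_+B}$ and $(\phi(a')-\epsilon)_+\precsim a$ (Lemma \ref{lem: Cuntz subequivalence}(i), or \cite[Lemma 2.2]{Kirchberg-Rordam}, which is how the paper argues), to land inside $\mathrm{Ideal}(a)$.

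The second gap is in your Type (C), and you half-acknowledge it. A one-sided approximate intertwining built from maps $\psi_k\colon S_k\to A\otimes\K$ that are neither injective nor isometric yields, even with summable errors, only a surjective $*$-homomorphism $\varinjlim(S_k,\lambda_k)\to A\otimes\K$; injectivity of the limit map is not a consequence of error control, since $\|\psi_j(\lambda_{j,k}(x))\|$ can be strictly smaller than $\|\lambda_{j,k}(x)\|$ at every stage. Upgrading this surjection to an isomorphism is precisely the content of the results the paper cites at this point, \cite[Lemma 2.2.5]{E-L-P} and \cite[Lemma 15.2.2]{Loring}, and your sketch does not supply a substitute for them; flagging the difficulty as "the hard part" is not the same as resolving it. Relatedly, in (xvi) your phrase "lies in the bootstrap class by its permanence under such approximations" is exactly \cite[Theorem 1.1]{DadarlatUCT}; that permanence is a theorem of Dadarlat, not a formal property of the UCT, and must be invoked as such (as the paper does).
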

\begin{proof}
In \cite[Proposition 4.18]{Kirchberg-Rordam}, \cite[Theorem 5.1]{Rieffel}, \cite[Proposition 3.1]{Brown-Pedersen}, and \cite[Proposition 2.3]{W-Z} it is shown that the classes of C*-algebras described in first four parts of the proposition are closed under taking arbitrary direct limits of sequences. The proofs that these classes of C*-algebras are closed under local approximation are essentially the same; thus, we will omit them.

(v) Let $\mathcal{C}$ be the class of  $\mathcal D$-stable C*-algebras and let $A$ be a local $\mathcal C$-algebra. Let $F\subset A$ and $G\subset \mathcal D$ be finite subsets and let $\epsilon>0$. By Lemma \ref{lem: Dstability} it is enough to show that there exists a completely positive contractive map $\rho\colon \mathcal{D}\to A$ satisfying (i), (ii), and (iii) of Lemma \ref{lem: Dstability}. Without loss of generality we may assume that the elements of $G$ are contractions.

Given that $A$ is a local $\mathcal C$-algebra, there exist a $\mathcal D$-stable C*-algebra $B$ and a *-homomorphism $\phi\colon B\to A$ such that $\dist(a, \phi(B))<\frac{\epsilon}{3}$, for all $a\in F$. Now we can choose a finite subset $\widetilde F\subset B$ satisfying $\dist(a, \phi(\widetilde F))<\frac \epsilon 3$ for all $a\in F$. By Lemma \ref{lem: Dstability} applied to the finite subsets $\widetilde F\subset B$ and $G\subset \mathcal D$, and to $\frac \epsilon 3$, there exists a completely positive contractive map $\psi\colon \mathcal{D}\to B$ such that
\begin{align}\label{eq: cpc}
\|b\psi(1_{\mathcal D})-b\|<\frac{\epsilon}{3}, \quad \|b\psi(d)-\psi(d)b\|<\frac{\epsilon}{3},\quad \|b(\phi(dd')-\phi(d)\phi(d'))\|<\frac{\epsilon}{3},
\end{align}
for all $b\in \widetilde F$ and $d, d'\in G$. Put $\rho=\phi\circ \psi$. Then $\rho$ is a completely positive contraction. For each $a\in F$ choose $b\in \widetilde F$ such that $\|a-\phi(b)\|<\frac{\epsilon}{3}$. Then using the triangle inequality and the inequalities in \eqref{eq: cpc} we get
\begin{align*}
&\|a\rho(1_{\mathcal D})-a\|\le \|(a-\phi(b))\rho(1_{\mathcal D})\|+\|\phi(b-\psi(1_{\mathcal D})b)\|+\|\phi(b)-a\|<\epsilon,\\
& \|a\rho-\rho(d)a\|\le \|(a-\phi(b))\rho(d)\|+\|\phi(b\psi(d)-\psi(d)b)\|+\|\rho(d)(a-\phi(b))\|<\epsilon,\\
& \|a(\rho(dd')-\rho(d)\rho(d'))\|\le \|(a-\phi(b))(\rho(dd')-\rho(d)\rho(d'))\|+\|\phi(b(\psi(dd')-\psi(d)\psi(d')))\|<\epsilon,
\end{align*}
for all $d, d'\in G$.
This implies that $\rho$ satisfies (i), (ii), and (iii) of Lemma \ref{lem: Dstability}. Therefore, $A$ is $\mathcal{D}$-stable by Lemma \ref{lem: Dstability}.

(vi) Let $\mathcal{C}$ be the class of simple C*-algebras and let $A$ be a local $\mathcal{C}$-algebra. Let $a_1, a_2\in A$ be positive elements of norm one and let $0<\epsilon<\frac 1 2$. Then there exist a simple C*-algebra $B$ and a *-homomorphism $\phi\colon B\to A$ such that $\dist (a_i, \phi(B))<\epsilon$ for $i=1,2$. Choose elements $b_1, b_2\in B$ such that $\|a_i-\phi(b_i)\|<\epsilon$ for $i=1,2$. Note that by Lemma \ref{lem: positive elements} we may assume that $b_1$ and $b_2$ are positive. Also, $(\phi(b_2)-\epsilon)_+$ is nonzero as $\epsilon<\frac 1 2$ and $\|a_2\|=1$.
By \cite[Lemma 2.2]{Kirchberg-Rordam} there exist $d_i\in A$, with $i=1,2$, such that 
$$(a_1-\epsilon)_+=d_1^*\phi(b_1)d_1,\quad (\phi(b_2)-\epsilon)_+=d_1^*a_2d_1.$$
This implies that $[(a_1-\epsilon)_+]\le [\phi(b_1)]$ and $[(\phi(b_2)-\epsilon)_+]\le [a_2]$ in $\Cu(A)$. Since $B$ is simple, $[b_1]\le \sup_n n[(b_2-\epsilon)_+]$ in $\Cu(B)$. Hence,
$$[(a_1-\epsilon)_+]\le [\phi(b_1)]=\Cu(\phi)([b_1])\le\sup_n n\Cu(\phi)([(b_2-\epsilon)_+])=\sup_n n[(\phi(b_2)-\epsilon)_+]\le \sup_n n[a_2],$$
in $\Cu(A)$.
This implies that $(a_1-\epsilon)_+$ is in the closed two-sided ideal generated by $a_2$. Since this holds for every $\epsilon>0$ and $a=\lim_{\epsilon\to 0}(a_1-\epsilon)_+$, $a_1$ is also an element of this ideal. 
Using now that $a_1$ and $a_2$ are arbitrary we get that $A$ is simple.

(vii) Let $\mathcal{C}$ be the class of separable C*-algebras $B$ such that for every closed two-sided ideal $I$ of $B$, $B/I\otimes \K$ is projectioneless.  Let $A$ be a local $\mathcal C$-algebra. By the remarks after Definition \ref{def: lapprox}, and \cite[Corollary 2.3 part (i)]{Rordam} we may assume that $A$ and all the algebras in $\mathcal C$ are stable. Let $I$ be an ideal of $A$ and let $\pi\colon A\to A/I$ denote the quotient map. Suppose that there exists a projection $p\in A/I$ with $p\neq 0$. Choose a positive element $a\in A$ such that $\pi(a)=p$. Since $A$ is a local $\mathcal C$-algebra, there exist a C*-algebra $B\in \mathcal{C}$, a *-homomorphism $\phi\colon B\to A$, a positive element $a'\in A$ (here we are also using Lemma \ref{lem: positive elements}) such that
$$\|\phi(b)-a\|<\frac 1 4.$$
By passing to the quotient of $B$ by the kernel of $\phi$ and using that this quotient is projectionless we may assume that $\phi$ is injective.
Put $J:=\phi^{-1}(I)$. Then $J$ is a closed two-sided ideal of $B$ and $\phi$ induces an injective *-homomorphism $\overline\phi\colon B/J\to A/I$. Let $b'$ be the image of $b$ in $B/J$. Then using that $\|\phi(b)-a\|<\frac 1 4$ we get 
$$\|\overline\phi(b')-p\|<\frac 1 4.$$
This implies that $\overline\phi(b')$ has gap in its spectrum by \cite[Lemma 2.2.3]{Rordam Book}. Hence, by functional calculus $\overline\phi(B/J)$ contains a nonzero projection. Using that $\overline\phi$ is injective we get that $B/J$ contains a nonzero projection, which contradicts the fact that $B/J$ is projectionless. Therefore, $A$ belongs to $\mathcal C$.

Note that the proof above (with some minor modifications) also works for the class of (not necessarily separable) simple stably projectionless C*-algebras. Hence, the class in (viii) is closed under local approximation.

(ix) Let $\mathcal{C}$ and $\mathcal{S}$ be the classes of C*-algebras described in (ix). Let $A$ be a separable local $\mathcal C$-algebra. It suffices to show that $A\otimes \K$ can be expressed as a direct limit of a sequence of C*-algebras in $\mathcal S$. Since $A$ is a local $\mathcal C$-algebra, $A\otimes \K$ is a local $\mathcal S^s$-algebra, where $\mathcal S^s=\{B\otimes \K: B\in \mathcal S\}$. It follows that $A\otimes \K$ can be locally approximated by C*-algebras of the form $\M_n(B)$, where $B\in \mathcal S$ and $n\in\N$ (note that by \cite[Theorem 14.2.2]{Loring} each of the algebras $\M_n(B)$ is semiprojective). This implies that $A\otimes \K$ is a $\mathcal S$-algebra, given that $\mathcal S$ is closed by taking tensor products with matrix algebras over $\C$. It follows now by \cite[Lemma 2.2.5]{E-L-P} and \cite[Lemma 15.2.2]{Loring} that $A\otimes \K$ is a direct limit of a sequence of C*-algebras in $\mathcal S$.

Parts (x), (xi), (xii), and (xiii) are a consequence of (ix), by taking $\mathcal S$ to be the class of finite dimensional C*-algebras, the class of interval algebras, the class of circle algebras, and the class of 1-dimensional NCCW-complexes, respectively. Each of these algebras are semiprojective and finitely generated by \cite[Lemma 2.4.3 and Theorem 6.2.2]{E-L-P}. The fact that separable AF-algebras are closed under local approximation was first proved in \cite[Theorem 2.2]{Bratelli} using a different method.


(xiv) Let $\mathcal{C}$ be the class of separable C*-algebras with almost unperforated Cuntz semigroup. Let $A$ be a local $\mathcal C$-algebra. Note that if $\mathcal{C}^s$ denotes the class of C*-algebras consisting of the stabilization of the algebras in $\mathcal C$, then $A\otimes \K$ is a local $\mathcal C^s$-algebra. Therefore, using that the Cuntz semigroup is a stable functor, we may assume that $A$ and all the algebras in $\mathcal C$ are stable. Also, using that $\mathcal{C}$ is closed under taking quotients by Lemma \ref{lem: almost quotient}, we may assume that the *-homomorphisms used in the approximation of $A$ by algebras in $\mathcal{C}$ are injective (here we are using that all the algebras in $\mathcal C$ are separable; thus, all their closed two-sided ideal are $\sigma$-unital).

Since $A$ is stable, it is enough to show that, for $a,b\in A$, if $(n+1)[a]\le n[b]$ then $[a]\le [b]$. Let $a,b\in A_+$ be such elements. Then it follows that
$$a\otimes 1_{\M_{n+1}}\precsim b\otimes 1_{\M_n}.$$
Let $\epsilon>0$. Then by (ii) of Lemma \ref{lem: Cuntz subequivalence} there exists $\delta>0$ such that
$$(a-\epsilon)_+\otimes 1_{\M_{n+1}}\precsim (b-\delta)_+\otimes 1_{\M_n}.$$
By the definition of Cuntz subequivalence, there exists $d\in \M_{n+1}(A)$ such that 
$$\|(a-\epsilon)_+\otimes 1_{\M_{n+1}}-d^*((b-\delta)_+\otimes 1_{\M_n})d\|<\epsilon.$$
Using now that $A$ is a local $\mathcal C$-algebra, there exist a C*-algebra $B\in \mathcal{C}$, an injective *-homomorphism $\phi\colon B\to A$, elements $a', b'\in B_+$ (here we are also using Lemma \ref{lem: positive elements}) and an element $d'\in \M_{n+1}(B)$ such that 
\begin{align}\label{eq: aphi}
\begin{aligned}
&\|a-\phi(a')\|<\epsilon, \quad \|b-\phi(b')\|<\delta, \\
&\|(\phi\otimes 1_{\M_{n+1}})\left((a'-\epsilon)_+\otimes 1_{\M_{n+1}}-(d')^*((b'-\delta)_+\otimes 1_{\M_n})d'\right)\|<\epsilon.
\end{aligned}
\end{align}
Since $\phi$ is injective, it is isometric. Hence, by the last inequality we have
$$\|(a'-\epsilon)_+\otimes 1_{\M_{n+1}}-(d')^*((b'-\delta)_+\otimes 1_{\M_n})d'\|<\epsilon,$$
in $\M_{n+1}(B)$.
It follows that 
$$(a'-2\epsilon)_+\otimes 1_{\M_{n+1}}\precsim(b'-\delta)_+\otimes 1_{\M_n}$$
by (i) of Lemma \ref{lem: Cuntz subequivalence}. Hence, $(n+1)[(a'-2\epsilon)_+]\le n[(b'-\delta)_+]$ in $\Cu(B)$. Using now that $\Cu(B)$ is almost unperforated we get $[(a'-2\epsilon)_+]\le[(b'-\delta)_+]$. By the first two inequalities in \eqref{eq: aphi} and (i) of Lemma \ref{lem: Cuntz subequivalence} we get
$$(a-3\epsilon)_+\precsim (\phi(a')-2\epsilon)_+, \quad (\phi(b')-\delta)_+\precsim b.$$
Therefore,
$$[(a-3\epsilon)_+]\le [(\phi(a')-2\epsilon)_+]=[\phi((a'-2\epsilon)_+)]\le[\phi((b'-\delta)_+)]=[(\phi(b')-\delta)_+)]\le[b]. $$
Using now that $[a]=\sup_{\epsilon>0}[(a-3\epsilon)_+]$ we get $[a]\le [b]$, as desired.

(xv) Let $\mathcal C$ be the class of simple C*-algebras with strict comparison of positive elements and let $A$ be a local $\mathcal C$-algebra. Then by Lemma \ref{lem: strict-almost} every C*-algebra in $\mathcal C$ has almost unperforated Cuntz semigroup. Note now that the proof of (xii) also applies  for the class $\mathcal C$ and the algebra $A$. The separability condition was only used to assure that the *-homomorphisms used in the local approximation of $A$ are injective, and this is obviously true for simple C*-algebras. Hence, it follows that $A$ has almost unperforated Cuntz semigroup. Therefore, $A$ has strict comparison of positive elements by Lemma \ref{lem: strict-almost}. 

(xvi)  Let $\mathcal C$ be the class of separable C*-algebras whose closed two-sided ideals are nuclear and satisfy the Universal Coefficient Therem (shortly, the UCT). Let $A$ be a local $\mathcal C$-algebra and let $I$ be a closed two-sided ideal of $A$. Then $I$ is a local $\mathcal C$-algebra by Lemma \ref{lem: ideals}. It follows now by \cite[Theorem 1.1]{DadarlatUCT} that $I$ satisfies the UCT. That $I$ is nuclear follows using Arveson's extension theorem and that $I$ can be locally approximated by nuclear C*-algebras.
\end{proof}

\begin{theorem}\label{thm: permanence}
The following classes of C*-algebras are preserved under taking crossed products with actions of finite groups with the  Rokhlin property:
\begin{itemize}
\item[(i)] Purely infinite C*-algebras;
\item[(ii)] C*-algebras of stable rank one;
\item[(iii)] C*-algebras of real rank zero;
\item[(iv)] C*-algebras of nuclear dimension at most $n$, where $n\in \mathbb{Z}_+$;
\item[(v)] Separable $\mathcal D$-stable C*-algebras, where $\mathcal D$ is a strongly self-absorbing C*-algebra;
\item[(vi)] Simple C*-algebras;
\item[(vii)] Simple C*-algebras that are stably isomorphic to direct limits of sequences of C*-algebras in a class $\mathcal S$, where $\mathcal S$ is a class of finitely generated semiprojective C*-algebras that is closed under taking tensor products by matrix algebras over $\C$;
\item[(viii)] Separable AF-algebras;
\item[(ix)] Separable simple C*-algebras that are stably isomorphic to AI-algebras;
\item[(x)] Separable simple C*-algebras that are stably isomorphic to AT-algebras;
\item[(xi)] C*-algebras that are stably isomorphic to sequential direct limits of one-dimensional NCCW-complexes;
\item[(xii)] Separable C*-algebras whose quotients are stably projectionless;
\item[(xiii)] Simple stably projectionless C*-algebras; 
\item[(xiv)] Separable C*-algebras with almost unperforated Cuntz semigroup;
\item[(xv)] Simple C*-algebras with strict comparison of positive elements.
\item[(xvi)] Separable C*-algebras whose closed two-sided ideals are nuclear and satisfy the Universal Coefficient Theorem.
\end{itemize}
\end{theorem}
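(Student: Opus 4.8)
The plan is to derive every item uniformly from Theorem \ref{thm: local approx} together with the matching closure statement in Proposition \ref{prop: local approximation}. Fix $A$ in one of the listed classes $\mathcal C$ and an action $\alpha\colon G\to\Aut(A)$ with the Rokhlin property. Theorem \ref{thm: local approx} says that for every finite $F\subseteq A\rtimes_\alpha G$ and every $\epsilon>0$ there are a positive element $a\in A$ and a $\ast$-homomorphism $\varphi\colon \M_{|G|}(\overline{aAa})\to A\rtimes_\alpha G$ with $\dist(x,\IIm(\varphi))<\epsilon$ for all $x\in F$. Hence $A\rtimes_\alpha G$ is a local $\mathcal C'$-algebra, where $\mathcal C'$ is the class of matrix amplifications $\M_{|G|}(\overline{aAa})$ of hereditary C*-subalgebras of members of $\mathcal C$. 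The whole argument then reduces to two points: first, that each building block $\M_{|G|}(\overline{aAa})$ again belongs to $\mathcal C$ (so that $A\rtimes_\alpha G$ is in fact a local $\mathcal C$-algebra); and second, that $\mathcal C$ is closed under local approximation. The second point is precisely Proposition \ref{prop: local approximation}, so everything comes down to the first.

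For the first point I would use that each class in the list is stable under two operations, passing to a hereditary C*-subalgebra and passing to $\M_n(-)$. For the ``local'' classes this is routine and well documented: purely infinite, real rank zero, nuclear dimension at most $n$, separable $\mathcal D$-stability, separable AF, ``all quotients stably projectionless'', almost unperforation of the Cuntz semigroup, and ``all closed two-sided ideals nuclear and satisfying the UCT'' are all inherited by hereditary C*-subalgebras and by matrix amplifications. Stable rank one is the only case worth singling out, where matrix amplification is handled by Rieffel's inequality $\mathrm{sr}(\M_n(B))\le\lceil(\mathrm{sr}(B)-1)/n\rceil+1$, which turns stable rank one into stable rank one. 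Feeding these building blocks into the corresponding parts of Proposition \ref{prop: local approximation} yields items (i)--(v), (viii), (xii), (xiii), (xiv) and (xvi) of the theorem; item (xiii) is especially clean, since hereditary subalgebras and matrix amplifications of a simple stably projectionless algebra are again simple and stably projectionless, so Proposition \ref{prop: local approximation}(viii) applies verbatim.

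The remaining items (vi), (vii), (ix), (x) and (xv) are where simplicity does the work. For these I would first note that the building blocks $\M_{|G|}(\overline{aAa})$ are simple, so $A\rtimes_\alpha G$ is a local simple algebra and hence simple by Proposition \ref{prop: local approximation}(vi) (alternatively one may quote the Corollary to the ideal-correspondence proposition). Simplicity also disposes of the awkward behaviour of a global invariant under hereditary subalgebras: by Brown's stable isomorphism theorem every nonzero hereditary C*-subalgebra $\overline{aAa}$ of the simple algebra $A$ satisfies $\overline{aAa}\otimes\K\cong A\otimes\K$, so $\M_{|G|}(\overline{aAa})$ is stably isomorphic to $A$ and therefore lies in $\mathcal C$ whenever membership is a stable-isomorphism invariant. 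This covers ``stably isomorphic to a direct limit of algebras in $\mathcal S$'' (item (vii), via Proposition \ref{prop: local approximation}(ix)) and the AI and AT cases (items (ix) and (x), via parts (xi) and (xii)); item (xv) instead uses that strict comparison is equivalent to almost unperforation (Lemma \ref{lem: strict-almost}) and that almost unperforation passes to hereditary subalgebras and matrix amplifications, followed by Proposition \ref{prop: local approximation}(xv).

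The one genuinely delicate item is (xi), where no simplicity is available, and this is where I expect the main obstacle. After stabilizing, the building blocks become $\M_{|G|}(\overline{aAa})\otimes\K\cong\overline{aAa}\otimes\K$, a hereditary C*-subalgebra of $A\otimes\K$, which by hypothesis is a sequential direct limit of one-dimensional NCCW-complexes. To conclude I must know that a hereditary C*-subalgebra of such a limit is again a sequential direct limit of one-dimensional NCCW-complexes; this hereditary-subalgebra permanence is not formal and has to be imported from the classification theory of one-dimensional NCCW limits. Granting it, $(A\rtimes_\alpha G)\otimes\K$ is locally approximated by such limits, and Proposition \ref{prop: local approximation}(xiii) promotes this to an honest sequential direct limit, giving (xi). Thus the only hard input across the whole theorem is the hereditary-subalgebra permanence of the defining class in the one structural case lacking simplicity; everywhere else the hereditary-subalgebra and matrix stability are either elementary or replaced by Brown's theorem.
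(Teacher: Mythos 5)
Your overall strategy is exactly the paper's: Theorem \ref{thm: local approx} exhibits $A\rtimes_\alpha G$ as a local algebra over the building blocks $\M_{|G|}(\overline{aAa})$, Proposition \ref{prop: local approximation} closes each class under local approximation, and the whole burden then falls on showing that each class is closed under matrix amplification and under passing to hereditary C*-subalgebras (the paper reduces to \emph{$\sigma$-unital} hereditary subalgebras, which is all that is needed since $\overline{aAa}$ has a strictly positive element; keep this in mind because Brown's theorem requires $\sigma$-unitality, automatic in items (vii), (ix), (x) as those algebras are stably isomorphic to separable algebras and hence separable). Your handling of the simple items via Brown's stable isomorphism theorem, of (xv) via Lemma \ref{lem: strict-almost}, and of the remaining ``local'' items is the same as in the paper.

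The one genuine gap is item (xi), and you flag it yourself: you \emph{grant} that a hereditary C*-subalgebra of an algebra stably isomorphic to a sequential direct limit of 1-dimensional NCCW-complexes again lies in that class, and you propose to import this from ``the classification theory of one-dimensional NCCW limits.'' That import is not actually available: the classification in \cite{Robert} applies only to inductive limits of 1-dimensional NCCW-complexes with trivial $\mathrm K_1$ of the building blocks (so it does not even cover the whole class in (xi), e.g.\ AT-algebras), and even within its range it does not directly yield hereditary permanence. The paper closes this gap with an ideal-theoretic argument requiring no classification: if $B$ is hereditary in $A$ and $A\otimes\K$ is a sequential limit of 1-dimensional NCCW-complexes, then $B\otimes\K$ is isomorphic to an \emph{ideal} of $A\otimes\K$ by \cite[Theorem 2.8]{Brown}; an ideal of a sequential direct limit is a direct limit of ideals of the building blocks; each ideal of a 1-dimensional NCCW-complex is itself a sequential direct limit of 1-dimensional NCCW-complexes by \cite[Lemma 3.11]{Santiago}; hence $B\otimes\K$ is locally approximated by 1-dimensional NCCW-complexes, and Proposition \ref{prop: local approximation}(xiii) upgrades this to an honest sequential direct limit. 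With this argument substituted for your ``granted'' step, your proof coincides with the paper's.
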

\begin{proof} 
Let $A$ be a C*-algebra and let $\alpha\colon G\to \Aut(A)$ be an action of a finite group $G$ on $A$ with the Rokhlin property. Let $\mathcal S$ be a class of C*-algebras such that for $B\in \mathcal S$, $n\in\N$, and $b\in B_+$ one has $\M_n(A)\in\mathcal S$ and $\overline{bBb}\in\mathcal S$. Then by Theorem \ref{thm: local approx}, if $A$ is a local $\mathcal S$-algebra then so is the crossed product C*-algebra $A\rtimes_\alpha G$. Given that all the classes of C*-algebras described in the theorem are closed under local approximation by Proposition \ref{prop: local approximation} and under tensor product by matrix algebras, it is sufficient to show that they are closed under taking $\sigma$-unital hereditary subalgebras. This clearly holds for the classes of C*-algebras in (vi), (vii), (ix), (x), (xii), and (xiii) (here we are using \cite[Theorem 2.8]{Brown}). For the classes in (i), (iii), (iv), (v), (viii), and (xvi) this follows by \cite[Proposition 3.15(iii)]{Kirchberg-Rordam}, \cite[Corollary 2.8]{Brown-Pedersen}, \cite[Propostion 2.5]{W-Z}, \cite[Corollary 3.1]{Toms-Winter}, \cite[Theorem 3.1]{Elliott-Ideals}, and \cite[Propositions 2.1.2(ii) and 2.4.7(iii)]{RordamBook}, respectively. For the class of C*-algebras in (xiv) this follows using \cite[Theorem 2.8]{Brown} and \cite[Proposition 2.3]{Hishberg-Rordam-Winter}. For the class of C*-algebras in (xv) this holds by (xiv) and \cite[Lemma 6.1]{Tikuisis-Toms}. Finally, for the class of algebras in (xi) this follows by the following argument. Let $B$ be a hereditary C*-subalgebra of a C*-algebra $A$ that is stably isomorphic to a direct limit of a sequence of 1-dimensional NCCW-complexes. Then $B\otimes \K$ is isomorphic to an ideal of $A\otimes \K$ by \cite[Theorem 2.8]{Brown}. It follows that $B\otimes \K$ is the direct limit of a sequence of ideals of 1-dimensional NCCW-complexes. Each of these ideals is a direct limit of a sequence of 1-dimensional NCCW-complexes by \cite[Lemma 3.11]{Santiago}. Hence, $B\otimes \K$ can be locally approximated by 1-dimensional NCCW-complexes. This implies by Proposition \ref{prop: local approximation} that $B\otimes \K$ is a direct limit of a sequence of such algebras. Therefore, $B$ belongs to the class of C*-algebras in (xi). 
\end{proof}

\begin{bibdiv}
\begin{biblist}


\bib{Antoine-Perera-Santiago}{article}{
   author={Antoine, R.},
   author={Perera, F.},
   author={Santiago, L.},
   title={Pullbacks, $C(X)$-algebras, and their Cuntz semigroup},
   journal={J. Funct. Anal.},
   volume={260},
   date={2011},
   number={10},
   pages={2844--2880},
}

\bib{Bratelli}{article}{
   author={Bratteli, O.},
   title={Inductive limits of finite dimensional $C\sp{\ast} $-algebras},
   journal={Trans. Amer. Math. Soc.},
   volume={171},
   date={1972},
   pages={195--234},
}

\bib{Brown}{article}{
   author={Brown, L. G.},
   title={Stable isomorphism of hereditary subalgebras of $C\sp*$-algebras},
   journal={Pacific J. Math.},
   volume={71},
   date={1977},
   number={2},
   pages={335--348},
}

\bib{Brown-Pedersen}{article}{
   author={Brown, L. G.},
   author={Pedersen, G. K.},
   title={$C\sp *$-algebras of real rank zero},
   journal={J. Funct. Anal.},
   volume={99},
   date={1991},
   number={1},
   pages={131--149},
}

\bib{B-R-T-T-W}{article}{
   author={Blackadar, B.},
   author={Robert, L.},
   author={Tikuisis, A. P.},
   author={Toms, A. S.},
   author={Winter, W.},
   title={An algebraic approach to the radius of comparison},
   journal={Trans. Amer. Math. Soc.},
   volume={364},
   date={2012},
   number={7},
   pages={3657--3674},
}

\bib{C-R-S}{article}{
   author={Ciuperca, A.},
   author={Robert, L.},
   author={Santiago, L.},
   title={The Cuntz semigroup of ideals and quotients and a generalized
   Kasparov stabilization theorem},
   journal={J. Operator Theory},
   volume={64},
   date={2010},
   number={1},
   pages={155--169},
}

\bib{C-E-S}{article}{
   author={Ciuperca, A.},
   author={Elliott, G. A.},
   author={Santiago, L.},
   title={On inductive limits of type-I $C\sp *$-algebras with
   one-dimensional spectrum},
   journal={Int. Math. Res. Not. IMRN},
   date={2011},
   number={11},
   pages={2577--2615},
}

\bib{Coward-Elliott-Ivanescu}{article}{
   author={Coward, K. T.},
   author={Elliott, G. A.},
   author={Ivanescu, C.},
   title={The Cuntz semigroup as an invariant for C*-algebras},
   journal={J. Reine Angew. Math.},
   volume={623},
   date={2008},
   pages={161--193},
}

\bib{DadarlatUCT}{article}{
   author={Dadarlat, M.},
   title={Some remarks on the universal coefficient theorem in $KK$-theory},
   conference={
      title={Operator algebras and mathematical physics},
      address={Constan\c ta},
      date={2001},
   },
   book={
      publisher={Theta, Bucharest},
   },
   date={2003},
   pages={65--74},
}

\bib{E-L-P}{article}{
   author={Eilers, S.},
   author={Loring, T. A.},
   author={Pedersen, G. K.},
   title={Stability of anticommutation relations: an application of
   noncommutative CW complexes},
   journal={J. Reine Angew. Math.},
   volume={499},
   date={1998},
   pages={101--143},
}

\bib{Elliott-Ideals}{article}{
   author={Elliott, G. A.},
   title={Automorphisms determined by multipliers on ideals of a
   $C\sp*$-algebra},
   journal={J. Functional Analysis},
   volume={23},
   date={1976},
   number={1},
   pages={1--10},
}

\bib{Fack-Marechal}{article}{
   author={Fack, Th.},
   author={Mar{\'e}chal, O.},
   title={Sur la classification des sym\'etries des $C\sp{\ast} $-algebres
   UHF},
   language={French},
   journal={Canad. J. Math.},
   volume={31},
   date={1979},
   number={3},
   pages={496--523},
}

\bib{Gardella-Santiago}{article}{
   author={Gardella, E.},
   author={Santiago, L.},
   title={Classification of actions of finite groups on C*-algebras},
   journal={preprint},
   volume={},
   date={2014},
   number={},
   pages={},
}

\bib{G-L-P}{article}{
   author={Gootman, E. C.},
   author={Lazar, A. J.},
   author={Peligrad, C.},
   title={Spectra for compact group actions},
   journal={J. Operator Theory},
   volume={31},
   date={1994},
   number={2},
   pages={381--399},
}

\bib{Herman-Jones}{article}{
   author={Herman, R. H.},
   author={Jones, V. F. R.},
   title={Models of finite group actions},
   journal={Math. Scand.},
   volume={52},
   date={1983},
   number={2},
   pages={312--320},
}
\bib{Hishberg-Rordam-Winter}{article}{
   author={Hirshberg, I.},
   author={R{\o}rdam, M.},
   author={Winter, W.},
   title={$\scr C\sb 0(X)$-algebras, stability and strongly self-absorbing
   $C\sp *$-algebras},
   journal={Math. Ann.},
   volume={339},
   date={2007},
   number={3},
   pages={695--732},
}

\bib{Izumi-I}{article}{
   author={Izumi, M.},
   title={Finite group actions on C*-algebras with the Rohlin
   property. I},
   journal={Duke Math. J.},
   volume={122},
   date={2004},
   number={2},
   pages={233--280},
}

\bib{Izumi-II}{article}{
   author={Izumi, M.},
   title={Finite group actions on C*-algebras with the Rohlin
   property. II},
   journal={Adv. Math.},
   volume={184},
   date={2004},
   number={1},
   pages={119--160},
}

\bib{Kirchberg-Rordam}{article}{
  author={Kirchberg, E.},
   author={R{\o}rdam, M.},
   title={Infinite non-simple C*-algebras: absorbing the Cuntz
   algebras $\scr O\sb \infty$},
   journal={Adv. Math.},
  volume={167},
  date={2002},
   number={2},
   pages={195--264},
}

\bib{Kishimoto}{article}{
   author={Kishimoto, A.},
   title={On the fixed point algebra of a UHF algebra under a periodic
   automorphism of product type},
   journal={Publ. Res. Inst. Math. Sci.},
   volume={13},
   date={1977/78},
   number={3},
   pages={777--791},
}

\bib{Loring}{book}{
   author={Loring, T. A.},
   title={Lifting solutions to perturbing problems in $C\sp *$-algebras},
   series={Fields Institute Monographs},
   volume={8},
   publisher={American Mathematical Society},
   place={Providence, RI},
   date={1997},
   pages={x+165},
}

\bib{Nawata}{article}{
   author={Nawata, N.},
   title={Finite group actions on certain stably projectionless C*-algebras with the Rohlin property},
   journal={Preprint, arXiv:1308.0429},
   volume={},
   date={2013},
}

\bib{Osaka-Phillips}{article}{
   author={Osaka, H.},
   author={Phillips, N. C.},
   title={Crossed products by finite group actions with the Rokhlin
   property},
   journal={Math. Z.},
   volume={270},
   date={2012},
   number={1-2},
   pages={19--42},
}

\bib{PedersenBook}{book}{
   author={Pedersen, G. K.},
   title={$C\sp{\ast} $-algebras and their automorphism groups},
   series={London Mathematical Society Monographs},
   volume={14},
   publisher={Academic Press Inc. [Harcourt Brace Jovanovich Publishers]},
   place={London},
   date={1979},
   pages={ix+416},
}

\bib{Pedersen}{article}{
   author={Pedersen, G. K.},
   title={Unitary extensions and polar decompositions in a $C\sp
   \ast$-algebra},
   journal={J. Operator Theory},
   volume={17},
   date={1987},
   number={2},
   pages={357--364},
}

\bib{Phillips-Equivariant-K-theory-and-freeness-of-actions}{book}{
   author={Phillips, N. C.},
   title={Equivariant $K$-theory and freeness of group actions on $C\sp
   *$-algebras},
   series={Lecture Notes in Mathematics},
   volume={1274},
   publisher={Springer-Verlag},
   place={Berlin},
   date={1987},
}

\bib{Phillips-freeness}{article}{
   author={Phillips, N. C.},
   title={Freeness of actions of finite groups on $C\sp *$-algebras},
   conference={
      title={Operator structures and dynamical systems},
   },
   book={
      series={Contemp. Math.},
      volume={503},
      publisher={Amer. Math. Soc.},
      place={Providence, RI},
   },
   date={2009},
   pages={217--257},
}

\bib{Phillips-TracialR}{article}{
   author={Phillips, N. C.},
   title={The tracial Rokhlin property for actions of finite groups on $C\sp
   \ast$-algebras},
   journal={Amer. J. Math.},
   volume={133},
   date={2011},
   number={3},
   pages={581--636},
}

\bib{Phillips-generic}{article}{
   author={Phillips, N. C.},
   title={The tracial Rokhlin property is generic},
   journal={Preprint, arXiv:1209.3859},
   volume={},
   date={2012},
}

\bib{RieffelMoritaeq}{article}{
   author={Rieffel, M. A.},
   title={Morita equivalence for operator algebras},
   conference={
      title={Operator algebras and applications, Part I},
      address={Kingston, Ont.},
      date={1980},
   },
   book={
      series={Proc. Sympos. Pure Math.},
      volume={38},
      publisher={Amer. Math. Soc.},
      place={Providence, R.I.},
   },
   date={1982},
   pages={285--298},
}

\bib{Rieffel}{article}{
   author={Rieffel, M. A.},
   title={Dimension and stable rank in the $K$-theory of
   $C\sp{\ast}$-algebras},
   journal={Proc. London Math. Soc. (3)},
   volume={46},
   date={1983},
   number={2},
   pages={301--333},
}

\bib{Robert-Santiago}{article}{
   author={Robert, L.},
   author={Santiago, L.},
   title={Classification of $C\sp \ast$-homomorphisms from $C\sb 0(0,1]$ to
   a $C\sp \ast$-algebra},
   journal={J. Funct. Anal.},
   volume={258},
   date={2010},
   number={3},
   pages={869--892},
}

\bib{Robert}{article}{
   author={Robert, L.},
   title={Classification of inductive limits of 1-dimensional NCCW
   complexes},
   journal={Adv. Math.},
   volume={231},
   date={2012},
   number={5},
   pages={2802--2836},
}

\bib{RordamBook}{book}{
   author={R{\o}rdam, M.},
   author={Larsen, F.},
   author={Laustsen, N.},
   title={An introduction to $K$-theory for $C\sp *$-algebras},
   series={London Mathematical Society Student Texts},
   volume={49},
   publisher={Cambridge University Press},
   place={Cambridge},
   date={2000},
   pages={xii+242},
}

\bib{RordamBook}{article}{
   author={R{\o}rdam, M.},
   title={Classification of nuclear, simple $C\sp *$-algebras},
   conference={
      title={Classification of nuclear $C\sp *$-algebras. Entropy in
      operator algebras},
   },
   book={
      series={Encyclopaedia Math. Sci.},
      volume={126},
      publisher={Springer},
      place={Berlin},
   },
   date={2002},
   pages={1--145},
}

\bib{Rordam}{article}{
   author={R{\o}rdam, M.},
   title={Stable $C\sp *$-algebras},
   conference={
      title={Operator algebras and applications},
   },
   book={
      series={Adv. Stud. Pure Math.},
      volume={38},
      publisher={Math. Soc. Japan},
      place={Tokyo},
   },
   date={2004},
   pages={177--199},
}

\bib{Santiago}{article}{
   author={Santiago, L.},
   title={Reduction of the dimension of nuclear C*-algebras},
   journal={arXiv:1211.7159},
   volume={},
   date={2012},
}

\bib{Sierakowski}{article}{
   author={Sierakowski, A.},
   title={The ideal structure of reduced crossed products},
   journal={M\"unster J. Math.},
   volume={3},
   date={2010},
   pages={237--261},
}

\bib{Tikuisis-Toms}{article}{
   author={Toms, A. S.},
   author={Tikuisis, A.},
   title={On the structure of the Cuntz semigroup in (possibly) nonunital C*-algebras},
   journal={arXiv:1210.2235},
   volume={},
   date={2012},
}

\bib{Toms-Winter}{article}{
   author={Toms, A. S.},
   author={Winter, W.},
   title={Strongly self-absorbing C*-algebras},
   journal={Trans. Amer. Math. Soc.},
   volume={359},
   date={2007},
   number={8},
   pages={3999--4029},
}

\bib{W-Z}{article}{
   author={Winter, W.},
   author={Zacharias, J.},
   title={The nuclear dimension of $C\sp \ast$-algebras},
   journal={Adv. Math.},
   volume={224},
   date={2010},
   number={2},
   pages={461--498},
}

\bib{WinterSSA}{article}{
   author={Winter, W.},
   title={Strongly self-absorbing $C\sp *$-algebras are $\scr Z$-stable},
   journal={J. Noncommut. Geom.},
   volume={5},
   date={2011},
   number={2},
   pages={253--264},
}

\end{biblist}
\end{bibdiv}

\end{document}